\begin{document}

\allowdisplaybreaks
\hyphenation{non-archi-me-dean}


\newtheorem{theorem}{Theorem}
\newtheorem{lemma}[theorem]{Lemma}
\newtheorem{sublemma}[theorem]{Sublemma}
\newtheorem{conjecture}[theorem]{Conjecture}
\newtheorem{proposition}[theorem]{Proposition}
\newtheorem{propositionnonum}[theorem]{Proposition}
\newtheorem{corollary}[theorem]{Corollary}
\newtheorem{claim}[theorem]{Claim}

\theoremstyle{definition}
\newtheorem*{definition}{Definition}
\newtheorem{remark}[theorem]{Remark}
\newtheorem{example}[theorem]{Example}
\newtheorem{question}[theorem]{Question}

\theoremstyle{remark}
\newtheorem*{acknowledgement}{Acknowledgements}


\newenvironment{notation}[0]{%
  \begin{list}%
    {}%
    {\setlength{\itemindent}{0pt}
     \setlength{\labelwidth}{4\parindent}
     \setlength{\labelsep}{\parindent}
     \setlength{\leftmargin}{5\parindent}
     \setlength{\itemsep}{0pt}
     }%
   }%
  {\end{list}}

\newenvironment{parts}[0]{%
  \begin{list}{}%
    {\setlength{\itemindent}{0pt}
     \setlength{\labelwidth}{1.5\parindent}
     \setlength{\labelsep}{.5\parindent}
     \setlength{\leftmargin}{2\parindent}
     \setlength{\itemsep}{0pt}
     }%
   }%
  {\end{list}}
\newcommand{\Part}[1]{\item[\upshape#1]}

\renewcommand{\a}{\alpha}
\newcommand{\aupper}{\overline{\alpha}}
\newcommand{\alower}{\underline{\alpha}}
\renewcommand{\b}{\beta}
\newcommand{\bupper}{\overline{\beta}}
\newcommand{\blower}{\underline{\beta}}
\newcommand{\bfbeta}{{\boldsymbol{\beta}}}
\newcommand{\g}{\gamma}
\renewcommand{\d}{\delta}
\newcommand{\e}{\epsilon}
\newcommand{\bfepsilon}{\boldsymbol{\epsilon}}
\newcommand{\f}{\varphi}
\newcommand{\bfphi}{{\boldsymbol{\f}}}
\renewcommand{\k}{\kappa}
\renewcommand{\l}{\lambda}
\newcommand{\bfl}{{\boldsymbol{\lambda}}}
\newcommand{\lhat}{\hat\lambda}
\newcommand{\lplus}{\lambda^{\scriptscriptstyle+}}
\newcommand{\m}{\mu}
\newcommand{\bfmu}{{\boldsymbol{\mu}}}
\renewcommand{\o}{\omega}
\renewcommand{\r}{\rho}
\newcommand{\rbar}{{\bar\rho}}
\newcommand{\s}{\sigma}
\newcommand{\sbar}{{\bar\sigma}}

\renewcommand{\t}{\tau}
\newcommand{\z}{\zeta}

\newcommand{\D}{\Delta}
\newcommand{\G}{\Gamma}
\newcommand{\F}{\Phi}
\renewcommand{\L}{\Lambda}
\newcommand{\ga}{{\mathfrak{a}}}
\newcommand{\gb}{{\mathfrak{b}}}
\newcommand{\gn}{{\mathfrak{n}}}
\newcommand{\gp}{{\mathfrak{p}}}
\newcommand{\gP}{{\mathfrak{P}}}
\newcommand{\gq}{{\mathfrak{q}}}

\newcommand{\Abar}{{\bar A}}
\newcommand{\Ebar}{{\bar E}}
\newcommand{\Kbar}{{\bar K}}
\newcommand{\Pbar}{{\bar P}}
\newcommand{\Sbar}{{\bar S}}
\newcommand{\Tbar}{{\bar T}}
\newcommand{\abar}{{\bar a}}
\newcommand{\bbar}{{\bar b}}
\newcommand{\ybar}{{\bar y}}
\newcommand{\phibar}{{\bar\f}}
\newcommand{\mubar}{{\bar\mu}}

\newcommand{\ftilde}{{\tilde f}}

\newcommand{\Acal}{{\mathcal A}}
\newcommand{\Bcal}{{\mathcal B}}
\newcommand{\Ccal}{{\mathcal C}}
\newcommand{\Dcal}{{\mathcal D}}
\newcommand{\Ecal}{{\mathcal E}}
\newcommand{\Fcal}{{\mathcal F}}
\newcommand{\Gcal}{{\mathcal G}}
\newcommand{\Hcal}{{\mathcal H}}
\newcommand{\Ical}{{\mathcal I}}
\newcommand{\Jcal}{{\mathcal J}}
\newcommand{\Kcal}{{\mathcal K}}
\newcommand{\Lcal}{{\mathcal L}}
\newcommand{\Mcal}{{\mathcal M}}
\newcommand{\Ncal}{{\mathcal N}}
\newcommand{\Ocal}{{\mathcal O}}
\newcommand{\Pcal}{{\mathcal P}}
\newcommand{\Qcal}{{\mathcal Q}}
\newcommand{\Rcal}{{\mathcal R}}
\newcommand{\Scal}{{\mathcal S}}
\newcommand{\Tcal}{{\mathcal T}}
\newcommand{\Ucal}{{\mathcal U}}
\newcommand{\Vcal}{{\mathcal V}}
\newcommand{\Wcal}{{\mathcal W}}
\newcommand{\Xcal}{{\mathcal X}}
\newcommand{\Ycal}{{\mathcal Y}}
\newcommand{\Zcal}{{\mathcal Z}}

\renewcommand{\AA}{\mathbb{A}}
\newcommand{\BB}{\mathbb{B}}
\newcommand{\CC}{\mathbb{C}}
\newcommand{\FF}{\mathbb{F}}
\newcommand{\GG}{\mathbb{G}}
\newcommand{\HH}{\mathbb{H}}
\newcommand{\KK}{\mathbb{K}}
\newcommand{\NN}{\mathbb{N}}
\newcommand{\PP}{\mathbb{P}}
\newcommand{\QQ}{\mathbb{Q}}
\newcommand{\RR}{\mathbb{R}}
\newcommand{\ZZ}{\mathbb{Z}}

\newcommand{\bfa}{{\boldsymbol a}}
\newcommand{\bfb}{{\boldsymbol b}}
\newcommand{\bfc}{{\boldsymbol c}}
\newcommand{\bfe}{{\boldsymbol e}}
\newcommand{\bff}{{\boldsymbol f}}
\newcommand{\bfg}{{\boldsymbol g}}
\newcommand{\bfghat}{{\boldsymbol{\hat g}}}
\newcommand{\bfh}{{\boldsymbol h}}
\newcommand{\bfhhat}{{\boldsymbol{\hat h}}}
\newcommand{\bfp}{{\boldsymbol p}}
\newcommand{\bfm}{{\boldsymbol m}}
\newcommand{\bfr}{{\boldsymbol r}}
\newcommand{\bfs}{{\boldsymbol \sigma}}
\newcommand{\bft}{{\boldsymbol t}}
\newcommand{\bfu}{{\boldsymbol u}}
\newcommand{\bfv}{{\boldsymbol v}}
\newcommand{\bfw}{{\boldsymbol w}}
\newcommand{\bfx}{{\boldsymbol x}}
\newcommand{\bfy}{{\boldsymbol y}}
\newcommand{\bfz}{{\boldsymbol z}}
\newcommand{\bfA}{{\boldsymbol A}}
\newcommand{\bfB}{{\boldsymbol B}}
\newcommand{\bfC}{{\boldsymbol C}}
\newcommand{\bfD}{{\boldsymbol D}}
\newcommand{\bfE}{{\boldsymbol E}}
\newcommand{\bfF}{{\boldsymbol F}}
\newcommand{\bfG}{{\boldsymbol G}}
\newcommand{\bfH}{{\boldsymbol H}}
\newcommand{\bfI}{{\boldsymbol I}}
\newcommand{\bfM}{{\boldsymbol M}}
\newcommand{\bfU}{{\boldsymbol U}}
\newcommand{\bfzero}{{\boldsymbol{0}}}

\newcommand{\Amp}{\operatorname{Amp}}
\newcommand{\Aut}{\operatorname{Aut}}
\newcommand{\bad}{\textup{bad}}
\newcommand{\ds}{\displaystyle}
\newcommand{\Disc}{\operatorname{Disc}}
\newcommand{\dist}{\Delta}  
\newcommand{\Div}{\operatorname{Div}}
\renewcommand{\div}{{\textup{div}}}
\newcommand{\ehat}{{\hat\eta}}
\newcommand{\End}{\operatorname{End}}
\newcommand{\Eff}{\operatorname{Eff}}
\newcommand{\Family}{{\mathcal A}}  
\newcommand{\Fatou}{{\mathcal F}}
\newcommand{\Fbar}{{\bar{F}}}
\newcommand{\Fix}{\operatorname{Fix}}
\newcommand{\Gal}{\operatorname{Gal}}
\newcommand{\ghat}{{\hat g}}
\newcommand{\GL}{\operatorname{GL}}
\newcommand{\good}{\textup{good}}
\newcommand{\Hom}{\operatorname{Hom}}
\newcommand{\Index}{\operatorname{Index}}
\newcommand{\Image}{\operatorname{Image}}
\renewcommand{\Im}{\operatorname{Im}} 
\newcommand{\ii}{\sqrt{-1}} 
\newcommand{\interior}{\operatorname{int}}
\newcommand{\Julia}{{\mathcal J}}
\newcommand{\liftable}{{\textup{liftable}}}
\newcommand{\herm}{{\textup{herm}}}
\newcommand{\hhat}{{\hat h}}
\newcommand{\hhatlower}{\underline{\hat h}}
\newcommand{\hhatplus}{{\hat h^{\scriptscriptstyle+}}}
\newcommand{\hhatminus}{{\hat h^{\scriptscriptstyle-}}}
\newcommand{\hhatpm}{{\hat h^{\scriptscriptstyle\pm}}}
\newcommand{\hplus}{h^{\scriptscriptstyle+}}
\newcommand{\Ker}{{\operatorname{ker}}}
\newcommand{\Lift}{\operatorname{Lift}}
\newcommand{\MOD}[1]{~(\textup{mod}~#1)}
\newcommand{\Nef}{\operatorname{Nef}}
\newcommand{\Norm}{{\operatorname{\mathsf{N}}}}
\newcommand{\Nm}{\operatorname{Nm}}
\newcommand{\notdivide}{\nmid}
\newcommand{\normalsubgroup}{\triangleleft}
\newcommand{\Num}{{\operatorname{Num}}}
\newcommand{\NS}{{\operatorname{NS}}}
\newcommand{\odd}{{\operatorname{odd}}}
\newcommand{\onto}{\twoheadrightarrow}
\newcommand{\ord}{\operatorname{ord}}
\newcommand{\Orbit}{{\mathcal O}}
\newcommand{\PGL}{\operatorname{PGL}}
\newcommand{\Pic}{\operatorname{Pic}}
\newcommand{\Prob}{\operatorname{Prob}}
\newcommand{\psef}{\textup{psef}}
\newcommand{\Qbar}{{\bar{\QQ}}}
\newcommand{\qhat}{{\hat q}}
\newcommand{\rank}{\operatorname{rank}}
\renewcommand{\Re}{{\operatorname{Re}}} 
\newcommand{\Res}{{\operatorname{Res}}}
\newcommand{\Resultant}{\operatorname{Res}}
\newcommand{\rest}[2]{\left.{#1}\right\vert_{{#2}}}  
\renewcommand{\setminus}{\smallsetminus}
\newcommand{\Span}{\operatorname{Span}}
\newcommand{\Spec}{{\operatorname{Spec}}}
\newcommand{\Supp}{\operatorname{Supp}}
\newcommand{\sym}{{\textup{sym}}}
\newcommand{\Tate}[1]{\langle#1\rangle_E}
\newcommand{\TateA}[1]{\langle#1\rangle_A}
\newcommand{\TateX}[1]{\langle#1\rangle_X}
\newcommand{\tors}{{\textup{tors}}}
\newcommand{\Trace}{\operatorname{Trace}}
\newcommand{\Tr}{\operatorname{Tr}}
\newcommand{\UHP}{{\mathfrak{h}}}    
\newcommand{\Xhat}{{\hat X}}

\newcommand{\longhookrightarrow}{\lhook\joinrel\longrightarrow}
\newcommand{\longonto}{\relbar\joinrel\twoheadrightarrow}



\title[Dynamical Canonical Heights for Jordan Blocks\dots]
{Dynamical canonical heights for Jordan blocks,
arithmetic degrees of orbits, 
and nef canonical heights on abelian varieties}
\date{\today}

\author[Shu Kawaguchi and Joseph H. Silverman]
  {Shu Kawaguchi and Joseph H. Silverman}
\email{kawaguch@math.kyoto-u.ac.jp, jhs@math.brown.edu}
\address{Department of Mathematics, Faculty of Science, Kyoto University, 
Kyoto, 606-8502, Japan}
\address{Mathematics Department, Box 1917
         Brown University, Providence, RI 02912 USA}
\subjclass[2010]{Primary: 37P15; Secondary: 37P05, 37P30, 37P55, 11G50}
\keywords{canonical height; arithmetic degree; nef divisor; abelian variety}
\thanks{The first author's research supported by JSPS
grant-in-aid for young scientists (B) 24740015.
The second author's research supported by NSF DMS-0854755
and Simons Collaboration Grant \#241309.}

\begin{abstract}
Let $f:X\to X$ be an endomorphism of a normal projective variety
defined over a global field~$K$, and
let~$D_0,D_1,\ldots\in\Div(X)\otimes\CC$ form a Jordan block with
eigenvalue~$\l$ for the action of~$f^*$ on~$\Pic(X)\otimes\CC$.  We
construct associated canonical height functions~$\hhat_{D_k}$
satisfying Jordan transformation formulas $\hhat_{D_k}\circ f = \l
\hhat_{D_k} + \hhat_{D_{k-1}}$. As an application, we prove that for
every $x\in X(\Kbar)$, the arithmetic
degree~$\a_f(x)=\lim_{n\to\infty}h_X(f^n(x))^{1/n}$ of~$x$ exists, is
an algebraic integer, and takes on only finitely many values as~$x$
varies over~$X(\Kbar)$. Further, if $X$ is an abelian variety defined
over a number field and~$D$ is a nonzero nef divisor, we characterize
points satisfying $\hhat_D(x)=0$, and we use this characterization to
prove that if the $f$-orbit of $x$ is Zariski dense in~$X$,
then~$\a_f(x)$ is equal to the dynamical degree of~$f$.
\end{abstract}


\maketitle



\section*{Introduction}
\label{section:intro}
Let $K$ be a global field (as defined in
Section~\ref{section:defnotation}), let~$X/K$ be a normal projective
variety, let~$h_X:X(\Kbar)\to\RR$ be a Weil height associated to an
ample divisor, and let $\hplus_X=\max\{h_X,1\}$.  We study arithmetic
properties of the orbit of a point~$x\in X(\Kbar)$ under iteration of
a $K$-endomorphism $f:X\to X$.  We
recall~\cite{kawsilv:arithdegledyndeg,arxiv1111.5664} that the
\emph{$f$-arithmetic degree of~$x$} is the quantity
\begin{equation}
  \label{eqn:arithdegdef}
  \a_f(x) = \lim_{n\to\infty} \hplus_X\bigl(f^n(x)\bigr)^{1/n},
\end{equation}
if the limit exists.  The arithmetic degree of~$x$ provides a rough,
but useful, measure of the arithmetic complexity of the $f$-orbit
of~$x$. The following conjecture appears
in~\cite{kawsilv:arithdegledyndeg}.\footnote{More precisely, there 
is a version of Conjecture~\ref{conjecture:afxconj} 
in~\cite{kawsilv:arithdegledyndeg} that applies to dominant
rational maps $f:X\dashrightarrow X$. But when~$f$ is a morphism, we 
can drop the requirement that~$f$ be dominant.}

\begin{conjecture}
\label{conjecture:afxconj}
Let $f:X\to X$ be a $K$-endomorphism as described above.
\begin{parts}
\Part{(a)}
For every~$x\in X(\Kbar)$, the limit~\eqref{eqn:arithdegdef}
defining~$\a_f(x)$ converges.
\Part{(b)}
$\a_f(x)$ is an algebraic integer.
\Part{(c)}
The collection of arithmetic degrees
\[
 \bigl\{\a_f(x): x\in X(\Kbar)\bigr\}
\]
is a finite set.
\Part{(d)}
If the forward $f$-orbit of~$x$ is Zariski dense in~$X$,
then~$\a_f(x)$ is equal to~$\d_f$, the dynamical degree 
of~$f$.\footnote{Since $f$ is a morphism, its dynamical degree~$\d_f$
is simply the spectral radius of the map~$f^*$ that~$f$ induces
on~$\NS(X)\otimes\RR$.}
\end{parts}
\end{conjecture}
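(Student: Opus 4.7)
The plan is to decompose the action of $f^*$ on $\Pic(X)\otimes\CC$ into Jordan blocks, attach to each Jordan basis element a canonical height obeying the prescribed transformation law, read off the precise growth rate of $h_X\circ f^n$ from these heights, and deduce parts~(a), (b), (c). For part~(d), I would combine the nef canonical heights with N\'eron--Tate theory on an abelian variety to characterize the vanishing locus of $\hhat_D$ and then argue that a Zariski-dense orbit cannot remain in such a locus unless the dominant eigenvalue is attained.

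Fix a single Jordan block $D_0,D_1,\ldots,D_d$ with eigenvalue $\lambda$, normalized so that $f^*D_k\sim\lambda D_k+D_{k-1}$ with $D_{-1}=0$; the associated Weil heights then satisfy $h_{D_k}\circ f=\lambda h_{D_k}+h_{D_{k-1}}+O(1)$. When $|\lambda|>1$ I would construct $\hhat_{D_k}$ inductively on $k$. The bottom vector $D_0$ is a genuine eigenvector, so Tate's classical limit $\hhat_{D_0}(x)=\lim_{n\to\infty}\lambda^{-n}h_{D_0}(f^n(x))$ converges and satisfies the required functional equation. For $k\geq1$, the ansatz $\hhat_{D_k}=h_{D_k}+g_k$ reduces the transformation law to a cohomological equation $g_k\circ f-\lambda g_k=\psi_k$ whose right-hand side is bounded (built from $h_{D_k}\circ f-\lambda h_{D_k}-h_{D_{k-1}}$ and $\hhat_{D_{k-1}}-h_{D_{k-1}}$, both bounded by induction), and this equation is solved by the telescoping series $g_k=-\sum_{n=0}^\infty\lambda^{-n-1}(\psi_k\circ f^n)$, convergent because $|\lambda|>1$. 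Eigenvalues with $|\lambda|\leq1$ produce only sub-exponential growth and can be left as ordinary Weil heights without affecting the limit that defines $\alpha_f$.

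Iterating the transformation law yields the explicit expansion $\hhat_{D_k}(f^n(x))=\sum_{j=0}^{k}\binom{n}{j}\lambda^{n-j}\hhat_{D_{k-j}}(x)$, which grows like $n^{k(x)}|\lambda|^n$ where $k(x)$ is the largest index $j$ with $\hhat_{D_j}(x)\neq0$. Expressing an ample divisor as a $\CC$-linear combination of Jordan basis elements and passing to heights realizes $h_X\circ f^n$ as a finite sum of such terms plus $O(1)$; taking $n$th roots, the exponential rate equals the largest $|\lambda|$ over Jordan blocks on which some $\hhat_{D_j}(x)$ is nonzero. This proves~(a) and identifies $\alpha_f(x)$ as $|\lambda|$ for an eigenvalue $\lambda$ of $f^*$ acting on $\Pic(X)$ (with the convention $\alpha_f(x)=1$ when all contributions with $|\lambda|>1$ vanish). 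Because $f^*$ acts integrally on $\NS(X)$, its eigenvalues are algebraic integers and come in complex-conjugate pairs, so $|\lambda|^2=\lambda\bar\lambda$ is an algebraic integer and hence so is its square root $|\lambda|$; this gives~(b), and since only finitely many such $|\lambda|$ exist, (c) follows immediately from~(a).

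For~(d), the strategy is to show that on an abelian variety the vanishing locus $\{x:\hhat_D(x)=0\}$ of any nonzero nef divisor $D$ in a Jordan block with $|\lambda|=\delta_f$ is contained in a proper $f^{-1}$-invariant subvariety, presumably built from translates of abelian subvarieties on which $D$ restricts to zero. If $\alpha_f(x)<\delta_f$, then $\hhat_D$ would have to vanish at every point of the forward orbit of $x$ for every such $D$, forcing the orbit into the intersection of these loci and contradicting Zariski density. The main obstacle is the inductive construction of $\hhat_{D_k}$ for $k\geq1$: while the single Tate telescoping is classical, cascading it through a Jordan block requires careful bookkeeping to verify that each $\psi_k$ is genuinely bounded, that the resulting $\hhat_{D_k}$ depends $\CC$-linearly and functorially on $D_k$, and that compatibility across Jordan blocks with possibly non-real eigenvalues is preserved.
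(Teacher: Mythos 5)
Your overall strategy matches the paper's: build Jordan-block canonical heights for $|\lambda|>1$, use them to bound the growth of $h_X\circ f^n$ from above and below, and for part~(d) on abelian varieties analyze the vanishing locus of a nef canonical height. The telescoping/cohomological-equation construction of $\hhat_{D_k}$ is essentially the paper's Theorem~\ref{theorem:hhatk}, just phrased coordinatewise rather than in the paper's vector-valued form. However, there are two genuine gaps in the route to~(a)--(c).

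First, your opening move --- ``decompose the action of $f^*$ on $\Pic(X)\otimes\CC$ into Jordan blocks'' --- does not make sense as stated, because $\Pic(X)\otimes\CC$ is in general infinite-dimensional (it contains $\Pic^0(X)\otimes\CC$, which is a $\CC$-vector space of dimension equal to twice the dimension of the Picard variety). You cannot write an ample $H$ as a \emph{finite} $\CC$-linear combination of a Jordan basis for the whole Picard group. The paper's Lemma~\ref{lemma:PffstarPicX} is exactly the missing ingredient: combining the theorem of the base (finite generation of $\NS(X)$) with the characteristic polynomial of the induced endomorphism on the Tate module of $\Pic^0(X)$, one produces a single monic integral polynomial $P_f$ with $P_f(f^*)\Pic(X)\sim 0$. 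This allows one to work inside the finite-dimensional $f^*$-invariant subspace $V=\Span_\QQ(H,f^*H,\ldots,(f^*)^{d-1}H)\subset\Pic(X)_\QQ$, which is where the Jordan decomposition lives. Without this reduction both the decomposition and the algebraic integrality of the relevant eigenvalues (which you attribute only to the $\NS(X)$ action) are unjustified.

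Second, the lower bound $\alower_f(x)\geq|\lambda_k|$ needs more than ``reading off'' the growth rate. When you expand $h_X(f^n(x))$ in the Jordan basis, individual terms can cancel; and the coefficients $c_i$ and heights $h_{E_i}$ are complex-valued, so a lower bound for $|h_{E_k}(f^n(x))|$ does not automatically translate into a lower bound for the ample height defining $\alower_f(x)$. The paper deals with both problems: by choosing $k$ to be the \emph{smallest} index with $\hhat_{E_k}(x)\neq 0$ one gets the clean formula $\hhat_{E_k}(f^n(x))=\lambda_k^n\hhat_{E_k}(x)$ with no lower-order interference; and Lemma~\ref{lemma:alowergehDfn} (splitting a complex divisor as $D_1+iD_2$ and perturbing an ample $H$ by $\pm\e D_j$) is what links $|h_D(f^n(x))|^{1/n}$ back to $\alower_f(x)$. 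Your sketch of~(d) for abelian varieties is in the right spirit, but is far from a proof: what is actually required is the Rosati-involution factorization $\F_D=\a'\circ\a$ for nef $D$ via the classification of $\End(A)_\RR$, a linear-algebra step (Lemma~\ref{lemma:ax0iffbix0}) converting an equation $\a(x)=0$ in $A(\Qbar)_\RR$ into finitely many integral equations $\b_i(x)=0$, and --- crucially --- the Mordell--Weil theorem to conclude that the zero locus intersected with $A(K)$ is not Zariski dense.
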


A consequence of one of the main theorems of this paper is the
following resolution of the first three parts of
Conjecture~\ref{conjecture:afxconj}.

\begin{theorem} 
\label{theorem:thmA} 
Let $f:X\to X$ be a $K$-endomorphism of a normal projective variety.
Then for all $x\in X(\Kbar)$, the limit~\eqref{eqn:arithdegdef}
defining the arithmetic degree~$\a_f(x)$ converges, and $
\bigl\{\a_f(x): x\in X(\Kbar)\bigr\}$ is a finite set of algebraic
integers, so parts~\textup{(a)},~\textup{(b)}, and~\textup{(c)} of
Conjecture~$\ref{conjecture:afxconj}$ are true.
\end{theorem}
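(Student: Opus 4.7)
The plan is to deduce Theorem~\ref{theorem:thmA} from the existence of the Jordan canonical heights~$\hhat_{D_k}$ promised in the abstract, by using them to convert the growth rate~$\a_f(x)$ into information about eigenvalues of a finite-dimensional linear map. Because numerically trivial divisors contribute heights of sub-exponential growth, it suffices to work with the Jordan decomposition of~$f^*$ on the finite-dimensional space~$\NS(X)\otimes\CC$. Enumerate the Jordan chains as $D^{(i)}_0,\dots,D^{(i)}_{r_i-1}$ with eigenvalue~$\l_i$, for~$i=1,\dots,s$. The main construction supplies canonical heights
\[
\hhat_{D^{(i)}_k}:X(\Kbar)\longrightarrow\CC,\qquad
\hhat_{D^{(i)}_k}\circ f=\l_i\hhat_{D^{(i)}_k}+\hhat_{D^{(i)}_{k-1}},\qquad
\hhat_{D^{(i)}_{-1}}\equiv 0,
\]
each comparing to the naive Weil height as $\hhat_{D^{(i)}_k}=h_{D^{(i)}_k}+O(1)$.

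Second, I would expand the class of a chosen ample divisor in the Jordan basis to obtain $h_X=\sum c_{i,k}\hhat_{D^{(i)}_k}+O(1)$. A straightforward induction using the Jordan relation yields
\[
\hhat_{D^{(i)}_k}\circ f^n=\sum_{j=0}^{k}\binom{n}{j}\l_i^{\,n-j}\hhat_{D^{(i)}_{k-j}},
\]
so combining gives the identity
\[
h_X\bigl(f^n(x)\bigr)=\sum_i P_i(n;x)\,\l_i^{\,n}+O(1),
\]
in which each $P_i(n;x)\in\CC[n]$ has degree $<r_i$, with coefficients linear in the values $\hhat_{D^{(i)}_k}(x)$. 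A standard analysis of exponential polynomials with distinct bases~$\l_i$ then shows that
\[
\a_f(x)=\lim_{n\to\infty}\hplus_X\bigl(f^n(x)\bigr)^{1/n}
\]
exists and equals $\max\{|\l_i|,\,1\}$ over those indices~$i$ for which $P_i(\,\cdot\,;x)\not\equiv 0$; this establishes part~(a).

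For parts~(b) and~(c), note that~$f^*$ preserves the integral lattice $\NS(X)_{\textup{free}}\subset\NS(X)\otimes\CC$, so its characteristic polynomial is monic with integer coefficients and every~$\l_i$ is an algebraic integer. Then $|\l_i|^2=\l_i\overline{\l_i}$, being a product of Galois conjugates of~$\l_i$, is an algebraic integer, so~$|\l_i|$ is an algebraic integer as a root of the monic integer polynomial~$P(x^2)$, where~$P$ is the minimal polynomial of~$|\l_i|^2$. Since $\a_f(x)$ always lies in the finite set $\{|\l_1|,\dots,|\l_s|,1\}$, this simultaneously yields~(b) and~(c).

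The principal obstacle is not this deduction but the construction of the~$\hhat_{D^{(i)}_k}$ for $k\geq1$. For $k=0$ and $|\l|>1$ the usual Tate telescope $\hhat_D(x)=\lim\l^{-n}h_D(f^n(x))$ suffices, but at higher positions in a Jordan chain one must modify the telescope to absorb the polynomial contribution coming from $\hhat_{D^{(i)}_{k-1}}$, and for eigenvalues with $|\l|\leq1$ extra care is required to produce functions that nonetheless compare to Weil heights up to $O(1)$.
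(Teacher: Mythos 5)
Your high-level strategy matches the paper's, but there are genuine gaps, the most serious of which is in the sentence ``A standard analysis of exponential polynomials with distinct bases~$\l_i$ then shows that $\a_f(x)$ \dots\ exists.'' It does not. If two eigenvalues of equal modulus (e.g.\ a complex-conjugate pair $\l,\bar\l$) both contribute nontrivially, the dominant term in $h_X\bigl(f^n(x)\bigr)$ has the form $P(n)\l^n+\overline{P(n)}\,\bar\l^n=2|\l|^n\Re\bigl(P(n)e^{in\theta}\bigr)$ with $\theta=\arg\l$, which oscillates and can become exponentially small infinitely often depending on Diophantine properties of~$\theta$; no elementary principle guarantees that the limit exists. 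The paper sidesteps this cancellation entirely: it lets~$k$ be the \emph{smallest} index with $\hhat_{E_k}(x)\ne0$, so that by the Jordan functional equation all lower-index contributions vanish at~$x$ and $\hhat_{E_k}\bigl(f^n(x)\bigr)=\l_k^n\hhat_{E_k}(x)$ is a single pure exponential, and then invokes Lemma~\ref{lemma:alowergehDfn}, which lower-bounds $\alower_f(x)$ by $\liminf\bigl|h_D\bigl(f^n(x)\bigr)\bigr|^{1/n}$ for any $D\in\Div(X)_\CC$ by writing $D=D_1+iD_2$ and squeezing between the ample divisors $H\pm\e D_j$. Those two devices --- the minimal index~$k$ and Lemma~\ref{lemma:alowergehDfn} --- are precisely what make the lower bound $\alower_f(x)\ge|\l_k|$ go through, and your proposal has neither.

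There are two further issues. First, Theorem~\ref{theorem:thmB} constructs canonical heights from \emph{linear} equivalences in $\Pic(X)_\CC$, whereas your Jordan blocks live in $\NS(X)_\CC$ and hence only satisfy \emph{algebraic} equivalences; the error class then lies in $\Pic^0(X)$, where $h_E=O\bigl((\hplus_X)^{1/2}\bigr)$ rather than $O(1)$, and the telescoping series only converges under the restriction $\a_f(x)<|\l|^2$ (this is exactly Theorem~\ref{theorem:thmC}, which the paper states without proof). The paper instead constructs a finite-dimensional $f^*$-invariant subspace $V\subset\Pic(X)_\QQ$ spanned by $H,f^*H,\dots$, using the monic annihilating polynomial $P_f$ from Lemma~\ref{lemma:PffstarPicX} (which handles $\NS(X)$ and $\Pic^0(X)$ separately), and takes Jordan form inside $V_\CC$ so that the relations are genuine linear equivalences. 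Second, there is \emph{no} canonical height satisfying $\hhat_{D_k}=h_{D_k}+O(1)$ when $|\l_i|\le1$, so your expansion $h_X=\sum c_{i,k}\hhat_{D^{(i)}_k}+O(1)$ cannot hold as stated. Your closing paragraph treats this as a fixable construction problem (``extra care is required to produce functions that nonetheless compare to Weil heights up to $O(1)$''), but such functions do not exist in general; the paper instead uses Theorem~\ref{theorem:hhatk}(a) to bound the raw growth $\limsup\bigl|h_{E_i}\bigl(f^n(x)\bigr)\bigr|^{1/n}\le1$ for those eigenvalues and absorbs them into an $O\bigl((1+\e)^n\bigr)$ error term, keeping canonical heights only for $|\l_i|>1$.
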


The proof of Theroem~\ref{theorem:thmA} uses an extension of the
classical theory of dynamical canonical
heights~\cite{callsilv:htonvariety} to Jordan blocks, a result that is
of independent interest.  We recall that if a divisor
$D\in\Div(X)\otimes\CC$ satisfies a linear equivalence $f^*D\sim \l D$ for
some $\l\in\CC$ with $|\l|>1$, then the classical theory says that for
every~$x\in X(\Kbar)$, the Tate limit
\[
  \hhat_{f,D}(x)=\lim_{n\to\infty} \l^{-n}h_D\bigl(f^n(x)\bigr)
\]
exists, and the resulting function~$\hhat_{f,D}:X(\Kbar)\to\CC$
satisfies the following functional equation and normalization
condition:
\[
  \hhat_{f,D}\circ f=\l \hhat_{f,D}
  \qquad\text{and}\qquad
  \hhat_{f,D}=h_D+O(1). 
\]
We generalize this construction to allow a sequence of
divisors that form a Jordan block for the action of~$f^*$
on~$\Pic(X)\otimes\CC$.

\begin{theorem}
\label{theorem:thmB}
Let $X/K$ be a normal projective variety, let $f:X\to X$ be a
$K$-morphism, let $\l\in\CC$ with $|\l|>1$,
and let
\[
  D_0,D_1,D_2,\ldots \in\Div(X)\otimes\CC
\]
be divisors satisfying linear equivalences in Jordan block form,
\[
  \begin{array}{ccc@{\;}c@{\;}c@{\;}c@{\;}c@{\;}c}
    f^*D_0 & \sim & \l D_0 \\
    f^*D_1 & \sim & \phantom{\l}D_0 & + & \l D_1 \\
    f^*D_2 & \sim &&& \phantom{\l}D_1 & + & \l D_2 \\
    \vdots & &&&&\ddots && \ddots \\
  \end{array}
\]
Further, for each~$k$, let~$h_{D_k}$ be a Weil height function 
associated to the divisor~$D_k$.
\begin{parts}
\Part{(a)}
There are unique functions
\[
  \hhat_{D_0},\hhat_{D_1},\hhat_{D_2},\ldots :X(\Kbar)\longrightarrow\CC
\]
satisfying both the normalization conditions
\begin{equation}
  \label{eqn:hhatknomlztnintro}
  \hhat_{D_k} = h_{D_k} + O(1)
\end{equation}
and the functional equations
\begin{equation}
  \label{eqn:hhatkfuncteqnzzintro}
  \hhat_{D_k}\circ f = \l \hhat_{D_k} + \hhat_{D_{k-1}},
\end{equation}
where by convention we set $\hhat_{D_{-1}}=0$.
\Part{(b)}
The canonical height functions described in~\textup{(a)} satisfy
the recursively defined limit formulas
\begin{equation}
  \label{eqn:hhatrecursdef}
  \hhat_{D_k}(x) = \lim_{n\to\infty} 
   \left(\l^{-n}h_{D_k}\bigl(f^n(x)\bigr)
     - \sum_{i=1}^k \binom{n}{i}\l^{-i}\hhat_{D_{k-i}}(x)\right).
\end{equation}
\end{parts}
\end{theorem}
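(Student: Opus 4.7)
The plan is to prove both parts of the theorem simultaneously by induction on $k$, establishing en route the stronger quantitative statement that the partial-sum expression inside the limit~\eqref{eqn:hhatrecursdef} approaches its limit at rate $O(|\l|^{-n})$, uniformly in~$x$. The base case $k=0$ is the classical Call--Silverman construction: the relation $f^*D_0\sim\l D_0$ yields $h_{D_0}\circ f = \l h_{D_0} + O(1)$ with uniform implied constant, from which the telescoping increments
\[
\l^{-(n+1)} h_{D_0}(f^{n+1}(x)) - \l^{-n} h_{D_0}(f^n(x)) = O\bigl(|\l|^{-n-1}\bigr)
\]
form a geometric series, giving both convergence at the claimed rate and the normalization $\hhat_{D_0}=h_{D_0}+O(1)$.

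For the inductive step, I fix $k\ge 1$, suppose $\hhat_{D_0},\ldots,\hhat_{D_{k-1}}$ have been constructed so as to satisfy the functional equations, the normalization, and the quantitative rate, and set
\[
A_n^{(k)}(x) := \l^{-n} h_{D_k}\bigl(f^n(x)\bigr) - \sum_{i=1}^{k} \binom{n}{i}\l^{-i}\,\hhat_{D_{k-i}}(x).
\]
The equivalence $f^*D_k\sim D_{k-1}+\l D_k$ gives $h_{D_k}\circ f = \l h_{D_k} + h_{D_{k-1}} + O(1)$ with uniform implied constant, and applying Pascal's identity $\binom{n+1}{i}-\binom{n}{i}=\binom{n}{i-1}$ to the increments of the correction sum, a direct computation reorganizes everything into the key telescoping identity
\[
A_{n+1}^{(k)}(x) - A_n^{(k)}(x) = \l^{-1}\bigl(A_n^{(k-1)}(x) - \hhat_{D_{k-1}}(x)\bigr) + O\bigl(|\l|^{-n-1}\bigr).
\]
By the inductive rate hypothesis the right-hand side is $O(|\l|^{-n-1})$, so the sequence $\{A_n^{(k)}(x)\}$ is Cauchy; I define $\hhat_{D_k}(x):=\lim_{n\to\infty} A_n^{(k)}(x)$, which is exactly~\eqref{eqn:hhatrecursdef}. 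Summing the geometric tail propagates the rate $O(|\l|^{-n})$ to level~$k$, and the normalization~\eqref{eqn:hhatknomlztnintro} is immediate from $A_0^{(k)}(x)=h_{D_k}(x)$.

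To verify the functional equation~\eqref{eqn:hhatkfuncteqnzzintro}, I take the $n\to\infty$ limit of the identity obtained by substituting $f(x)$ for $x$ in the definition of $A_n^{(k)}$ and comparing with $\l\hhat_{D_k}(x)$ via the shifted formula $\hhat_{D_k}(x)=\lim_n A_{n+1}^{(k)}(x)$. Expanding $\hhat_{D_{k-i}}(f(x))=\l\hhat_{D_{k-i}}(x)+\hhat_{D_{k-i-1}}(x)$ by the inductive functional equations, reindexing the resulting double correction, and once again invoking Pascal's identity, the $h_{D_k}$-contributions cancel and the binomial bookkeeping leaves a single linear-in-$n$ residue that evaluates to $\hhat_{D_{k-1}}(x)$. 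Uniqueness is then standard: given a second solution $\hhat_{D_k}'$, the bounded difference $g_k:=\hhat_{D_k}-\hhat_{D_k}'$ satisfies $g_k\circ f=\l g_k + g_{k-1}$; by induction $g_{k-1}\equiv 0$, so $g_k(f^n(x))=\l^n g_k(x)$, and since $|\l|>1$ while $g_k$ is bounded we conclude $g_k\equiv 0$.

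The principal obstacle is the binomial-coefficient bookkeeping: the $n$-dependent correction sum inside $A_n^{(k)}$ must be engineered so that its increments in~$n$ produce precisely the $h_{D_{k-1}}$ term generated by the functional equation of~$D_k$, and so that the resulting identity collapses cleanly onto the level-$(k-1)$ discrepancy $A_n^{(k-1)}-\hhat_{D_{k-1}}$. It is this cancellation that forces the strengthened inductive hypothesis with rate $O(|\l|^{-n})$, since each level of the recursion absorbs a factor $\l^{-1}$ yet must still yield a summable error.
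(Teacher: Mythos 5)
Your proof is correct, but it takes a genuinely different route from the paper's. You proceed by induction on $k$, defining each $\hhat_{D_k}$ directly as the limit of the partial-sum expression $A_n^{(k)}$ in formula~\eqref{eqn:hhatrecursdef}, and you carry a strengthened inductive hypothesis (the $O(|\l|^{-n})$ decay rate, uniform in $x$) which powers both the convergence and the propagation of the normalization. I verified your key telescoping identity
\[
A_{n+1}^{(k)}(x) - A_n^{(k)}(x) = \l^{-1}\bigl(A_n^{(k-1)}(x) - \hhat_{D_{k-1}}(x)\bigr) + O\bigl(|\l|^{-n-1}\bigr),
\]
and also checked that, after expanding $\hhat_{D_{k-i}}(f(x)) = \l\hhat_{D_{k-i}}(x)+\hhat_{D_{k-i-1}}(x)$ and reindexing via Pascal, one gets the exact identity $A_n^{(k)}(f(x)) = \l A_{n+1}^{(k)}(x) + \hhat_{D_{k-1}}(x)$ for every $n$, whence the functional equation follows in the limit. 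Your uniqueness argument by induction on $k$ (the bounded difference $g_k$ satisfies $g_k\circ f=\l g_k$ once $g_{k-1}\equiv0$, so $|\l|>1$ forces $g_k\equiv0$) is also sound.

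The paper instead vectorizes: it packages $h_{D_0},\ldots,h_{D_\ell}$ into a single $\bfh_\bfD$, defines a bounded error vector $\bfE_\bfD = \bfh_\bfD\circ f - \L\bfh_\bfD$, and constructs $\bfhhat_\bfD = \bfh_\bfD + \sum_{n\ge0}\L^{-n-1}\bfE_\bfD\circ f^n$ as an absolutely convergent series, using the matrix-norm estimate $\|\L^{-n}\| \le C\,n^{\ell}|\l|^{-n}$. The functional equation $\bfhhat_\bfD\circ f = \L\bfhhat_\bfD$ then follows by a single formal reindexing, and the scalar recursion~\eqref{eqn:hhatrecursdef} is derived \emph{a posteriori} from the functional equation and normalization, rather than serving as the definition. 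Uniqueness is proved at once for the whole vector via the spectral estimate $\lim_n\|\L^n\bfv\|^{1/n}=|\l|$ for $\bfv\neq\bfzero$. Your scalar induction is more elementary and makes the combinatorial structure of the binomial corrections explicit (and makes the limit formula~\eqref{eqn:hhatrecursdef} primary rather than derived), at the cost of a somewhat more delicate bookkeeping argument for the functional equation. The paper's matrix approach hides that bookkeeping inside Jordan matrix algebra and handles all coordinates simultaneously, which is arguably cleaner for the functional equation and uniqueness but requires the auxiliary matrix estimates of Lemma~\ref{lemma:jordanmatrixests}. Both routes are valid and yield the same theorem.
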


Part~(d) of Conjecture~\ref{conjecture:afxconj} appears to be more
difficult.  In Section~\ref{section:arithdegabelianvar} we prove
Conjecture~\ref{conjecture:afxconj}(d) in the case that~$X$ is an
abelian variety defined over a number field and~$f:X\to X$ is an
isogeny; see Corollary~\ref{corollary:Ofxdenseafxdf}.  The proof uses
the following result, also of independent interest, giving a
nontrivial generalization to nef divisors of the standard fact that
the canonical height on an abelian variety relative to an ample
divisor vanishes at precisely the points of finite order.

\begin{theorem}
\label{theorem:hADx0ax0}
Let $A/\Qbar$ be an abelian variety, let
$D\in\Div(A)\otimes\RR$ be a nonzero nef divisor, and
let~$\qhat_{A,D}$ be the quadratic part of the
canonical height on~$A$ with respect to~$D$.  Then there is a unique
abelian subvariety $B_D\subsetneq A$ such that
\[
  \bigl\{ x\in A(\Qbar) : \qhat_{A,D}(x)=0 \bigr\}
  = B_D(\Qbar) + A(\Qbar)_\tors.
\]
\end{theorem}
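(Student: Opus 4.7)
My plan is to identify $B_D$ as the identity component of the kernel of the polarization-type homomorphism $\f_D\colon A\to\hat A$, $x\mapsto t_x^*D-D$, attached to the class of $D$ in $\NS(A)\otimes\RR$. First I would reduce to the symmetric case: replacing $D$ by $\frac12(D+[-1]^*D)$ modifies it only by an element of $\Pic^0(A)\otimes\RR$, which contributes a linear term to the canonical height and leaves $\qhat_{A,D}$ unchanged, and symmetrization preserves the numerical class and hence nef-ness. The next step is to establish that $\qhat_{A,D}$ is positive semi-definite on $A(\Qbar)$: for any ample divisor $H$, the class $D+\e H$ is ample for every $\e>0$, so $\qhat_{A,D+\e H}=\qhat_{A,D}+\e\qhat_{A,H}\ge 0$, and letting $\e\to 0$ preserves the inequality.

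Set $B_D=\Ker(\f_D)^0$. The restriction $\f_D|_{B_D}$ vanishes by construction, so because the map $\NS(B_D)\otimes\RR\to\Hom(B_D,\hat B_D)\otimes\RR$ sending a class to its polarization is injective, the restriction $D|_{B_D}$ is numerically trivial. The quadratic part of the canonical height attached to a numerically trivial class is identically zero, and by compatibility of canonical heights with closed immersions of abelian varieties, $\qhat_{A,D}$ vanishes on $B_D(\Qbar)$; since it also vanishes on $A(\Qbar)_\tors$ by general principles, we obtain the inclusion $B_D(\Qbar)+A(\Qbar)_\tors\subseteq\{x:\qhat_{A,D}(x)=0\}$.

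For the reverse inclusion, suppose $\qhat_{A,D}(x)=0$. Positive semi-definiteness together with the Cauchy-Schwarz inequality for the associated bilinear form $\langle\cdot,\cdot\rangle_D$ forces $\langle x,y\rangle_D=0$ for every $y\in A(\Qbar)$. Using the standard identification $\langle x,y\rangle_D=\hhat_P(y,\f_D(x))$, where $\hhat_P$ is the Poincaré height pairing on $A\times\hat A$ and the identity uses the symmetry of $D$, this says $\f_D(x)$ pairs trivially with every element of $A(\Qbar)$. Non-degeneracy of the Poincaré pairing modulo torsion—a classical theorem—then forces $\f_D(x)\in\hat A(\Qbar)_\tors$. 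Choosing $N\ge 1$ with $N\f_D(x)=0$ gives $Nx\in\Ker(\f_D)(\Qbar)$; since $\Ker(\f_D)/B_D$ is a finite group scheme, some further multiple lands in $B_D(\Qbar)$, and the divisibility of $B_D(\Qbar)$ over the algebraically closed field $\Qbar$ lets us write $x=b+t$ with $b\in B_D(\Qbar)$ and $t\in A(\Qbar)_\tors$, as required.

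Uniqueness of $B_D$ follows from a short argument: if $B,B'\subset A$ are abelian subvarieties with $B(\Qbar)+A(\Qbar)_\tors=B'(\Qbar)+A(\Qbar)_\tors$, then for each $x\in B(\Qbar)$ some multiple $Nx$ lies in $B\cap B'$, so every $\Qbar$-point of the abelian variety $B/(B\cap B')^0$ would be torsion—impossible unless the quotient is zero—giving $B\subseteq B'$ and then $B=B'$ by symmetry. The principal obstacle is the converse inclusion in the third paragraph: extracting the structural conclusion $\f_D(x)\in\hat A(\Qbar)_\tors$ from the single scalar equation $\qhat_{A,D}(x)=0$ requires simultaneously exploiting positive semi-definiteness (which is where the nef hypothesis enters in an essential way) and the non-degeneracy of the Poincaré height pairing.
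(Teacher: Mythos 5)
Your overall strategy---symmetrize, show $\qhat_{A,D}\ge 0$ via $D+\e H$, use Cauchy--Schwarz to pass from $\qhat_{A,D}(x)=0$ to $\langle x,\cdot\rangle_D\equiv 0$, and invoke non-degeneracy of a height pairing---is a legitimate route to the key structural fact, and it does differ from the paper's mechanism (the paper factors $\F_D=\a'\circ\a$ using the Jordan-algebra structure of $\NS(A)_\RR$ and the positive-semidefinite square root in $\Hcal_n(\RR)$, $\Hcal_n(\CC)$, $\Hcal_n(\HH)$, then observes $\qhat_{A,D}(x)=\qhat_{A,H}(\a(x))$). Both land at ``$\F_D(x)=0$ in $A(\Qbar)_\RR$,'' and yours avoids the classification of $\End(A)_\RR$.

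There is, however, a genuine gap, and it is exactly the point the paper goes out of its way to handle. Your proposal treats $\f_D$ as an honest homomorphism of abelian varieties: you set $B_D=\Ker(\f_D)^0$, you ``choose $N\ge 1$ with $N\f_D(x)=0$,'' and you say ``$\Ker(\f_D)/B_D$ is a finite group scheme.'' None of these makes sense when $D\in\Div(A)\otimes\RR$ has irrational coefficients, which the theorem explicitly allows: then $\f_D\in\Hom(A,\hat A)\otimes\RR$ is not a morphism, it has no scheme-theoretic kernel, and $\f_D(x)$ is an element of $\hat A(\Qbar)\otimes\RR$ rather than of $\hat A(\Qbar)$, so no integer $N$ can ``kill'' it. What the non-degeneracy of the Poincar\'e pairing actually yields is $\f_D(x)=0$ in $\hat A(\Qbar)\otimes\RR$, not ``$\f_D(x)$ is torsion.'' Passing from this $\RR$-linear vanishing to a finite set of conditions $\b_i(x)=0$ with $\b_i\in\End(A)$---so that one can cut out an honest abelian subvariety $B_D=\bigl(\bigcap\Ker(\b_i)\bigr)^0$---requires the elementary but essential descent in Lemma~\ref{lemma:ax0iffbix0}: write $\f_D=\sum c_i\psi_i$ with $c_i\in\RR$ chosen $\QQ$-linearly independent and $\psi_i\in\Hom(A,\hat A)$, and use $\QQ$-linear independence of the $c_i$ to conclude that each $\psi_i(x)$ vanishes in $\hat A(\Qbar)\otimes\QQ$. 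Without this step your argument only covers $D\in\Div(A)\otimes\QQ$. Once that lemma is supplied, the rest of your argument (including the uniqueness step, which is fine and slightly different from the paper's finite-type scheme argument) goes through.
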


We make a number of remarks, after which we give a brief
section-by-section summary of the contents of the paper.

\begin{remark}
\label{remark:iteratefncleqn}
Iterating~\eqref{eqn:hhatkfuncteqnzzintro} gives a general functional
equation
\begin{equation}
  \label{eqn:hhatkfuncteqngeneral}
  \hhat_{D_k}\circ f^n = \sum_{i=0}^k \binom{n}{i}\l^{n-i}\hhat_{D_{k-i}},
\end{equation}
valid for all $n\ge0$; see Remark~\ref{remark:fnctindivhk}.
\end{remark}

\begin{remark}
The existence of the limit defining the arithmetic degree~$\a_f(x)$ is
significant, since it was observed in~\cite{kawsilv:arithdegledyndeg}
that the existence of~$\a_f(x)$ determines the asymptotic growth of
the orbit counting function via the formula
\[
  \lim_{T\to\infty}
    \frac{\#\bigl\{n\ge0 : h_X\bigl(f^n(x)\bigr)\le T\bigr\}}{\log T}
  = \frac{1}{\log\a_f(x)},
\]
where the limit is understood to be~$\infty$ if~$\a_f(x)=1$.
\end{remark}

\begin{remark}
An elaboration of the proof of Theorem~\ref{theorem:hhatk} can be used
to construct local canonical height functions (also sometimes called
Green functions) that are approrpiately normalized and satisfy
analogous local functional equations.  We refer the reader
to~\cite{callsilv:htonvariety} for a detailed description of the case
of a single eigendivisor $f^*D\sim \l D$.  But since the focus of this
paper is on global results, we have restricted attention to that
case.
\end{remark}

\begin{remark}
If the Jordan block \emph{linear equivalences} in
Theorem~\ref{theorem:thmB} are replaced by \emph{algebraic
  equivalences}, then a modification of the proof of
Theorem~\ref{theorem:thmB} gives canonical heights for points whose
(upper) arithmetic degrees are smaller than~$|\l|^2$.  These heights
satisfy the functional equation~\eqref{eqn:hhatkfuncteqnzzintro} and a
weak form of the normalization
condition~\eqref{eqn:hhatknomlztnintro}.  We briefly describe these
``algebraic equivalence canonical heights'' and their properties,
without proof, in Section~\ref{section:algequivhts}.
\end{remark}

\begin{remark}
If $D$ is an ample divisor on an abelian variety, then it is well known that
\begin{equation}
  \label{eqn:hAD0iffxinAtors}
  \bigl\{ x\in A(\Qbar) : \qhat_{A,D}(x)=0 \bigr\} = A(\Qbar)_\tors.
\end{equation}
Theorem~\ref{theorem:hADx0ax0} generalizes~\eqref{eqn:hAD0iffxinAtors}
to the case that~$D$ is only assumed to be nef.  In particular, we
note that if~$D\ne0$ is nef and~$A$ is geometrically simple,
then~\eqref{eqn:hAD0iffxinAtors} is still true, since the simplicity
of~$A$ forces~$B_D=0$. 
\par
More generally, if $f:X\to X$ is a morphism 
and~$D\in\Div(X)\otimes\RR$ satisfies an algebraic equivalence
$f^*D\equiv\l D$ for some $\l>\sqrt{\d_f}$, then the usual canonical
height limit $\hhat_{X,D,f}(x)=\lim \l^{-n}h_{X,D}\bigl(f^n(x)\bigr)$
exists; see Remark~\ref{remark:hhatiflgtdf12}
and~\cite[Theorem~5]{kawsilv:arithdegledyndeg}.
Further, if~$D$ is ample, then~$\hhat_{X,D}(x)=0$ implies that~$x$ is
$f$-preperiodic. But it seems to be a very interesting
question to describe the set \text{$\{x\in X(\Kbar):\hhat_{X,D}(x)=0\}$}
when~$D\ne0$ is nef, but not necessarily ample.
\end{remark}

\begin{remark}
In this paper we prove Conjecture~\ref{conjecture:afxconj} for abelian
varieties. In an earlier paper~\cite{arxiv1111.5664}, the second
author proved Conjecture~\ref{conjecture:afxconj} for the
torus~$\GG_m^N$. The proofs have some features in common, especially a
final, elementary linear algebra step in which one analyzes the kernel
of a linear transformation defined over a field that is larger than
the field of definition of the underlying vector space.  On the other
hand, the two cases use somewhat different tools to reduce to the
linear algebra step. For the toric case, one uses local canonical
height functions and Baker's theorem on linear forms in logarithms,
while for the abelian variety case, one uses the classification of nef
divisors and two fundamental global height formulas
(Propositions~\ref{proposition:axbybxay}
and~\ref{proposition:xyADxFDyAH}). It would be interesting to 
combine these two techniques to prove
Conjecture~\ref{conjecture:afxconj} for semi-abelian varieties.
\end{remark}

We conclude this introduction with a summary of the contents of the
paper.  Section~\ref{section:defnotation} begins by setting notation
and proving an elementary estimate for powers of a Jordan matrix.  In
Section~\ref{section:jordanalgeq} we prove Theorem~\ref{theorem:thmB},
which we restate slightly more generally as
Theorem~\ref{theorem:hhatk}.  This gives the existence of canonical
heights for Jordan blocks.  Section~\ref{section:arithdeg} contains
the proof of Theorem~\ref{theorem:thmA}, which verifies
Conjecture~\ref{conjecture:afxconj}(a,b,c) describing properties of
the arithmetic degree.  In Section~\ref{section:algequivhts} we state,
without proof, a version of Theorem~\ref{theorem:thmB} in which the
linear equivalences are replaced by algebraic equivalences.  In
Section~\ref{section:arithdegabelianvar} we study canonical heights
and arithmetic degrees on abelian varieties defined over number
fields.  In this setting, we classify points satisfying
$\qhat_{A,D}(x)=0$ for a nonzero nef divisor~$D$, and we prove that if
the orbit of a point is Zariski dense, then its arithmetic degree
takes on its maximum possible value, as predicted by
Conjecture~\ref{conjecture:afxconj}(d).  In addition, there are two
sections that work out examples illustrating the general theory.
Section~\ref{section:example} explicitly describes the 3-dimensional
Jordan block canonical heights associated to certain isogenies
$f:E^2\to E^2$, where~$E$ is a non-CM elliptic curve, and
Section~\ref{section:abeliansurfaceexample} examines the proof of
Theorem~\ref{theorem:hADx0ax0} for abelian varieties~$A$ whose
endomorphism algebra $\End(A)\otimes\QQ$ is  a real quadratic field.

\begin{acknowledgement}
The authors would like to thank Rob Benedetto for a helpful suggestion
that was instrumental in the eventual proof of
Theorem~\ref{theorem:thmA}, and Terry Loring for his quaternionic advice.
\end{acknowledgement}

\newpage

\section{Definitions and notation}
\label{section:defnotation}
In this section we set notation and give definitions that will be used
throughout the remainder of this paper (except for
Section~\ref{section:arithdegabelianvar} where we will take~$K$ to be
a number field). We begin by fixing:
\begin{notation}
\item[$K$]
a global field, which for the purposes of this paper will mean a
field~$K$ of characteristic~$0$, a fixed algebraic closure~$\Kbar$,
and a collection of absolute values on~$\Kbar$ such that there is a
well-defined theory of Weil height functions, as explained for example
in~\cite[Chapters~1--4]{lang:diophantinegeometry}.
\item[$X/K$]
a normal projective varitey.
\item[$f$]
a $K$-morphism $f:X\to X$.
\item[$h_X$]
a Weil height on $X(\Kbar)$ relative to an ample divisor.
\item[$\hplus_X$]
${}=\max\{h_X,1\}$.
\end{notation}

To ease notation, we use subscripts~$\QQ$,~$\RR$,
and~$\CC$ on abelian groups to indicate tensoring over~$\ZZ$ with the
indicated field. So for example, if~$X$ is a variety and~$A$ is an
abelian variety, then
\[
  \Div(X)_\CC=\Div(X)\otimes\CC,\quad
  \NS(X)_\RR=\NS(X)\otimes\RR,\quad
  A(\Qbar)_\QQ=A(\Qbar)\otimes\QQ.
\]

\begin{definition}
Let~$x\in X(\Kbar)$.  The \emph{arithmetic degree} of~$x$ is
the limit
\[
  \a_f(x) = \lim_{n\to\infty} \hplus_X\bigl(f^n(x)\bigr)^{1/n},
\]
if the limit exists. In any case, the \emph{upper and lower arithmetic
  degrees} of~$x$ are the quantities
\begin{equation}
  \label{eqn:alowaupdefs}
  \alower_f(x) = \liminf_{n\to\infty} \hplus_X\bigl(f^n(x)\bigr)^{1/n},
  \quad
  \aupper_f(x) = \limsup_{n\to\infty} \hplus_X\bigl(f^n(x)\bigr)^{1/n}.
\end{equation}
\end{definition}

\begin{definition}
For a given $\ell\ge0$ and $\l\in\CC$, we write $\L$ for the
$(\ell+1)$-dimensional lower Jordan block matrix
\[
  \L =
    \begin{pmatrix}
      \l & 0 & \cdots & 0 \\
       1 & \l & \cdots & 0 \\
       \vdots & & \ddots & \vdots \\
       0 & \cdots & 1 & \l \\
    \end{pmatrix}.
\]
\end{definition}

Since we will be studying relationships between height functions
associated to many different divisors, it is convenient to use 
vector-valued height functions; cf.\ \cite{MR2011389}.

\begin{definition}
For divisors~$D_0,\ldots,D_\ell\in\Div(X)_\CC$ and associated height
functions~$h_{D_0},\ldots,h_{D_\ell}$, we define a (column)
vector-valued height function
\[
  \bfh_\bfD : X(\Kbar) \longrightarrow \CC^{\ell+1},
  \qquad
  \bfh_\bfD(x) = {}^t\bigl(h_{D_0}(x),\ldots,h_{D_\ell}(x)\bigr).
\]
If the divisors and heights have been fixed, to ease notation we write
\[
  h_k=h_{D_k}\quad\text{for $0\le k\le\ell$.}
\]
\end{definition}

\begin{definition}
We use \text{$\|\,\cdot\,\|$} to denote the sup norm of a vector or a
matrix, i.e., for vectors $\bfa=(a_i)$ and matrices $B=(b_{ij})$ with
complex coordinates,
\[
  \|\bfa\|=\max|a_i| \quad\text{and}\quad \|B\|=\max |b_{ij}|.
\]
We will frequently use the elementary triangle inequality estimate
\[
  \|B\bfa\| \le \dim(B)\cdot \|B\|\cdot \|\bfa\|.
\]
\end{definition}

We prove two elementary estimates about the powers of a Jordan matrix.

\begin{lemma}
\label{lemma:jordanmatrixests}
\textup{(a)}
For all $n\ge1$,
\[
  \|\L^n\| \le n^\ell \max\bigl\{|\l|,1\bigr\}^n.
\]
\par\noindent\textup{(b)}\enspace
If  $\bfv\in\CC^{\ell+1}$ is a nonzero vector, then
\[
  \lim_{n\to\infty} \|\L^n\bfv\|^{1/n} = |\l|.
\]
\end{lemma}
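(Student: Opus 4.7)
The plan is to reduce everything to the observation that the lower Jordan block $\L$ of size $\ell+1$ decomposes as $\L = \l I + N$, where $N$ is the nilpotent lower shift matrix satisfying $N^{\ell+1}=0$. Since $I$ and $N$ commute, the binomial theorem gives
\[
  \L^n = \sum_{i=0}^{\ell} \binom{n}{i} \l^{n-i} N^i,
\]
from which one reads off that $(\L^n)_{ij} = \binom{n}{i-j}\l^{n-(i-j)}$ for $i \ge j$ and is $0$ otherwise, so
\[
  \|\L^n\| = \max_{0 \le k \le \ell} \binom{n}{k}\,|\l|^{n-k}.
\]

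For part (a), I would use the crude bound $\binom{n}{k} \le n^k$ valid for $n \ge 1$ and $0 \le k \le n$. If $|\l| \ge 1$ then $|\l|^{n-k} \le |\l|^n$, so each term in the maximum is bounded by $n^k|\l|^n \le n^\ell|\l|^n$; if $|\l| < 1$ then $|\l|^{n-k} \le 1$, so each term is bounded by $n^k \le n^\ell$. Either way, $\|\L^n\| \le n^\ell\max\{|\l|,1\}^n$, as required.

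For part (b), the case $\l = 0$ is immediate: $\L = N$ is nilpotent of index $\ell+1$, so $\L^n\bfv = 0$ for $n > \ell$ and the limit equals $0 = |\l|$. If $\l \ne 0$, then since $\bfv \ne 0$ there is a largest integer $m$ with $0 \le m \le \ell$ and $N^m\bfv \ne 0$. Factoring $\l^n$ out of $\L^n\bfv$ gives $\L^n\bfv = \l^n p(n)$, where
\[
  p(n) = \sum_{i=0}^{m} \binom{n}{i}\l^{-i}N^i\bfv
\]
is a $\CC^{\ell+1}$-valued polynomial in $n$. The coefficient of $n^m$ in $p(n)$ equals $\tfrac{1}{m!}\l^{-m}N^m\bfv$, which is nonzero by the choice of $m$, so $\|p(n)\|$ is sandwiched between positive multiples of $n^m$ for $n$ large. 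Taking $n$-th roots yields $\|\L^n\bfv\|^{1/n} = |\l|\cdot\|p(n)\|^{1/n} \to |\l|$.

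There is no serious obstacle in this argument; the one delicate point is the choice of $m$ maximal with $N^m\bfv \ne 0$, which is exactly what ensures that the leading coefficient of $p(n)$ does not vanish and hence forces the limit in (b) to equal $|\l|$ rather than something strictly smaller.
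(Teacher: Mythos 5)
Your proof is correct and takes essentially the same route as the paper: decompose $\L = \l I + N$ with $N$ nilpotent, expand $\L^n$ by the binomial theorem, and read off the entries $\binom{n}{i-j}\l^{n-(i-j)}$. Part (a) is identical. For part (b) you package the argument slightly differently: the paper bounds $\limsup \|\L^n\bfv\|^{1/n}\le|\l|$ by a crude $O(n^\ell\l^n)$ estimate on all coordinates, and gets $\liminf\ge|\l|$ by observing that if $k$ is the \emph{smallest} index with $v_k\ne0$ then the $k$-th coordinate of $\L^n\bfv$ equals $\l^n v_k$ exactly; you instead take the \emph{largest} $m$ with $N^m\bfv\ne0$, factor $\L^n\bfv=\l^n p(n)$ with $p$ a $\CC^{\ell+1}$-valued polynomial of exact degree $m$, and conclude $\|p(n)\|\asymp n^m$. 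These are equivalent (indeed $m=\ell-k$), and your version has the minor advantage of yielding both bounds in one stroke, but the underlying idea is the same.
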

\begin{proof}
We write $\L=\l I+N$ with $N$ nilpotent. Then $N^{\ell+1}=0$, so
\[
  \L^n = (\l I + N)^n
  = \sum_{i=0}^\ell \binom{n}{i} \l^{n-i}N^i.
\]
\par\noindent(a)\enspace
Using the nilpotent form of~$N$, we see that the
$ij^{\text{th}}$-entry of~$\L^n$ is
\[
  \L^n_{ij} = \binom{n}{i-j} \l^{n-(i-j)},
\]
where we set $\binom{n}{k}=0$ if $k<0$, and of
course~$\binom{n}{k}=0$ if $k>n$.  Hence
\[
  \|\L^n\| =  \max_{0\le k\le\min\{\ell,n\}} \binom{n}{k} |\l|^{n-k} 
  \le n^\ell \max\bigl\{|\l|,1\bigr\}^n.
\]
\par\noindent(b)\enspace
If $\l=0$, then $\L^{\ell+1}=0$, so the result is trivially true.  We
assume now that~$\l\ne0$.  We write $\bfv={}^t(v_0,\ldots,v_\ell)$.
Then the $i^{\text{th}}$-coordinate of~$\L^n\bfv$ satisfies
\[
  (\L^n\bfv)_i
  = \sum_{j=0}^\ell \L^n_{ij} v_j
  = \sum_{j=0}^\ell \binom{n}{i-j} \l^{n-(i-j)}  v_j
  = O(n^{\ell}\l^n).
\]
This holds for all~$i$, so we have
\begin{equation}
  \label{eqn:limsuplel}
  \limsup_{n\to\infty} \|\L^n\bfv\|^{1/n}
  \le \limsup_{n\to\infty} \bigl\|O(n^{\ell}\l^n)\bigr\|^{1/n}
  \le |\l|.
\end{equation}
\par
Next, since $\bfv\ne\bfzero$, there is an index $0\le k\le\ell$ such
that
\[
  v_0 = \cdots = v_{k-1} = 0
  \quad\text{and}\quad
  v_k \ne 0.
\]
Hence the $k$'th coordinate of~$\L^n\bfv$ has the form
\[
  (\L^n\bfv)_k 
  = \sum_{j=1}^\ell\L_{kj}^nv_j
  = \sum_{j=0}^\ell \binom{n}{k-j} \l^{n-(k-j)}  v_j
  = \l^nv_k,
\]
where the last equality follows from the facts that $v_j=0$ for $j<k$
and $\binom{n}{k-j}=0$ for $j>k$. This gives
\begin{equation}
  \label{eqn:liminfgel}
  \liminf_{n\to\infty} \|\L^n\bfv\|^{1/n}
  \ge \liminf_{n\to\infty} \bigl|(\L^n\bfv)_k\bigr|^{1/n}
  =  \liminf_{n\to\infty} |\l^nv_k|^{1/n}
  = |\l|,
\end{equation}
where the last equality follows from the fact that $v_k\ne0$.
Combining~\eqref{eqn:limsuplel} and~\eqref{eqn:liminfgel} gives
\[
  \lim_{n\to\infty} \|\L^n\bfv\|^{1/n} = |\l|,
\]
which completes the proof of Lemma~\ref{lemma:jordanmatrixests}.
\end{proof}

\section{Jordan block canonical heights}
\label{section:jordanalgeq}

In this section we construct canonical heights associated to Jordan
blocks in~$\Pic(X)_\CC$. These exist for eigenvalues satisfying
$|\l|>1$.  We also give an estimate that holds for all~$\l$.

\begin{theorem}
\label{theorem:hhatk}
Let $\l\in\CC$, and let $D_0,D_1,\ldots,D_\ell\in\Div(X)_\CC$ be
divisors that form a Jordan block in~$\Pic(X)_\CC$.
\begin{equation}
  \label{eqn:jordanblocks}
  f^*D_0\sim \l D_0,\quad
  f^*D_1\sim D_0 + \l D_1,\quad\ldots\quad
  f^*D_\ell\sim D_{\ell-1} + \l D_\ell.
\end{equation}
\vspace{-12pt}
\begin{parts}
\Part{(a)}
There is a constant $C=C(D_0,\ldots,D_\ell,\l)$ such that
\begin{align*}
  \bigl\|\bfh_\bfD\bigl(f^n(x)\bigr)\bigr\|
  \le  C n^\ell \max \bigl\{ |\l|, & 1 \bigr\}^n
          \cdot \bigl(\bigl\| \bfh_\bfD(x) \bigr\|+1\bigr) \\*
  &\text{for all $x\in X(\Kbar)$ and all $n\ge0$.}
\end{align*}
\Part{(b)}
If $|\l|>1$, then there is a unique function
$\bfhhat_\bfD:X(\Kbar)\to\CC^{\ell+1}$ satisfying the functional
equation
\begin{equation}
  \label{eqn:hhatkfuncteqnzz}
  \bfhhat_\bfD\circ f = \L \bfhhat_\bfD
\end{equation}
and the normalization condition
\begin{equation}
  \label{eqn:bfhhatnormcond}
  \bfhhat_\bfD = \bfh_\bfD + O(1).
\end{equation}
\Part{(c)}
The coordinate functions of the canonical height function
\[
  \bfhhat_\bfD={}^t(\hhat_0,\hhat_1,\ldots,\hhat_\ell)
\]
described in~\textup{(b)} satisfy the limit recursion relations
\begin{equation}
  \label{eqn:hhatkrecurxx}
  \hhat_k(x) = \lim_{n\to\infty} 
   \left(\l^{-n}h_k\bigl(f^n(x)\bigr)
     - \sum_{i=1}^k \binom{n}{i}\l^{-i}\hhat_{k-i}(x)\right).
\end{equation}
\end{parts}
\end{theorem}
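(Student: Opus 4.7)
The first step is to translate the Jordan block linear equivalences~\eqref{eqn:jordanblocks} into a vector-valued Weil height relation. By functoriality of Weil heights, $h_{D_k}\circ f = h_{f^*D_k} + O(1) = h_{D_{k-1}} + \l h_{D_k} + O(1)$, so writing $\bfh_\bfD = {}^t(h_0,\ldots,h_\ell)$ we obtain a uniformly bounded error function $\bfepsilon : X(\Kbar)\to\CC^{\ell+1}$ satisfying
\[
\bfh_\bfD\circ f = \L\bfh_\bfD + \bfepsilon,\qquad \|\bfepsilon(x)\|\le M
\]
for some constant $M=M(D_0,\ldots,D_\ell,\l)$. Iterating this identity yields
\[
\bfh_\bfD\bigl(f^n(x)\bigr) = \L^n\bfh_\bfD(x) + \sum_{i=0}^{n-1}\L^i\bfepsilon\bigl(f^{n-1-i}(x)\bigr),
\]
and part~(a) then follows by taking norms and invoking Lemma~\ref{lemma:jordanmatrixests}(a) together with the estimate $\|B\bfa\|\le\dim(B)\|B\|\|\bfa\|$.

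For part~(b), define the Tate-style limit $\bfhhat_\bfD(x) := \lim_{n\to\infty}\L^{-n}\bfh_\bfD(f^n(x))$. Setting $\bfg_n = \L^{-n}\bfh_\bfD\circ f^n$, the vector equation gives the telescoping identity
\[
\bfg_{n+1}-\bfg_n = \L^{-(n+1)}\bigl(\bfh_\bfD\circ f^{n+1} - \L\bfh_\bfD\circ f^n\bigr) = \L^{-(n+1)}\bigl(\bfepsilon\circ f^n\bigr).
\]
The key ingredient is the decay estimate $\|\L^{-n}\| = O(n^\ell|\l|^{-n})$, which I would establish by writing $\L = \l I + N$ with $N$ nilpotent of index $\le\ell+1$ and expanding
\[
\L^{-n} = \l^{-n}(I+N/\l)^{-n} = \l^{-n}\sum_{j=0}^{\ell}(-1)^j\binom{n+j-1}{j}\l^{-j}N^j,
\]
since each binomial coefficient is of polynomial order in $n$. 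Because $|\l|>1$, the series $\sum n^\ell|\l|^{-n}$ converges, so $(\bfg_n)$ is uniformly Cauchy and the limit $\bfhhat_\bfD$ exists. The normalization $\bfhhat_\bfD = \bfh_\bfD + O(1)$ follows by summing the telescoping bound, while the functional equation $\bfhhat_\bfD\circ f = \L\bfhhat_\bfD$ is immediate from a reindexing of the defining limit. For uniqueness, a difference $\bfg$ of two candidates satisfies $\bfg = \L^{-n}(\bfg\circ f^n)$; boundedness of $\bfg\circ f^n$ together with $\|\L^{-n}\|\to 0$ forces $\bfg\equiv 0$.

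Part~(c) is a coordinate-wise unpacking of~(b). From the computation in the proof of Lemma~\ref{lemma:jordanmatrixests} we have $(\L^n)_{kj} = \binom{n}{k-j}\l^{n-(k-j)}$, so the functional equation gives $\hhat_k\circ f^n = \l^n\sum_{i=0}^{k}\binom{n}{i}\l^{-i}\hhat_{k-i}$ (substituting $i=k-j$). Substituting $\hhat_k\circ f^n = h_k\circ f^n + O(1)$ from the normalization, dividing by $\l^n$, and isolating the $i=0$ term produces~\eqref{eqn:hhatkrecurxx} once the $O(|\l|^{-n})$ error vanishes in the limit. The main obstacle is the decay bound for $\L^{-n}$, since $\L^{-1}$ is not itself a lower Jordan block and so Lemma~\ref{lemma:jordanmatrixests}(a) does not apply directly; one must instead exploit the commutativity of $\l I$ and $N$ and the nilpotence of $N$ to truncate the binomial expansion of $(I+N/\l)^{-n}$. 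Once this estimate is in place, the rest of the argument is a routine adaptation of the classical Tate telescoping construction.
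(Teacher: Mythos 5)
Your proof is correct and follows essentially the same route as the paper: the same bounded error function, the same telescoping sum for (a), the same estimate $\|\L^{-n}\|=O(n^\ell|\l|^{-n})$ (your binomial coefficient $(-1)^j\binom{n+j-1}{j}$ equals the paper's $\binom{-n}{j}$), and the same Tate limit, which agrees term-by-term with the paper's convergent series. The only small divergence is your uniqueness argument, which uses the decay of $\|\L^{-n}\|$ directly rather than invoking Lemma~\ref{lemma:jordanmatrixests}(b); both are valid, and yours is marginally more economical.
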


\begin{remark}
We note that in Theorem~\ref{theorem:hhatk}, if
$D_0,\ldots,D_\ell\in\Div(X)_\RR$ and $\l\in\RR$, then the associated
canonical height functions take values in~$\RR$, i.e.,
$\bfhhat_\bfD:X(\Kbar)\to\RR^{\ell+1}$.
\end{remark}

\begin{remark}
We also note that the proof of Theorem~\ref{theorem:hhatk} does not use
the assumption that the global field~$K$ has characteristic~$0$, so it
in fact holds for any field on which there is a theory of Weil height
functions.
\end{remark}

\begin{remark}
\label{remark:fnctindivhk}
The functional equation for the individual~$\hhat_k\circ f^n$
mentioned in Remark~\ref{remark:iteratefncleqn} follows immediately
from the functional equation~\eqref{eqn:hhatkfuncteqnzz}.  To see
this, we iterate~\eqref{eqn:hhatkfuncteqnzz} to obtain
$\bfhhat_\bfD\circ f^n = \L^n \bfhhat_\bfD$, expand $\L^n=(\l I+N)^n$
using the binomial theorem, and apply the fact (already used in the
proof of Lemma~\ref{lemma:jordanmatrixests} (a)) that
$\L_{ij}^n=\binom{n}{i-j}\l^{n-(i-j)}$.
\end{remark}

\begin{proof}[Proof of Theorem \textup{\ref{theorem:hhatk}}]
We define a vector-valued ``error function''
\[
  \bfE_\bfD:X(\Kbar)\to\CC^{\ell+1},\qquad
  \bfE_\bfD = \bfh_\bfD\circ f - \L \bfh_\bfD.
\]
The assumption~\eqref{eqn:jordanblocks} that~$D_0,\ldots,D_\ell$
form a Jordan block says that each coordinate function of~$\bfE_\bfD$
is a Weil height function with respect to a divisor that is linearly
equivalent to zero. A standard property of Weil height
functions~\cite[Theorem~B.3.2(d)]{hindrysilverman:diophantinegeometry}
then implies that there is a constant~$C_1$ such that
\begin{equation}
  \label{eqn:hDfLhDestlin}
  \bigl\| \bfE_\bfD(x) \bigr\| \le C_1
     \quad\text{for all~$x\in X(\Kbar)$.} 
\end{equation}
\par
We now begin the proof of~(a).  For $N\ge1$ we consider the
telescoping sum
\begin{align*}
  \bfh_\bfD\circ f^N
   &= \L^N \bfh_\bfD + \sum_{n=0}^{N-1} 
          \L^{N-n-1}\bigl(\bfh_\bfD\circ f^{n+1}-\L\bfh_\bfD\circ f^n\bigr)\\*
   &= \L^N \bfh_\bfD + \sum_{n=0}^{N-1} \L^{N-n-1}\bfE_\bfD\circ f^n.
\end{align*}
To ease notation, we let
\[
  |\lplus| = \max\bigl\{|\l|,1\bigr\},
\]
so Lemma~\ref{lemma:jordanmatrixests}(a) says that $\|\L^n\|\le
n^\ell|\lplus|^n$.  For $x\in X(\Kbar)$ we compute
\begin{align*}
  \bigl\| & \bfh_\bfD\bigl(f^N(x)\bigr) \bigr\| \\*
  &\le \bigl\| \L^N \bfh_\bfD(x) \bigr\|
    + \sum_{n=0}^{N-1} \bigl\| \L^{N-n-1}\bfE_\bfD\bigl(f^n(x)\bigr) \bigr\| \\
  &\le (\ell+1) \| \L^N \| \cdot \bigl\| \bfh_\bfD(x) \bigr\| 
    + \sum_{n=0}^{N-1} (\ell+1) \| \L^{N-n-1}\| 
          \cdot \bigl\|\bfE_\bfD\bigl(f^n(x)\bigr) \bigr\| \\
  &\le (\ell+1) N^\ell |\lplus|^N
          \cdot \bigl\| \bfh_\bfD(x) \bigr\|\\*
  &\hspace{2em}{}
    + \smash[b]{ (\ell+1) \sum_{n=0}^{N-1}  (N-n-1)^\ell 
               |\lplus|^{N-n-1}
          \cdot \bigl\|\bfE_\bfD\bigl(f^n(x)\bigr) \bigr\| } \\*
  &\omit\hfill\text{from Lemma~\ref{lemma:jordanmatrixests}(a),} \\
  &\le (\ell+1) N^\ell |\lplus|^N 
         \cdot \bigl\| \bfh_\bfD(x) \bigr\|\\*
  &\hspace{2em}{}
    + (\ell+1) \sum_{n=0}^{N-1}  (N-n-1)^\ell |\lplus|^{N-n-1} C_1
    \quad\text{from \eqref{eqn:hDfLhDestlin},} \\*
  &\le (\ell+1) N^\ell |\lplus|^N
          \cdot \bigl\| \bfh_\bfD(x) \bigr\|
      + C_1 (\ell+1) N^\ell |\lplus|^N .
\end{align*}
This completes the proof of~(a).
\par
We next show that there is at most one vector-valued height
function~$\bfhhat_\bfD$ satisfying the conditions given in~(b).  So we
suppose that~$\bfhhat_\bfD'$ is another such function, we let
$\bfghat_\bfD=\bfhhat_\bfD-\bfhhat_\bfD'$, and we prove by
contradiction that~$\bfghat_\bfD=0$.  So we suppose that there is a
point $x\in X(\Kbar)$ such that~$\bfghat_\bfD(x)\ne0$.
\par
Taking the difference of the functional equations for~$\bfhhat_\bfD$
and~$\bfhhat_\bfD'$ gives a functional equation 
\begin{equation}
  \label{eqn:gfnceq}
  \bfghat_\bfD\circ f=\L \bfghat_\bfD
\end{equation}
for $\bfghat_\bfD$.  This allows us to compute
\begin{align*}   
  |\l|
  &= \limsup_{n\to\infty} \bigl\| \L^n \bfghat_\bfD(x) \bigr\|^{1/n}
    \quad\text{from Lemma~\ref{lemma:jordanmatrixests}(b), 
          since  $\bfghat_\bfD(x)\ne0$,} \\*
  &=\limsup_{n\to\infty} \bigl\| \bfghat_\bfD\bigl(f^n(x)\bigr) \bigr\|^{1/n}
    \quad\text{from \eqref{eqn:gfnceq},} \\
  &=
  \limsup_{n\to\infty} \bigl\| \bfhhat_\bfD\bigl(f^n(x)\bigr)
       - \bfhhat_\bfD'\bigl(f^n(x)\bigr) \bigr\|^{1/n} 
    \quad\text{definition of $\bfghat_\bfD$,}\\
  &\le   \limsup_{n\to\infty} \Bigl(
   \bigl\|\bfhhat_\bfD\bigl(f^n(x)\bigr) - \bfh_\bfD\bigl(f^n(x)\bigr)\bigr\| 
         \hspace{2em}\\*
  &\omit\hfill$\displaystyle
  +\bigl\|\bfhhat_\bfD'\bigl(f^n(x)\bigr) - \bfh_\bfD\bigl(f^n(x)\bigr)\bigr\|
           \Bigr)^{1/n}$ \\*
  &\le  1
    \quad\text{from \eqref{eqn:bfhhatnormcond}.}
\end{align*}
Hence $|\l|\le1$, which contradicts the assumption in~(b) that
$|\l|>1$. This contradiction completes the proof
that~$\bfghat_\bfD=0$, which shows that~$\bfhhat_\bfD$ is uniquely
determined by~\eqref{eqn:hhatkfuncteqnzz}
and~\eqref{eqn:bfhhatnormcond}.
\par
The assumption that~$|\l|>1$ allows us to estimate the norm of the
negative powers of~$\L$ as follows:
\begin{align}
  \label{eqn:absLnegn}
  \|\L^{-n}\|
  &= \bigl\| (\l I+N)^{-n} \bigr\| \notag \\*
  &= \left\| \sum_{i=0}^{\ell} \binom{-n}{i}\l^{-n-i}N^i \right\|
     \quad\text{since $N^{\ell+1}=0$,} \notag \\
  &\le |\l|^{-n} \cdot (\ell+1)
          \cdot \max_{0\le i\le\ell} \left|\binom{-n}{i}\right| \notag \\*
  &\le C_2 n^\ell |\l|^{-n},
\end{align}
where the constant~$C_2$ depends on~$\ell$ and~$\l$, but does not
depend on~$n$.  

\begin{claim}
\label{claim:absconvhhat}
For all $x\in X(\Kbar)$, the vector-valued series
\begin{equation}
  \label{eqn:defbfhhat}
  \bfhhat_\bfD(x) := \bfh_\bfD(x)
       + \sum_{n=0}^\infty \L^{-n-1} \bfE_\bfD\bigl(f^n(x)\bigr)
\end{equation}
is absolutely convergent and defines a vector-valued height function
\[
  \bfhhat_\bfD:X(\Kbar)\to\CC^{\ell+1}
  \quad\text{satisfying}\quad
  \bigl\|\bfhhat_\bfD(x) - \bfh_\bfD(x)\bigr\| \le C
\]
for a constant~$C$ that is independent of~$x$.
\end{claim}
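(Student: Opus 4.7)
The plan is to derive the claim by combining the uniform bound on the error $\bfE_\bfD$ with the decay estimate on the negative powers of $\L$, and observing that the latter outpaces the former since $|\l|>1$. All the ingredients have already been assembled just above the claim, so the proof is essentially a tail estimate on a dominated geometric series.

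First I would fix $x\in X(\Kbar)$ and bound the general term of the series coordinate-wise. Using the triangle inequality
\[
  \bigl\| \L^{-n-1}\bfE_\bfD\bigl(f^n(x)\bigr) \bigr\|
  \le (\ell+1)\,\|\L^{-n-1}\|\cdot\bigl\|\bfE_\bfD\bigl(f^n(x)\bigr)\bigr\|,
\]
the estimate \eqref{eqn:hDfLhDestlin} giving $\|\bfE_\bfD\|\le C_1$ uniformly on $X(\Kbar)$, and the bound \eqref{eqn:absLnegn} giving $\|\L^{-n-1}\|\le C_2(n+1)^\ell|\l|^{-n-1}$, I get
\[
  \bigl\| \L^{-n-1}\bfE_\bfD\bigl(f^n(x)\bigr) \bigr\|
  \le (\ell+1)C_1C_2\,(n+1)^\ell|\l|^{-n-1}.
\]
Since $|\l|>1$, the series $\sum_{n\ge 0}(n+1)^\ell|\l|^{-n-1}$ converges (by the ratio test, or because polynomial growth is dominated by geometric decay), so the tail bound is uniform in $x$.

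Therefore I would set
\[
  C := (\ell+1)C_1C_2\sum_{n=0}^\infty (n+1)^\ell|\l|^{-n-1} < \infty,
\]
conclude that the series in \eqref{eqn:defbfhhat} converges absolutely for every $x\in X(\Kbar)$, and read off directly from the partial-sum estimate that
\[
  \bigl\|\bfhhat_\bfD(x)-\bfh_\bfD(x)\bigr\|
  \le \sum_{n=0}^\infty \bigl\|\L^{-n-1}\bfE_\bfD\bigl(f^n(x)\bigr)\bigr\|
  \le C.
\]
Finally, since $x\mapsto\bfh_\bfD(x)$ and each $x\mapsto\bfE_\bfD(f^n(x))$ is a well-defined function on $X(\Kbar)$, the absolutely convergent sum \eqref{eqn:defbfhhat} defines a bona fide vector-valued function $\bfhhat_\bfD:X(\Kbar)\to\CC^{\ell+1}$, proving the claim.

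There is no real obstacle here; the only subtlety is keeping track of the dimension factor $(\ell+1)$ when turning matrix-norm bounds into vector-norm bounds, but this is exactly what the elementary triangle-inequality estimate $\|B\bfa\|\le\dim(B)\|B\|\|\bfa\|$ introduced in the notation section is designed to handle. The remainder of the proof of Theorem~\ref{theorem:hhatk}(b,c) will then proceed by verifying that the function just constructed satisfies the functional equation \eqref{eqn:hhatkfuncteqnzz}, and by unwinding the definition \eqref{eqn:defbfhhat} together with Remark~\ref{remark:fnctindivhk} to obtain the recursive limit formula \eqref{eqn:hhatkrecurxx}.
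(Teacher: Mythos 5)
Your proof is correct and follows essentially the same route as the paper: bound the general term via the triangle inequality, the uniform estimate $\|\bfE_\bfD\|\le C_1$ from \eqref{eqn:hDfLhDestlin}, and the polynomial-times-geometric bound on $\|\L^{-n-1}\|$ from \eqref{eqn:absLnegn}, then sum the dominating convergent series. The paper's argument is the same tail estimate, just organized as one chained inequality on the whole sum.
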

\begin{proof}
We compute
\begin{align*}
  \sum_{n=0}^\infty \bigl\| \L^{-n-1} \bfE_\bfD\bigl(f^n(x)\bigr) \bigr\|
  &\le (\ell+1) \sum_{n=0}^\infty \|\L^{-n-1}\| 
       \cdot \bigl\|\bfE_\bfD\bigl(f^n(x)\bigr)\bigr\| \\
  &\le C_3 \sum_{n=0}^\infty n^\ell |\l|^{-n}
           \bigl\|\bfE_\bfD\bigl(f^n(x)\bigr)\bigr\| 
    \quad\text{from \eqref{eqn:absLnegn},} \\
  &\le C_3C_1 \sum_{n=0}^\infty n^\ell |\l|^{-n} 
    \quad\text{from \eqref{eqn:hDfLhDestlin},} \\
  &  \le C_4
    \quad\text{since~$|\l|>1$ by assumption.}
\end{align*}
This shows that the series appearing in~\eqref{eqn:defbfhhat} is
absolutely convergent, while simultaneously giving the desired upper
bound for $\bigl\|\bfhhat_\bfD(x) - \bfh_\bfD(x)\bigr\|$, which
completes the proof of the claim.
\end{proof}

Claim~\ref{claim:absconvhhat} gives us a well-defined
function~$\bfhhat_\bfD:X(\Kbar)\to\CC^{\ell+1}$ that satisfies the
normalization condition~\eqref{eqn:bfhhatnormcond}. It remains to
prove that~$\bfhhat_\bfD$ satisfies the functional
equation~\eqref{eqn:hhatkfuncteqnzz}.  Since we know that the series
defining~$\bfhhat_\bfD$ is absolutely convergent, the proof is a
formal calculation using the definitions of~$\bfhhat_\bfD$
and~$\bfE_\bfD$. Thus
\begin{align*}
  \bfhhat_\bfD\circ f
  &= \bfh_\bfD\circ f + \sum_{n=0}^\infty \L^{-n-1} \bfE_\bfD\circ f^{n+1} \\
  &= \bfh_\bfD\circ f + \sum_{n=1}^\infty \L^{-n} \bfE_\bfD\circ f^{n} \\
  &= \bfh_\bfD\circ f - \bfE_\bfD + \sum_{n=0}^\infty \L^{-n} \bfE_\bfD\circ f^{n} \\
  &= \L \bfh_\bfD + \L  \sum_{n=0}^\infty \L^{-n-1} \bfE_\bfD\circ f^{n} \\
  &= \L \bfhhat_\bfD.
\end{align*}
\par\noindent(c)\enspace
We have already proven that the canonical height
function~$\bfhhat_\bfD$ exists and satisfies a normalization condition
and a functional equation. In particular, the normalization condition
implies that
\begin{equation}
  \label{eqn:hfnhfnoln}
  \bfhhat_\bfD\circ f^n = \bfh_\bfD\circ f^n + O(1).
\end{equation}
We compute
\begin{align*}
  \bfhhat_\bfD
  &= \bfhhat_\bfD + \l^{-n}(\bfhhat_\bfD\circ f^n - \L^n \bfhhat_\bfD) 
   \quad\text{from the functional equation,} \\
  &= \l^{-n}\bfh_\bfD\circ f^n - \bigl((\l^{-1}\L)^n - I\bigr)\bfhhat_\bfD
          + \l^{-n}(\bfhhat_\bfD\circ f^n - \bfh_\bfD\circ f^n) \\
  &= \l^{-n}\bfh_\bfD\circ f^n - \bigl((\l^{-1}\L)^n - I\bigr)\bfhhat_\bfD 
     + O(\l^{-n})
    \quad\text{from \eqref{eqn:hfnhfnoln},} \\
  &= \l^{-n}\bfh_\bfD\circ f^n - \bigl((I+\l^{-1}N)^n - I\bigr)\bfhhat_\bfD 
       + O(\l^{-n}) \\
  &\omit\hfill \quad\text{writing $\L=\l I+N$,} \\
  &= \l^{-n}\bfh_\bfD\circ f^n -
      \smash[t]{
         \sum_{i=1}^\ell \binom{n}{i}\l^{-i}N^i\bfhhat_\bfD + O(\l^{-n})
       }
    \quad\text{since $N^{\ell+1}=0$.}
\end{align*}
Evaluating at~$x$ and letting~$n\to\infty$ yields
\begin{equation}
  \label{eqn:hDxnlnbnm}
  \bfhhat_\bfD(x)
  = \lim_{n\to\infty}
  \left( \l^{-n}\bfh_\bfD\bigl(f^n(x)\bigr)
        - \sum_{i=1}^\ell \binom{n}{i}\l^{-i}N^i\bfhhat_\bfD(x) \right).
\end{equation}
Multiplication by the matrix $N$ is the right-shift operator
\[
  {}^t(a_0,\ldots,a_\ell)\longmapsto {}^t(0,a_0,\ldots,a_{\ell-1}),
\]
so the $k^{\text{th}}$-coordinate of $N^i\bfhhat_\bfD(x)$ is
$\hhat_{k-i}$. (By convention we set $\hhat_j=0$ if $j<0$.)  This
shows that the vector formula~\eqref{eqn:hDxnlnbnm} is a succinct way
of writing the formulas~\eqref{eqn:hhatkrecurxx} that we are trying to
prove.
\end{proof}

\section{An example of a Jordan block canonical height}
\label{section:example}

In this section we illustrate Theorem~\ref{section:jordanalgeq}
for~$X=E^2$, where~$E/K$ is a non-CM elliptic curve. In this case, the
N\'eron--Severi group of~$X$ is generated by the three divisors
\[
  H_1=(O)\times E,\quad
  H_2=E\times(O),\quad\text{and}\quad
  \D=\bigl\{(x,x):x\in E\bigr\}.
\]
It is more convenient to take as our basis for~$\NS(X)_\QQ$
the three divisors
\[
  H_1,\quad H_2,\quad\text{and}\quad
  H_3 = H_1+H_2-\D.
\]
We consider an endomorphism of the form
\[
  f:X\longrightarrow X,\qquad
  f(x,y)=(ax+by,ay)
\]
with $a,b\in\ZZ$ satisfying $|a|\ge2$ and~$b\ne0$. An elementary
intersection theory calculation yields the formulas
\begin{align*}
  f^*H_1 &\equiv a^2H_1 + b^2H_2 + abH_3, \\
  f^*H_2 &\equiv a^2H_2,\\
  f^*H_3 &\equiv 2abH_2+a^2H_3,
\end{align*}
from which it is a linear algebra exercise to construct divisors
\[
  D_0 = 4a^3b^2 H_2,\qquad
  D_1 = 2a^2b H_3,\qquad
  D_2 = 2aH_1 - bH_3,
\]
satisfying
\[
  f^*D_0 \equiv a^2D_0,\qquad
  f^*D_1 \equiv D_0+a^2D_1,\qquad
  f^*D_2 \equiv D_1+a^2D_2.
\]
Thus~$D_0,D_1,D_2$ form a Jordan block basis for~$\NS(X)_\RR$, and
their associated Jordan block canonical heights satisfy
\[
  \hhat_{D_0}\circ f = a^2\hhat_{D_0},\quad
  \hhat_{D_1}\circ f = \hhat_{D_0}+a^2\hhat_{D_1},\quad
  \hhat_{D_2}\circ f = \hhat_{D_1}+a^2\hhat_{D_2}.
\]
A short calculation shows that $\hhat_{D_0}$, $\hhat_{D_1}$,
$\hhat_{D_2}$ may be written in terms of the canonical height pairing
on~$E$ as follows:
\begin{align*}
  \hhat_{D_0}(x,y) &= 4a^3b^2 \Tate{y,y}, \\
  \hhat_{D_1}(x,y) &= 2a^2b \Tate{x,y}, \\
  \hhat_{D_2}(x,y) &= \Tate{x,2ax-by}.
\end{align*}

\section{An application to arithmetic degrees} 
\label{section:arithdeg}

In this section we prove Theorem~\ref{theorem:thmA}, which says that
for morphisms $f:X\to X$, the arithmetic degree~$\a_f(x)$ of a point
$x\in X(\Kbar)$ exists, is an algebraic integer, and takes on only
finitely many values as~$x$ ranges over~$X(\Kbar)$.

We recall that the lower and upper arithmetic
degrees~\eqref{eqn:alowaupdefs} are defined, respectively, to be the
liminf and limsup of $\hplus_X\bigl(f^n(x)\bigr)^{1/n}$.  It is proven
in~\cite[Proposition~13]{kawsilv:arithdegledyndeg} that the values
of~$\alower_f(x)$ and~$\aupper_f(x)$ do not depend on the choice of
the ample height function~$h_X$. 

Before beginning the proof of Theorem~\ref{theorem:thmA}, we note that
the equivalence class of Weil height functions associated to a divisor
$D\in\Div(X)_\CC$ consists of complex valued functions
\[
  h_D:X(\Kbar)\longrightarrow\CC
\]
obtained by writing $D$ as a linear combination
\[
  D = c_1D_1+\cdots+c_tD_t
  \quad\text{with $c_i\in\CC$ and $D_i\in\Div(X)$}
\]
and setting
\[
  h_D = c_1h_{D_1}+\cdots+c_th_{D_t}+O(1).
\]
On the other hand, we note that the concept of ample divisor only
makes sense in~$\Div(X)_\RR$, not in~$\Div(X)_\CC$.  Despite this
potential problem, the following helpful lemma uses divisors
in~$\Div(X)_\CC$ to say something about arithmetic degrees, whose
definition requires using a height associated to an ample divisor.

\begin{lemma}
\label{lemma:alowergehDfn}
Let $f:X\to X$ be a morphism, let $D\in\Div(X)_\CC$ by any divisor,
and let  $x\in X(\Kbar)$. Then
\begin{equation}
  \label{eqn:alowbeliminfhD}
   \alower_f(x) \ge \liminf_{n\to\infty} \bigl|h_D\bigl(f^n(x)\bigr)\bigr|^{1/n}.
\end{equation}
\end{lemma}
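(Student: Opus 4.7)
The plan is to reduce the lemma to the uniform height inequality
\[
  \bigl|h_D(y)\bigr| \le C\cdot\hplus_X(y) \qquad \text{for all } y\in X(\Kbar),
\]
where $C=C(D,h_X)$ is a constant depending only on $D$ and $h_X$. Given this bound, the lemma follows quickly: substituting $y=f^n(x)$ and taking $n$-th roots gives $\bigl|h_D(f^n(x))\bigr|^{1/n}\le C^{1/n}\hplus_X(f^n(x))^{1/n}$, and since $C^{1/n}\to 1$, taking $\liminf_{n\to\infty}$ of both sides yields
\[
  \liminf_{n\to\infty}\bigl|h_D(f^n(x))\bigr|^{1/n}
   \le \liminf_{n\to\infty}\hplus_X(f^n(x))^{1/n}
   = \alower_f(x),
\]
using the elementary fact that multiplication by a positive sequence converging to $1$ preserves the liminf of a nonnegative sequence.

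The substantive step is establishing the uniform bound. First I would decompose $D=\sum_{i=1}^{t}c_iD_i$ with $c_i\in\CC$ and $D_i\in\Div(X)$ so that $h_D=\sum c_i h_{D_i}+O(1)$; by linearity it then suffices to show $|h_{D_i}|\le C_i\hplus_X$ for each integral divisor $D_i$. Here I would invoke the standard domination of Weil heights by an ample height: letting $A$ be the ample divisor underlying $h_X$, I can choose $m_i\in\ZZ_{>0}$ large enough that both $m_iA+D_i$ and $m_iA-D_i$ are ample, so their Weil heights are bounded below by constants. Rearranging the two resulting inequalities yields $|h_{D_i}(y)|\le m_i h_X(y)+c_i'$ for a suitable $c_i'$. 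Using $\hplus_X\ge 1$ and $\hplus_X\ge h_X$ then gives $|h_{D_i}(y)|\le(m_i+|c_i'|)\hplus_X(y)$, and summing with the complex weights $c_i$ produces the desired constant $C$.

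The argument contains no real obstacle; the key content is the classical domination principle for Weil heights relative to an ample divisor, which is standard, and the final passage from a pointwise bound to a liminf inequality is immediate from the behavior of $C^{1/n}$ at infinity. The only mild subtlety is that $D$ may have complex coefficients and $h_D$ may be complex-valued, but this is handled transparently by decomposing into integral divisors and invoking linearity.
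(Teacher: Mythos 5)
Your proof is correct, and it takes a genuinely more direct route than the paper's. The paper splits $D=D_1+iD_2$ into real and imaginary parts, picks $\epsilon>0$ small so that $H\pm\epsilon D_1,H\pm\epsilon D_2$ are ample, derives $\max\{h_{H\pm\epsilon D_i}\}\ge\epsilon|h_D|+O(1)$, evaluates along the orbit, and then invokes the (nontrivial, externally cited) fact that $\alower_f(x)$ may be computed with respect to \emph{any} ample height to identify the left-hand liminf with $\alower_f(x)$. It also needs a separate preliminary case distinction depending on whether $|h_D(f^n(x))|\to\infty$, because the additive $O(1)$ error in the paper's inequality prevents directly taking $n$-th roots otherwise. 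You instead establish the clean multiplicative pointwise bound $|h_D|\le C\hplus_X$, absorbing all $O(1)$ error into the constant via $\hplus_X\ge 1$, so both sides are nonnegative and the $n$-th root step goes through with no case split; moreover, since your bound is directly against the $h_X$ defining $\alower_f(x)$, you never need the independence-of-ample-height result. Both arguments ultimately rest on the same ingredient (domination of an arbitrary height by an ample one), but yours reaches the conclusion with fewer moving parts and is self-contained. The only small thing worth making explicit is the usual convention that the ample height $h_X$ is chosen nonnegative (or, failing that, one extra step bounding $|h_X|$ by a multiple of $\hplus_X$), so that the two inequalities $h_{D_i}\ge -m_ih_X-c_i'$ and $h_{D_i}\le m_ih_X+c_i'$ really do combine to $|h_{D_i}|\le m_ih_X+c_i'$; you implicitly use this when you write $\hplus_X\ge h_X$.
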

\begin{proof}
If
\[
  \lim_{n\to\infty} \bigl|h_D\bigl(f^n(x)\bigr)\bigr| \ne \infty,
\]
then the right-hand side of~\eqref{eqn:alowbeliminfhD} is less than or equal
to~$1$, so~\eqref{eqn:alowbeliminfhD} is automatically true, since
the definition of~$\alower_f(x)$ ensures that~$\alower_f(x)\ge1$. We may
thus assume that $\bigl|h_D\bigl(f^n(x)\bigr)\bigr|\to\infty$.
\par
Any two nontrivial norms on~$\CC$ are related by $|z|_1\asymp|z|_2$,
so the right-hand side of~\eqref{eqn:alowbeliminfhD} is independent of
the chosen norm on~$\CC$. We will use the norm
$|a+bi|=\max\bigl\{|a|,|b|\bigr\}$. We write
\[
  D = D_1 + iD_2\quad\text{with $D_1,D_2\in\Div(X)_\RR$.}
\]
We fix an ample divisor $H\in\Div(X)$. Then there is an $\e>0$ so that
the four divisors
\[
  H+\e D_1,\quad H-\e D_1,\quad H+\e D_2,\quad H-\e D_2
\]
are all in the ample cone. Since~$H$ is ample, we may choose a
height function~$h_H$ for~$H$ satisfying $h_H\ge0$.
\par 
Using standard properties of height functions, we estimate
\begin{align*}
  \max\left\{ h_{H+\e D_1}, h_{H-\e D_1} \right\}
  &=  h_H + \e \max\left\{ h_{D_1}, -h_{D_1} \right\} + O(1) \\
  &=  h_H + \e \left| h_{D_1} \right| + O(1) \\
  &\ge  \e \left| h_{D_1} \right| + O(1),
\end{align*}
where the last line follows because $h_H\ge0$. Replacing~$D_1$ with~$D_2$
gives an analogous inequality, so we find that
\begin{align}
  \label{eqn:HpmeDigehD}
  \max\{ h_{H+\e D_1},h_{H-\e D_1}, h_{H+\e D_2}, h_{H-\e D_2}\} 
  &\ge \e \max \left\{ \left| h_{D_1} \right|, \left| h_{D_2} \right| \right\}
     + O(1) \notag\\
  &= \e |h_D| + O(1).
\end{align}
We now evaluate~\eqref{eqn:HpmeDigehD} at~$f^n(x)$, take the
$n^{\text{th}}$-root, and take the liminf as $n\to\infty$. Since the
divisors $H\pm\e D_i$ are all ample, we can use the fact
that~$\alower_f(x)$ may be computed using any ample
divisor~\cite[Proposition~13]{kawsilv:arithdegledyndeg} to deduce that
the left-hand side of~\eqref{eqn:HpmeDigehD} goes
to~$\alower_f(x)$.\footnote{The main theorems
  in~\cite{kawsilv:arithdegledyndeg} assume that~$X$ is smooth and~$f$
  is dominant, but neither of these assumptions is used in the proof
  of~\cite[Proposition~13]{kawsilv:arithdegledyndeg}.}  On the other
hand, since $\e>0$ and $\bigl|h_D\bigl(f^n(x)\bigr)|\to\infty$, we see
that the right-hand side of~\eqref{eqn:HpmeDigehD} is equal to
$\liminf \bigl|h_D\bigl(f^n(x)\bigr)|^{1/n}$.
\end{proof}

\begin{lemma}
\label{lemma:PffstarPicX}
Let $f:X\to X$ be a morphism. Then there is a monic integral polynomial
$P_f(t)\in\ZZ[t]$ with the property that $P_f(f^*)$ annihilates~$\Pic(X)$.
\end{lemma}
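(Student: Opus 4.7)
I would exploit the canonical $f^*$-equivariant short exact sequence
\[
  0 \longrightarrow \Pic^0(X) \longrightarrow \Pic(X) \longrightarrow \NS(X) \longrightarrow 0
\]
to reduce the problem to producing monic integer polynomials annihilating $f^*$ separately on $\NS(X)$ and on $\Pic^0(X)$. Once such polynomials $P_{\NS}(t)$ and $P_{\Pic^0}(t)$ are in hand, their product works: for any $D \in \Pic(X)$ the image of $P_{\NS}(f^*)(D)$ in $\NS(X)$ vanishes, so $P_{\NS}(f^*)(D)$ lies in $\Pic^0(X)$, and then $P_{\Pic^0}(f^*) P_{\NS}(f^*)(D) = 0$. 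Thus I would set $P_f(t) := P_{\Pic^0}(t) \cdot P_{\NS}(t)$.

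For the $\NS(X)$ factor, note that $\NS(X)$ is a finitely generated abelian group, so the action of $f^*$ on $\NS(X)/\NS(X)_{\tors}$ is given by an integer matrix whose characteristic polynomial $Q(t) \in \ZZ[t]$ is monic. Cayley--Hamilton then gives $Q(f^*)(\NS(X)) \subseteq \NS(X)_{\tors}$. Since $\NS(X)_{\tors}$ is finite, the iterates of $f^*$ on it are eventually periodic, so there exist integers $0 \le a < b$ with $(f^*)^a = (f^*)^b$ on $\NS(X)_{\tors}$, and the monic polynomial $t^b - t^a$ annihilates $f^*$ there. I would then take $P_{\NS}(t) := (t^b - t^a) \, Q(t)$, which annihilates $f^*$ on all of $\NS(X)$.

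For the $\Pic^0(X)$ factor, recall that in characteristic zero the Picard variety $A := \Pic^0(X)$ of a normal projective variety is an abelian variety, on which $f^*$ induces an endomorphism. The characteristic polynomial of $f^*$ acting on any $\ell$-adic Tate module $T_\ell(A)$ is a monic polynomial in $\ZZ[t]$ of degree $2 \dim A$, and by the standard theory of abelian varieties it annihilates $f^*$ as an element of $\End(A)$, hence kills the induced action on $A(\Kbar)$. I would take this polynomial as $P_{\Pic^0}(t)$. The only nontrivial ingredient is this appeal to abelian variety theory for the $\Pic^0$ part; the $\NS$ side is a routine application of Cayley--Hamilton plus finiteness of the torsion, and the combination via the exact sequence is purely formal, so I do not anticipate a serious obstacle.
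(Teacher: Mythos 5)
Your proof takes essentially the same route as the paper's: both use the $f^*$-equivariant exact sequence $0 \to \Pic^0(X) \to \Pic(X) \to \NS(X) \to 0$, produce a monic integral annihilator for $f^*$ on $\NS(X)$ from finite generation, produce one for $\Pic^0(X)$ from the characteristic polynomial of the induced endomorphism on a Tate module of the Picard variety, and multiply. You are in fact slightly more careful than the paper on the $\NS$ side, where the paper simply asserts the existence of a monic annihilator from finite generation, while you spell out the Cayley--Hamilton step on the free quotient together with the eventual-periodicity argument on the finite torsion subgroup.
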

\begin{proof}
The Picard group of~$X$ fits into an exact sequence
\[
  0 \longrightarrow A \longrightarrow \Pic(X)
    \longrightarrow \NS(X) \longrightarrow 0,
\]
where~$A=\Pic^0(X)$ is an abelian variety, and where
the N\'eron--Severi group~$\NS(X)$ is a finitely generated abelian
group by the theorem of the base~\cite[Chapter~6,
  Theorem~6.1]{lang:diophantinegeometry}.  In paricular,
since~$\NS(X)$ is finitely generated, there is a monic
$Q_f(t)\in\ZZ[t]$ such that $Q_f(f^*)$ annihilates~$\NS(X)$.
\par
The map $f^*:\Pic(X)\to\Pic(X)$ maps~$\Pic^0(X)$ to itself, and the
resulting map is an endomorphism, which we denote by~$\f_f:A\to A$. 
Let~$R_f(t)\in\ZZ[t]$ to be the characteristic polynomial of~$\f_f$
acting on the Tate module~$T_\ell(A)$. Then~$R_f(\f_f)\in\End(A)$
annihilates all of the $\ell$-power torsion of~$A$, so~$R_f(\f_f)=0$.
Setting $P_f(t)=R_f(t)Q_f(t)$ then gives a monic integral polynomial
satisfying $P_f(f^*)(D)\sim 0$ for all $D\in\Pic(X)$.
\end{proof}

\begin{proof}[Proof of Theorem \textup{\ref{theorem:thmA}}]
Let~$P_f(t)\in\ZZ[t]$ be the monic polynomial from
Lemma~\ref{lemma:PffstarPicX} having the property
that~$P_f(f^*)(D)\sim0$ for all~$D\in\Pic(X)$, and let~$d=\deg(P_f)$.
We fix an ample divisor~$H\in\Div(X)$, and we let
\[
  V = \Span_\QQ\bigl(H,\,f^*H,\,(f^*)^2H,\ldots,(f^*)^{d-1}H\bigr)
  \subset \Pic(X)_\QQ.
\]
Then the fact that $P_f(f^*)(H)\sim0$ implies that~$V$ is
an~$f^*$-invariant subspace of~$\Pic(X)_\QQ$.
We let $\rho=\dim(V)$.
\par
Extending scalars to~$\CC$, we choose divisors
$E_1,\ldots,E_\rho\in\Div(X)_\CC$ whose divisor classes
in~$\Pic(X)_\CC$ form a $\CC$-basis for $V_\CC$ such that the
associated matrix of~$f^*|_V$ is in Jordan normal form. Thus for each
$1\le i\le\rho$, we have either
\[
  f^*E_i \sim \l_i E_i
  \quad\text{or}\quad 
  f^*E_i \sim \l_i E_i + E_{i-1},
\]
where by convention we set $E_0=0$. 
\par
Relabeling the divisors, we may assume that
\begin{equation}
  \label{eqn:l1gel2gelr}
  |\l_1| \ge |\l_2| \ge \cdots \ge |\l_\s| > 1
  \ge |\l_{\s+1}| \ge \cdots \ge |\l_\rho|.
\end{equation}
Theorem~\ref{theorem:hhatk}(d) and
Remark~\ref{eqn:hhatkfuncteqngeneral} tell us that for each $1\le
i\le\s$, there is a canonical height function $\hhat_{E_i}$ having
various useful properties, including
\begin{equation}
  \label{eqn:hhatEihEiO1}
  \hhat_{E_i} = h_{E_i} + O(1)
\end{equation}
and
\begin{equation}
  \label{eqn:hhatEifnsumell}
  \hhat_{E_i}\bigl(f^n(x)\bigr) = \sum_{j=0}^{\ell(i)} 
       \binom{n}{j}\l_i^{n-j}\hhat_{E_{i-j}}(x),
\end{equation}
where~$\ell(i)$ is chosen so that $E_i,E_{i-1},\ldots,E_{i-\ell(i)}$ is the
appropriate piece of the Jordan block that contains~$E_i$.
\par
On the other hand, for $\s<i\le\rho$, taking the $n^{\text{th}}$-root
of Theorem~\ref{theorem:hhatk}(a) and using the fact
that~$|\l_i|\le1$ for these~$i$, we find that
\begin{equation}
  \label{eqn:hEifnllnrho}
  \limsup_{n\to\infty} \bigl|h_{E_i}\bigl(f^n(x)\bigr)\bigr|^{1/n} \le 1
  \quad\text{for all $\s<i\le\rho$.}
\end{equation}
\par
Now take a point~$x\in X(\Kbar)$.
We first consider the case that $\hhat_{E_i}(x)\ne0$ for some $1\le
i\le\s$. We let~$k$ be the smallest such index, so
\begin{equation}
  \label{eqn:hhatEkxne0}
  \hhat_{E_k}(x)\ne0
  \quad\text{and}\quad
  \hhat_{E_{k-1}}(x) = \hhat_{E_{k-2}}(x) = \cdots = \hhat_{E_{1}}(x) = 0.
\end{equation}
Then
\begin{align}
  \label{eqn:hhatEkeqlkn}
  \hhat_{E_k}\bigl(f^n(x)\bigr)
   &= \sum_{j=0}^{\ell(k)} \binom{n}{j} \l_k^{n-j} \hhat_{E_{k-j}}(x) 
     &&\text{from \eqref{eqn:hhatEifnsumell},} \notag\\*
   &= \l_k^n \hhat_{E_k}(x)
     &&\text{from \eqref{eqn:hhatEkxne0}.}
\end{align}
This allows us to estimate 
\begin{align}
  \label{eqn:alowergeabslk}
  \alower_f(x)
  &\ge \liminf_{n\to\infty} \bigl|h_{E_k}\bigl(f^n(x)\bigr)\bigr|^{1/n}
    &&\text{from Lemma \ref{lemma:alowergehDfn},} \notag\\*
  &\ge \liminf_{n\to\infty}
   \Bigl( \bigl| \hhat_{E_k}\bigl(f^n(x)\bigr) \bigr| - O(1) \Bigr)^{1/n}
    &&\text{from \eqref{eqn:hhatEihEiO1},} \notag\\
  &= \liminf_{n\to\infty}
   \Bigl( \bigl| \l_k^n\hhat_{E_k}(x)) \bigr| - O(1) \Bigr)^{1/n}
    &&\text{from \eqref{eqn:hhatEkeqlkn},} \notag\\* 
  &= |\l_k|
    \quad\text{since $|\l_k|>1$ and $\hhat_{E_k}(x)\ne0$.}\hidewidth
\end{align}
\par
In order to find a complementary upper bound, we
recall that we fixed an ample divisor~$H\in\Div(X)$ and used it to
define~$V$. So we can write the divisor class of~$H$ in terms of our
$\CC$-basis for~$V$,
\[
  H \sim c_1E_1+\cdots+c_\rho E_\rho
  \quad\text{with $c_1,\ldots,c_\rho\in\CC$.}
\]
Since $|\l_k|>1$, we can
fix an~$\e$ satisfying $0<\e<|\l_k|-1$ and compute as follows,
where the big-$O$ constants are independent of~$n$:
\begin{align}
  h_H\bigl(f^n(x)\bigr)
  &= \sum_{i=1}^\rho c_i h_{E_i}\bigl(f^n(x)\bigr) +O(1) \notag \\
  &= \sum_{i=1}^\s c_i \hhat_{E_i}\bigl(f^n(x)\bigr) 
        + O(1) 
   + \sum_{\hidewidth i=\s+1\hidewidth}^\r c_i h_{E_i}\bigl(f^n(x)\bigr)
     \quad\text{from \eqref{eqn:hhatEihEiO1},} \notag \\
  &=  \sum_{i=1}^\s c_i \hhat_{E_i}\bigl(f^n(x)\bigr) 
        + O\bigl((1+\e)^{n}\bigr) 
     \quad\text{from \eqref{eqn:hEifnllnrho},}
  \label{eqn:hHupbdmid} \\
  &= \sum_{i=k}^\s c_i \hhat_{E_i}\bigl(f^n(x)\bigr) 
        + O\bigl((1+\e)^n\bigr) 
     \quad\text{from \eqref{eqn:hhatEkxne0},} \notag \\
  &\le O\left(\max_{k\le i\le\s} n^\rho|\l_i^n|\right)  + 
       O\bigl((1+\e)^n\bigr) 
      \quad\text{from \eqref{eqn:hhatEifnsumell},} \notag \\
  &\le O\bigl(n^\rho |\l_k|^n\bigr) + O\bigl((1+\e)^n\bigr) 
      \quad\text{from \eqref{eqn:l1gel2gelr},} \notag \\
  &\le O\bigl(n^\rho |\l_k|^n\bigr)
       \quad\text{since $\e<|\l_k|-1$.} \notag
\end{align}
Hence
\begin{equation}
  \label{eqn:aupperlelk}
  \aupper_f(x) 
  = \limsup_{n\to\infty} h_H\bigl(f^n(x)\bigr)^{1/n}  
  \le \limsup_{n\to\infty} O\bigl(n^\rho |\l_k|^n\bigr)^{1/n} = |\l_k|.
\end{equation}
Combining~\eqref{eqn:alowergeabslk} and~\eqref{eqn:aupperlelk} gives
\[
  |\l_k| \le \alower_f(x) \le \aupper_f(x) \le |\l_k|,
\]
which completes the proof that if $\hhat_{E_k}(x)\ne0$, then 
\[
  \a_f(x) = \lim_{n\to\infty} h_H\bigl(f^n(x)\bigr)^{1/n} = |\l_k|.
\]
\par
It remains to deal with the case that
\[
  \hhat_{E_1}(x) = \cdots = \hhat_{E_\s}(x) = 0,
\]
which using~\eqref{eqn:hhatEifnsumell} implies that
\begin{equation}
  \label{eqn:hhatE1Es0}
  \hhat_{E_1}\bigl(f^n(x)\bigr) = \cdots = \hhat_{E_\s}\bigl(f^n(x)\bigr) = 0
  \quad\text{for all $n\ge0$.}
\end{equation}
Substituting~\eqref{eqn:hhatE1Es0} into the earlier calculation of
$h_H\bigl(f^n(x)\bigr)$, more specifically into the line
labeled~\eqref{eqn:hHupbdmid}, we find that 
\[
  h_H\bigl(f^n(x)\bigr) = O\bigl((1+\e)^n\bigr).
\]
So taking $n^{\text{th}}$-roots and letting $n\to\infty$ gives
\[
  \aupper_f(x) = \limsup_{n\to\infty} h_H\bigl(f^n(x)\bigr)^{1/n} \le 1+\e,
\]
and since this holds for all~$\e>0$, we find that
\[
  \aupper_f(x) \le 1.
\]
But it is clear from the definition that $\alower_f(x)\ge1$, so we
conclude in this case that the limit defining $\a_f(x)$ exists and is
equal to~$1$.
\par
This completes the proof that the limit defining the arithemtic degree
exists. Further, we have shown that either~$\a_f(x)=1$, or
else~$\a_f(x)$ is equal to the absolute value of one of the
eigenvalues of~$f^*$ acting on the finite
dimensional vector space~$V\subset\Pic(X)_\QQ$.
But we also know that~$P_f(f^*)$ annihilates~$\Pic(X)_\QQ$, so the
minimal polynomial of~$f^*|_V$ must divide~$P_f(t)$, and in any case,
the eigenvalues of~$f^*|_V$ are roots of~$P_f(t)$. Hence
\[
  \bigl\{\a_f(x) : x\in X(\Kbar)\bigr\}
  \subset \{1\} 
    \cup \bigl\{|\l| : \text{$\l\in\CC$ is a root of $P_f(t)$}\bigr\},
\]
which simultaneously shows that~$\a_f(x)$ is an algebraic integer and
that~$\a_f(x)$ takes on only finitely many distinct values as~$x$ ranges
over~$X(\Kbar)$.
\par
This completes the proof of Theorem~\ref{theorem:thmA}, but we also
remark that with somewhat more work, one can show that one need only
consider eigenvalues of~$f^*:\NS(X)_\QQ\to\NS(X)_\QQ$; see
Remark~\ref{remark:afxfromNS} for further details.
\end{proof}

\section{Canonical heights for algebraic equivalence polarizations}
\label{section:algequivhts}
In this section we state an analogue of Theorem~\ref{theorem:thmB} in
which the linear equivalences are replaced by algebraic equivalences.
We omit the proof which, \emph{mutatis mutandis}, follows the proof of
Theorem~\ref{theorem:thmB}.  

\begin{theorem}
\label{theorem:thmC}
Let $X/K$ be a normal projective variety, let $f:X\to X$ be a
$K$-morphism, let $\l\in\CC$, and let
\[
  X^{(\l)}(\Kbar) = \bigl\{ x\in X(\Kbar) : \a_f(x)<|\l|^2 \bigr\}.
\]
Let
$D_0,D_1,\ldots,D_\ell\in\Div(X)_\CC$ 
be divisors that form a Jordan
block with eigenvalue~$\l$ for the linear
transformation~$f^*:\NS(X)_\CC\to\NS(X)_\CC$, i.e.,
\begin{equation}
  \label{eqn:jordanblocksintro}
  f^*D_0\equiv \l D_0,\quad
  f^*D_1\equiv D_0 + \l D_1,\quad\ldots\quad
  f^*D_\ell\equiv D_{\ell-1} + \l D_\ell,
\end{equation}
where~$\equiv$ denotes \emph{algebraic} equivalence.  Then there is a
unique function $\bfhhat_\bfD:X^{(\l)}(\Kbar)\to\CC^{\ell+1}$
satisfying the functional equation
\[
  \bfhhat_\bfD\circ f = \L \bfhhat_\bfD
\]
and the normalization condition
\begin{equation}
  \label{eqn:weakalgequivnormcond}
  \limsup_{n\to\infty} 
  \Bigl\| \bfhhat_{\bfD}\bigl(f^n(x)\bigr)
      - \bfh_{\bfD}\bigl(f^n(x)\bigr)\Bigr\|^{1/n}
  \le \a_f(x)^{1/2}.
\end{equation}
The coordinate functions of $\bfhhat_\bfD$ satisfy the
recursion relations~\eqref{eqn:hhatrecursdef} stated in
Theorem~$\ref{theorem:thmB}$.
\end{theorem}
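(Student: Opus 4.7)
The plan is to mimic the proof of Theorem~\ref{theorem:hhatk}, making two changes that reflect the weakening of linear equivalence to algebraic equivalence. As before, define the vector-valued error function $\bfE_\bfD = \bfh_\bfD\circ f - \L\bfh_\bfD$. The hypotheses~\eqref{eqn:jordanblocksintro} now say that each coordinate of $\bfE_\bfD$ is a Weil height for a divisor that is algebraically equivalent to zero. The standard consequence of the theory of heights on the Picard variety (via the Albanese map and the quadraticity of the Poincar\'e pairing) is that, for an ample divisor $H\in\Div(X)$, there is a constant $C_1$ with
\[
  \bigl\|\bfE_\bfD(x)\bigr\| \le C_1 \bigl(h_H(x)+1\bigr)^{1/2}
  \quad\text{for all $x\in X(\Kbar)$.}
\]
This is the substitute for the $O(1)$ bound~\eqref{eqn:hDfLhDestlin} used in the linear equivalence case, and it is the key input that forces the restriction to the subset $X^{(\l)}(\Kbar)$.

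Next, for $x\in X^{(\l)}(\Kbar)$ the arithmetic degree $\a_f(x)$ exists by Theorem~\ref{theorem:thmA}, so for any $\e>0$ sufficiently small we have $\a_f(x)+\e < |\l|^2$ and $h_H\bigl(f^n(x)\bigr)\le C_\e(\a_f(x)+\e)^n$ for $n$ large. Combining this with the bound $\|\L^{-n-1}\|\le C_2 n^\ell |\l|^{-n}$ from~\eqref{eqn:absLnegn} gives
\[
  \bigl\|\L^{-n-1}\bfE_\bfD\bigl(f^n(x)\bigr)\bigr\|
  \le C_3\, n^\ell
    \left(\frac{(\a_f(x)+\e)^{1/2}}{|\l|}\right)^{\!n},
\]
which is summable. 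Hence the series
\[
  \bfhhat_\bfD(x) := \bfh_\bfD(x)
   +\sum_{n=0}^\infty \L^{-n-1}\bfE_\bfD\bigl(f^n(x)\bigr)
\]
converges absolutely on $X^{(\l)}(\Kbar)$. Since $\a_f\circ f = \a_f$, the set $X^{(\l)}(\Kbar)$ is $f$-invariant, and the same telescoping manipulation used in the proof of Theorem~\ref{theorem:hhatk} then shows that $\bfhhat_\bfD\circ f = \L\bfhhat_\bfD$.

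To verify the weak normalization~\eqref{eqn:weakalgequivnormcond}, observe that the tail bound
\[
  \bigl\|\bfhhat_\bfD\bigl(f^n(x)\bigr) - \bfh_\bfD\bigl(f^n(x)\bigr)\bigr\|
  \le \sum_{m=0}^\infty
      \bigl\|\L^{-m-1}\bfE_\bfD\bigl(f^{n+m}(x)\bigr)\bigr\|
\]
and the same estimate applied at $f^{n+m}(x)$ yield an upper bound of $C_4(\a_f(x)+\e)^{n/2}$; taking $n$-th roots and letting $\e\downarrow 0$ gives the limsup bound. For uniqueness, if $\bfhhat_\bfD'$ is a second solution, then $\bfghat:=\bfhhat_\bfD-\bfhhat_\bfD'$ satisfies $\bfghat\circ f = \L\bfghat$, so if $\bfghat(x)\ne 0$ then Lemma~\ref{lemma:jordanmatrixests}(b) forces $\|\L^n\bfghat(x)\|^{1/n}\to|\l|$, while the normalization applied to both $\bfhhat_\bfD$ and $\bfhhat_\bfD'$ gives $\limsup\|\bfghat(f^n(x))\|^{1/n}\le \a_f(x)^{1/2}<|\l|$, a contradiction. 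Finally, the recursion formulas~\eqref{eqn:hhatrecursdef} follow exactly as in the proof of Theorem~\ref{theorem:hhatk}(c), since the only property of the normalization used there is that $\l^{-n}\bigl(\bfhhat_\bfD\circ f^n - \bfh_\bfD\circ f^n\bigr) \to 0$, which remains valid here because $(\a_f(x)+\e)^{1/2}/|\l|<1$.

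The main obstacle is the first step: establishing the $O\bigl((h_H+1)^{1/2}\bigr)$ bound for heights of divisors algebraically equivalent to zero. This is the precise quantitative replacement for the trivial $O(1)$ bound in the linearly equivalent case, and it is exactly what determines the threshold $|\l|^2$ appearing in the definition of $X^{(\l)}(\Kbar)$. Everything else is a routine adaptation of the proof of Theorem~\ref{theorem:hhatk}, with the constants $C_1,C_3,C_4$ now depending on $x$ through $\a_f(x)$, but the convergence rates arranged so that all series still converge.
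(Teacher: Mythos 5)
Your proposal is correct and is precisely the argument the paper has in mind: the paper omits the proof of Theorem~\ref{theorem:thmC} but states explicitly (in the remark following it) that it follows Theorem~\ref{theorem:thmB} \emph{mutatis mutandis} once the $O(1)$ bound on the error function is replaced by the Néron--Tate estimate $h_D = O\bigl((\hplus_X)^{1/2}\bigr)$ for $D\equiv 0$, and your write-up fills in exactly that adaptation, correctly identifying how the threshold $|\l|^2$ and the weak normalization~\eqref{eqn:weakalgequivnormcond} arise. One tiny quibble: the constant $C_1$ in your error-function bound comes from the height machine and does \emph{not} depend on $x$; it is only the later constants ($C_\e$, $C_3$, $C_4$) that pick up a dependence on $x$ through $\a_f(x)$, which is what forces the normalization to be the weaker limsup statement rather than a uniform $O(1)$.
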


\begin{remark}
If a divisor $D$ is algebraically equivalent to~$0$, i.e., $D\equiv0$,
then a classical height
estimate~\cite[Chapter~4,~Corollary~3.4]{lang:diophantinegeometry}
says that $h_D=o(\hplus_X)$.  This estimate is not strong enough to
prove Theorem~\ref{theorem:thmC}.  Instead one uses the stronger
estimate $h_D=O\bigl((\hplus_X)^{1/2}\bigr)$, which follows from the
N\'eron--Tate theory of canonical heights on abelian varieties;
see~\cite[Theorem~B.5.9]{hindrysilverman:diophantinegeometry}.
\end{remark}

\begin{remark}
\label{remark:hhatiflgtdf12}
Continuing with algebraic equivalence relations as in
Theorem~\ref{theorem:thmC}, if we further assume that~$f$ is dominant
and that the eigenvalue is sufficiently large, then we can obtain a
stronger result. More precisely, if the eigenvalue~$\l$ satisfies
$|\l|>\d_f^{1/2}>1$, where~$\d_f$ is the dynamical degree of~$f$,
then~\cite{kawsilv:arithdegledyndeg} tells us that
$X^{(\l)}(\Kbar)=X(\Kbar)$, and the weak normalization
condition~\eqref{eqn:weakalgequivnormcond} in
Theorem~\ref{theorem:thmC} may be replaced by the stronger condition
\[
  \bfhhat_\bfD = \bfh_\bfD + O\left( (\hplus_X)^{1/2} \right).
\]
The proof, which we omit, again follows the lines of the proof of
Theorem~\ref{theorem:thmC}, but uses a key height inequality proven
in~\cite{kawsilv:arithdegledyndeg}. (The reason that we require~$f$ to
be dominant here is because this is assumed
in~\cite{kawsilv:arithdegledyndeg}.)
\end{remark}

\begin{remark}
\label{remark:afxfromNS}
Our proof of Theorem~\ref{theorem:thmA} shows that either~$\a_f(x)=1$,
or else $\a_f(x)$ is equal to the absolute value
of an eigenvalue of the linear transformation
\[
  f^*:\Pic(X)_\QQ\to\Pic(X)_\QQ.
\]
Lemma~\ref{lemma:PffstarPicX} implies that this set of eigenvalues is
finite.  However, using Theorem~\ref{theorem:thmC} and suitably
modifying the proof of Theorem~\ref{theorem:thmA}, one can show that
in fact~$\a_f(x)$ is either~$1$ or the absolute value of an eigenvalue
of $f^*:\NS(X)_\QQ\to\NS(X)_\QQ$, so there is in fact no need to
consider the eigenvalues coming from the action of~$f^*$
on~$\Pic^0(X)_\QQ$.
\end{remark}

\section{Nef heights and arithmetic degrees for endomorphisms 
of abelian varieties}
\label{section:arithdegabelianvar}

Our primary goal in this section is to prove that for an abelian
variety~$A$, an isogeny~$f:A\to A$, and a point~$x\in A$, all defined
over~$\Qbar$, if the $f$-orbit~$\Orbit_f(x)$ of~$x$ is Zariski dense
in~$A$, then $\a_f(x)=\d_f$. Along the way we describe the set of
points satisfying $\qhat_{A,D}(x)=0$ for a nonzero nef divisor~$D$,
generalizing the classical result for ample divisors. We will make
extensive use of the geometry of abelian varieties and their
N\'eron--Severi groups and endomorphism algebras as described
in~\cite[Sections~19--21]{MR0282985}.

For this section we set the following notation:

\begin{notation}
\item[$A/\Qbar$]
an abelian variety defined over $\Qbar$
\item[$\hat A$]
the dual of the abelian variety~$A$, i.e., $\hat A=\Pic^0(A)$.
\item[$\d_f$]
the dynamical degree of an endomorphism $f\in\End(A)$, which by
definition is the spectral radius of the induced map $f^*$ on
$\NS(A)_\CC$.
\item[$H$]
an ample divisor on $A$.
\item[$H_{\a,\b}$]
For $\a,\b\in\End(A)_\QQ$, the divisor
\[
  \hspace*{6em}
  H_{\a,\b} = (\a\pi_1+\b\pi_2)^*H - (\a\pi_1)^*H - (\b\pi_2)^*H
  \in \Div(A^2)_\QQ,
\]
where $\pi_1,\pi_2:A\times A\to A$ are the projection maps.
\item[$\f_D$]
For a divisor class $[D]\in\NS(A)_\QQ$, the map
\[
  \hspace*{4em}
  \f_D:A\longrightarrow\hat A=\Pic^0(A),\qquad
  \f_D(x) = [T_x^*D - D],
\]
where $T_x:A\to A$ is the translation-by-$x$ map;
see~\cite[page~60]{MR0282985}.
\item[$\F$]
The inclusion 
\begin{equation}
  \label{eqn:FHSAtoEndA}
  \hspace*{4em}
  \F : \NS(A)_\QQ \longhookrightarrow \End(A)_\QQ,\qquad
  \F_D = \f_H^{-1}\circ\f_D,
\end{equation}
induced by the ample divisor~$H$;
see~\cite[pages 190, 208]{MR0282985}.
\item[$\qhat_{A,D}$]
the quadratic part
of the canonical height on $A(\Qbar)$ relative to the divisor~$D$,
defined by $\qhat_{A,D}(x)=\lim n^{-2}h_{A,D}(nx)$.
\item[$\langle\,\cdot\,,\,\cdot\,\rangle_{A,D}$]
the associated height pairing on~$A(\Qbar)$, defined by
\[
  \hspace*{4em}
  \langle x,y\rangle_{A,D}=\qhat_{A,D}( P+Q)-\qhat_{A,D}( P)-\qhat_{A,D}( Q).
\]
We extend the pairing $\RR$-linearly to~$A(\Kbar)_\RR$.
\item[$\hat\a$]
the induced map $\hat\a:\hat B\to\hat A$ for
a homomorphism~$\a:A\to B$ of abelian varieties.
\item[$\a'$]
the Rosati involution on~$A$ associated to~$H$, defined by
\begin{equation}
  \label{eqn:rosatidef}
  \hspace*{4em}
  \a' = \f_H^{-1}\circ \hat\a\circ \f_H,
  \quad\text{where $\a\in\End(A)$.}
\end{equation}
\end{notation}

We begin with a number of geometric results that will be used
as input to the height machinery.

\begin{lemma}
\label{lemma:ffDhatffDfx}
Let $A$ and $B$ be abelian varieties, let
$f:B\to A$ be an isogeny, and let $D\in\NS(B)_\QQ$. Then
\begin{equation}
  \label{eqn:ffstardx}
  \f_{f^*D} = \hat f \circ \f_D \circ f.
\end{equation}
If further~$A=B$ and we fix an ample divisor~$H$ to define the
inclusion~\eqref{eqn:FHSAtoEndA} and the Rosati
involution~\eqref{eqn:rosatidef}, then
\begin{equation}
  \label{eqn:FastarDaFDa}
  \F_{\a^*D} = \a' \circ \F_D \circ \a
  \quad\text{for all $\a\in\End(A)_\QQ$.}
\end{equation}
\end{lemma}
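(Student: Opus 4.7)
The plan is to derive both identities directly from the definitions given in the notation block, with the only nontrivial ingredient being the standard identification (see~\cite{MR0282985}) of the restriction of the pullback $f^*:\Pic(A)\to\Pic(B)$ to $\Pic^0(A)=\hat A$ with the dual isogeny $\hat f:\hat A\to\hat B$. Everything else is a formal manipulation of the defining formula $\f_D(y)=[T_y^*D - D]$ (where here, and consistent with type-checking, I treat $D$ as a class on~$A$ so that $f^*D\in\NS(B)_\QQ$ and both sides of \eqref{eqn:ffstardx} are maps $B\to\hat B$).

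For~\eqref{eqn:ffstardx}, the key observation is that since $f:B\to A$ is a group homomorphism, translation is compatible in the sense $f\circ T_y = T_{f(y)}\circ f$ for all $y\in B$. Applying the defining formula for~$\f_{f^*D}$ and using this compatibility yields
\[
  \f_{f^*D}(y)
  = [T_y^*f^*D - f^*D]
  = [f^*T_{f(y)}^*D - f^*D]
  = f^*\bigl[T_{f(y)}^*D - D\bigr]
  = f^*\f_D\bigl(f(y)\bigr).
\]
Since the bracketed class lies in~$\Pic^0(A)$, the map $f^*$ on this subgroup is by definition the dual isogeny~$\hat f$, so the right-hand side equals $\bigl(\hat f\circ\f_D\circ f\bigr)(y)$, proving~\eqref{eqn:ffstardx}.

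For~\eqref{eqn:FastarDaFDa}, I would simply plug \eqref{eqn:ffstardx} (applied with $f=\a$ and $A=B$) into the definition $\F_{\a^*D}=\f_H^{-1}\circ\f_{\a^*D}$, and then insert a factor of $\f_H\circ\f_H^{-1}$ in the middle to expose both~$\a'$ and~$\F_D$:
\[
  \F_{\a^*D}
  = \f_H^{-1}\circ\hat\a\circ\f_D\circ\a
  = \bigl(\f_H^{-1}\circ\hat\a\circ\f_H\bigr)\circ\bigl(\f_H^{-1}\circ\f_D\bigr)\circ\a
  = \a'\circ\F_D\circ\a,
\]
using the definition \eqref{eqn:rosatidef} of the Rosati involution and the factorization $\F_D=\f_H^{-1}\circ\f_D$. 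There is essentially no obstacle: once~\eqref{eqn:ffstardx} is in hand, \eqref{eqn:FastarDaFDa} is purely algebraic, and the only conceptual input in~\eqref{eqn:ffstardx} itself is the identification of~$f^*\!\!\restriction_{\Pic^0(A)}$ with~$\hat f$.
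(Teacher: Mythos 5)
Your proof is correct and follows the paper's argument almost verbatim: compute $\f_{f^*D}$ from the definition, use $f\circ T_y = T_{f(y)}\circ f$ to move the translation past $f^*$, identify $f^*$ on $\Pic^0$ with $\hat f$, and then obtain~\eqref{eqn:FastarDaFDa} by inserting $\f_H\circ\f_H^{-1}$. (You also correctly read $D\in\NS(A)_\QQ$ rather than $\NS(B)_\QQ$, which is evidently a typo in the statement as printed, since otherwise $f^*D$ would not be defined for $f:B\to A$.)
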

\begin{proof}
For $x\in B$ we compute
\begin{align*}
  \f_{f^*D}(x)
  &= \bigl[T_x^*(f^*D)-f^*D\bigr] \\
  &= \bigl[f^*(T_{f(x)}^*D)-f^*D\bigr] \\
  &= f^*\bigl[T_{f(x)}^*D-D\bigr] \\
  &= \hat f\bigl(\f_D\bigl(f(x)\bigr)\bigr).
\end{align*}
Hence $\f_{f^*D} = \hat f \circ \f_D \circ f$, which
proves~\eqref{eqn:ffstardx}. 
\par
In the case that~$A=B$, we have
\begin{align*}
  \F_{\a^*D} 
  &= \f_H^{-1}\circ\f_{\a^*D}
    \quad\text{definition of $\F$, see~\eqref{eqn:FHSAtoEndA},} \\
  &= \f_H^{-1}\circ\hat\a\circ\f_{D}\circ\a
    \quad\text{from \eqref{eqn:ffstardx},} \\
  &= \f_H^{-1}\circ\hat\a\circ\f_H\circ\f_H^{-1}\circ\f_{D}\circ\a \\
  &= \a'\circ\F_D\circ\a
    \quad\text{definitions~\eqref{eqn:FHSAtoEndA}
     and~\eqref{eqn:rosatidef} of $\F$ and Rosati,}
\end{align*}
which completes the proof of~\eqref{eqn:FastarDaFDa}.
\end{proof}

\begin{lemma}
\label{lemma:HabHbarbbara}
Let $\a\in\End(A)_\RR$. Then
\[
  H_{\a,1}\equiv H_{1,\a'} \quad\text{in $\NS(A^2)_\RR$.}
\]
\end{lemma}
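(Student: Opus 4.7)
Since two divisor classes in $\NS(A^2)_\QQ$ are equal if and only if their associated maps $\f_{(\cdot)}:A^2\to\widehat{A^2}=\hat A^2$ agree, my plan is to compute $\f_{H_{\a,1}}$ and $\f_{H_{1,\a'}}$ explicitly and verify equality. Extension from rational to real $\a$ follows by linearity, since $H_{\a,\b}$ and the Rosati involution are both $\RR$-linear in their arguments.

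First I would compute $\f_{H_{\a,\b}}$ for arbitrary $\a,\b\in\End(A)_\QQ$. Set $\s_{\a,\b}:A^2\to A$, $(x,y)\mapsto \a x+\b y$, whose dual is $\hat\s_{\a,\b}:\hat A\to\hat A^2$, $\xi\mapsto(\hat\a(\xi),\hat\b(\xi))$. Applying Lemma~\ref{lemma:ffDhatffDfx}~\eqref{eqn:ffstardx} to each of the three pullbacks in the definition of $H_{\a,\b}$, and using that $\f_H$ is a homomorphism so that $\f_H(\a x+\b y)=\f_H\a(x)+\f_H\b(y)$, the diagonal terms cancel and one obtains
\[
  \f_{H_{\a,\b}}(x,y)=\bigl(\hat\a\f_H\b(y),\ \hat\b\f_H\a(x)\bigr).
\]

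Specializing to $(\a,\b)=(\a,1)$ and to $(\a,\b)=(1,\a')$ respectively yields
\[
  \f_{H_{\a,1}}(x,y)=\bigl(\hat\a\f_H(y),\ \f_H\a(x)\bigr),
  \qquad
  \f_{H_{1,\a'}}(x,y)=\bigl(\f_H\a'(y),\ \hat{\a'}\f_H(x)\bigr).
\]
Equality of these two maps reduces to the two identities
\[
  \hat\a\circ\f_H=\f_H\circ\a'
  \quad\text{and}\quad
  \f_H\circ\a=\hat{\a'}\circ\f_H.
\]
The first is simply the definition \eqref{eqn:rosatidef} of the Rosati involution. The second I would obtain by dualizing the first, using the standard symmetry $\widehat{\f_H}=\f_H$ (valid for any divisor, since $\f_D$ is a symmetric homomorphism) and the involutivity $\a''=\a$ of Rosati.

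The only real obstacle is careful bookkeeping of the pullback and dual operations in the computation of $\f_{H_{\a,\b}}$; everything else is a formal consequence of Lemma~\ref{lemma:ffDhatffDfx} together with the definition of $\a'$.
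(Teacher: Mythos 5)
Your proposal is correct and follows essentially the same route as the paper's proof: you compute $\f_{H_{\a,\b}}$ via the pullback formula of Lemma~\ref{lemma:ffDhatffDfx}, specialize to $(\a,1)$ and $(1,\a')$, identify the resulting maps using the Rosati relation $\hat\a\circ\f_H=\f_H\circ\a'$ together with $\a''=\a$, and conclude by the injectivity of $D\mapsto\f_D$ on $\NS(A^2)_\RR$. The paper simply applies the Rosati rewriting inside the general formula for $\f_{H_{\a,\b}}$ before specializing, while you specialize first and then invoke the Rosati identity twice (the second instance being the first applied to $\a'$); these are cosmetic differences.
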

\begin{proof}
We remark that\footnote{This is fairly standard.
  Briefly,~\cite[Proposition~A.7.3.2]{hindrysilverman:diophantinegeometry}
  says that $(\pi_1+\pi_2)^*=\pi_1^*+\pi_2^*$ on~$\Pic^0(A)$, so
  $(\a+\b)^*=\bigl((\pi_1+\pi_2)\circ(\a\times\b)\bigr)^* =
  (\a\times\b)^*\circ(\pi_1+\pi_2)^* =
  (\a\times\b)^*\circ(\pi_1^*+\pi_2^*) =
  \bigl(\pi_1\circ(\a\times\b)\bigr)^*+\bigl(\pi_2\circ(\a\times\b)\bigr)^*
  = \a^*+\b^*$ on~$\Pic^0(A)$.}
\begin{equation}
  \label{eqn:ahataishom}
  \text{$\a\to\hat\a$\quad is a ring homomorphism\quad
     $\End(A)\to\End(\hat A)$.}
\end{equation}
We let~$\a,\b\in\End(A)_\RR$ and compute more generally
\begin{align*}
  \f_{H_{\a,\b}}
  &= \f_{(\a\pi_1+\b\pi_2)^*H} - \f_{(\a\pi_1)^*H} - \f_{(\b\pi_2)^*H}
    \quad\text{definition of $H_{\a,\b}$,} \\
  &= (\widehat{\a\pi_1+\b\pi_2})\circ\f\circ(\a\pi_1+\b\pi_2)
     - (\widehat{\a\pi_1})\circ\f\circ(\a\pi_1) \\*
  &\omit\hfill\qquad${} - (\widehat{\b\pi_2})\circ\f\circ(\b\pi_2)$
  \quad\text{from Lemma~\ref{lemma:ffDhatffDfx},  
                  equation~\eqref{eqn:ffstardx},} \\
  &= (\widehat{\a\pi_1}+\widehat{\b\pi_2})\circ\f\circ(\a\pi_1+\b\pi_2)
     - (\widehat{\a\pi_1})\circ\f\circ(\a\pi_1) \\*
  &\omit\hfill\qquad${} - (\widehat{\b\pi_2})\circ\f\circ(\b\pi_2)$
    \quad\text{from \eqref{eqn:ahataishom},} \\
  &= \widehat{\a\circ\pi_1}\circ\f_H\circ\b\circ\pi_2
   + \widehat{\b\circ\pi_2}\circ\f_H\circ\a\circ\pi_1 \\
  &= \hat\pi_1\circ\hat\a\circ\f_H\circ\b\circ\pi_2
   + \hat\pi_2\circ\hat\b\circ\f_H\circ\a\circ\pi_1 \\
  &= \hat\pi_1\circ\f_H\circ\a'\circ\b\circ\pi_2
   + \hat\pi_2\circ\f_H\circ\b'\circ\a\circ\pi_1 \\*
  &\omit\hfill\text{definition of the Rosati involution.}
\end{align*}
Hence
\[
  \f_{H_{\a,1}}
  = \hat\pi_1\circ\f_H\circ\a'\circ\pi_2
   + \hat\pi_2\circ\f_H\circ\a\circ\pi_1
\]
and
\[
  \f_{H_{1,\a'}}
  = \hat\pi_1\circ\f_H\circ\a'\circ\pi_2
   + \hat\pi_2\circ\f_H\circ\a''\circ\pi_1.
\]
Since $\a''=\a$, this shows that $\f_{H_{\a,1}}=\f_{H_{1,\a'}}$. To
complete the proof that $H_{\a,1}\equiv H_{1,\a'}$, it suffices to
note that the map
\[
  \NS(X)_\RR\to\End(X)_\RR,
  \quad D\mapsto\f_D,
\]
is injective~\cite[page~208]{MR0282985}. 
\end{proof}

\begin{proposition}
\label{proposition:DnefiffFda2}
Let $D\in\NS(A)_\RR$ be a nef divisor.
Then there is an endomorphism $\a\in\End(A)_\RR$ satisfying
\[
  \F_D = \a'\circ\a \quad\text{and}\quad \a'=\a.
\]
\end{proposition}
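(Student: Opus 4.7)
The plan is to realize $\F_D$ as a positive semi-definite Rosati-symmetric element of the finite-dimensional semisimple $\RR$-algebra $\End(A)_\RR$, and then extract a self-adjoint square root via Albert's classification of semisimple $\RR$-algebras with positive involution.

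First, I would verify that $(\F_D)' = \F_D$ for every $D\in\NS(A)_\RR$. This is the standard compatibility of $\f_D$ with the Rosati involution: using $\widehat{\f_D} = \f_D$ under the canonical identification $A \simeq \hat{\hat A}$ (and hence $\widehat{\f_H^{-1}} = \f_H^{-1}$ applied to $H$), the definitions $\F_D = \f_H^{-1}\circ\f_D$ and $\a' = \f_H^{-1}\circ\hat\a\circ\f_H$ give $(\F_D)' = \f_H^{-1}\circ\hat\f_D\circ\widehat{\f_H^{-1}}\circ\f_H = \f_H^{-1}\circ\f_D = \F_D$.

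Next, by Albert's classification the pair $(\End(A)_\RR, {}')$ decomposes as a product of matrix algebras $M_{n_i}(K_i)$ with $K_i\in\{\RR,\CC,\HH\}$, equipped with conjugate-transpose involutions. In each factor, the positive definite self-adjoint elements form an open convex cone that is a connected component of the open set of invertible self-adjoint elements (since eigenvalue signs, and hence signature, are locally constant where the determinant is nonzero). The linear map $\F$ is continuous, and $\F_H = \f_H^{-1}\circ\f_H = \mathrm{id}$ is positive definite. For any ample $D$, the map $\f_D$ is an isogeny, so $\F_D$ is invertible and (by Step~1) self-adjoint. The ample cone in $\NS(A)_\RR$ is convex, hence path-connected, so the continuous path $t\mapsto \F_{(1-t)H+tD}$ consists of invertible self-adjoint elements starting at the positive definite identity and thus remains in the positive definite component throughout. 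In particular $\F_D$ is positive definite for every ample $D$.

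Finally, every nef divisor $D\in\NS(A)_\RR$ is a limit of ample classes (e.g.\ $D + n^{-1}H$ is ample for each $n\ge1$), so continuity of $\F$ forces $\F_D$ to be positive semi-definite self-adjoint. In each matrix factor the spectral theorem produces a positive semi-definite self-adjoint square root, and assembling these across factors gives the desired $\a\in\End(A)_\RR$ with $\a'=\a$ and $\F_D = \a^2 = \a'\circ\a$. The main obstacle is the positivity step for ample $D$; the convexity of the ample cone, together with the trivial base case $\F_H=\mathrm{id}$, keeps that argument transparent, but it depends essentially on recognizing Rosati-positivity as concrete matrix positivity via Albert's classification.
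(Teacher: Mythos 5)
Your overall strategy is the same as the paper's: identify $(\End(A)_\RR,\,{}')$ with a product of matrix algebras $\Mcal_n(\RR)$, $\Mcal_n(\CC)$, $\Mcal_n(\HH)$ carrying the conjugate-transpose involution, show that nefness of $D$ corresponds to $\F_D$ being Hermitian and positive semi-definite, and then extract a Hermitian square root factor-by-factor. The difference is in how positivity is obtained. The paper simply cites Mumford (p.~210) for the fact that $D$ is ample if and only if all eigenvalues of $\F_D$ are strictly positive, and then passes to the closure of the ample cone. You instead try to \emph{re-derive} this positivity by a homotopy argument starting from $\F_H=\mathrm{id}$ and using local constancy of signature along the convex ample cone.

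That re-derivation has a gap worth flagging. You assert that ``for any ample $D$, the map $\f_D$ is an isogeny, so $\F_D$ is invertible.'' This is true for an ample divisor in $\Div(A)$ (or after clearing denominators, in $\NS(A)_\QQ$), where $\f_D:A\to\hat A$ is an honest isogeny of abelian varieties. But for $D\in\NS(A)_\RR$ ample, $\f_D$ is only an element of $\Hom(A,\hat A)_\RR$, not a morphism, and the ``isogeny'' language does not apply. You still need to argue that $\F_D$ is invertible for every ample $D\in\NS(A)_\RR$ before the signature argument can run. One fix: the ample cone in $\NS(A)_\RR$ is open and convex, so any ample $D$ is a finite positive $\RR$-linear combination of ample classes in $\NS(A)_\QQ$; each of those gives a positive definite $\F$-image, and a positive combination of positive definite Hermitian matrices is positive definite. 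This shows directly that $\F_D$ is positive definite for ample $D\in\NS(A)_\RR$ and dispenses with both the invertibility issue and the connectedness argument. With this repair your proof is sound; the paper's route, by citing Mumford's eigenvalue characterization as a black box, avoids the issue entirely. Both then conclude identically by taking Hermitian square roots (you via the spectral theorem; the paper via Axler and Loring for the $\RR/\CC$ and $\HH$ cases respectively).
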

\begin{proof}
The~$\RR$-algebra~$\End(A)_\RR$ is isomorphic to a product of matrix
algebras of the form~$\Mcal_n(\RR)$,~$\Mcal_n(\CC)$, and
$\Mcal_n(\HH)$, and the isomorphism may be chosen so that the Rosati
involution on~$\End(A)_\RR$ corresponds to the standard involution
$T\to {}^t\bar T$ on the matrix algebras;
cf.\ \cite[pages~208--209]{MR0282985}. (Here~$t\to\bar t$ is the
identity on~$\RR$, complex conjugation on~$\CC$, and quaternionic
conjugation on~$\HH$.) The map~$\F$ gives an
isomorphism~\cite[page~208]{MR0282985},
\[
  \F : \NS(A)_\RR
  \xrightarrow{\;\sim\;} \bigl\{\a\in\End(A)_\RR : \a'=\a \bigr\},
\]
so~$\NS(A)_\RR$ is identified with a product of Jordan algebras of the
form $\Hcal_n(\RR)$, $\Hcal_n(\CC)$, and~$\Hcal_n(\HH)$, where
\[
  \Hcal_n(\KK)=\{T\in\Mcal_n(\KK):{}^t\bar T=T\}
\]
denotes the set of Hermitian matrices for~$\KK=\RR$,~$\CC$,
or~$\HH$~\cite[Theorem~6, page~208]{MR0282985}.  
\par
The matrices in~$\Hcal_n(\KK)$ have real eigenvalues, since they are
self-adjoint. It is proven in~\cite[page~210]{MR0282985} that a
divisor~$D$ is ample if and only if the eigenvalues associated
to~$\F_D$ are all strictly positive. Since the nef cone is the closure
of the ample cone in~$\NS(A)_\RR$, we see that~$D$ is nef if and only
if all of the eigenvalues associated to~$\F_D$ are non-negative.
Equivalently, $D$ is nef if and only if
the matrices associated to~$\F_D$ are self-adjoint
and positive semi-definite.
\par
A standard result in linear algebra says that a self-adjoint positive
semi-definite matrix~$T\in\Hcal_n(\KK)$ can be written in the
form~$T={}^t\bar SS$ for some~$S\in\Hcal_n(\KK)$.  See, e.g.,
\cite[Theorem~7.27]{MR1482226} for the cases~$\KK=\RR$ and~$\KK=\CC$,
and~\cite[Corollary~2.6]{MR2990115}
for~$\KK=\HH$.\footnote{\cite[Corollary~2.6]{MR2990115} actually says
  that~$T$ is unitarily equivalent to a diagonal matrix
  in~$\Mcal_n(\RR)$.  So~$T={}^t\bar U\D U$ with~${}^t\bar U=U^{-1}$
  and~$\D$ diagonal and real. The positive semi-definiteness of~$T$
  implies that~$\D$ has non-negative entries, so~$\D$ has a square
  root in~$\Mcal_n(\RR)$, say~$\D=\G^2$. Since~${}^t\bar\G=\G$, it
  follows that~$T={}^t\bar SS$ with~$S={}^t\bar U\G U$
  satisfying~${}^t\bar S=S$.}  Hence with the indicated
identifications, we can find an~$\a\in\End(A)_\RR$
satisfying~$\F_D=\a'\circ\a$ and $\a'=\a$.
\end{proof}

We now turn to some arithmetic consequences of these geometric
facts.

\begin{proposition}
\label{proposition:axbybxay}
Let $\a\in\End(A)_\RR$ and let $x,y\in A(\Qbar)$. Then
\[
  \bigl\langle \a(x),y\big\rangle_{A,H}
  =   \bigl\langle x,\a'(y)\big\rangle_{A,H}.
\]
\end{proposition}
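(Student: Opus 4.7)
The plan is to derive this adjointness relation from the geometric identity $H_{\a,1} \equiv H_{1,\a'}$ established in Lemma~\ref{lemma:HabHbarbbara}, by passing from N\'eron--Severi classes on $A^2$ to canonical heights via the standard functorial machinery.

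First I would reduce to $\a\in\End(A)$. Both sides of the claimed identity are $\RR$-linear in~$\a$ (and $\a\mapsto\a'$ is $\RR$-linear), and the pairing has been extended $\RR$-linearly in the setup, so it suffices to treat the case of an actual endomorphism $\a:A\to A$. In this case Lemma~\ref{lemma:HabHbarbbara} provides an algebraic equivalence
\[
  H_{\a,1} \equiv H_{1,\a'} \quad\text{in $\NS(A^2)_\RR$.}
\]

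Next I would transport this equivalence to the height side. The quadratic N\'eron--Tate height on an abelian variety vanishes on algebraically trivial divisors (as noted in the remark following Theorem~\ref{theorem:thmC}, algebraically trivial divisors give heights of size $O(h_X^{1/2})$, which have vanishing quadratic part), so $\qhat$ descends to a well-defined function of the class in~$\NS(A^2)_\RR$. Consequently
\[
  \qhat_{A^2,\,H_{\a,1}}(x,y) = \qhat_{A^2,\,H_{1,\a'}}(x,y)
  \quad\text{for all $(x,y)\in A^2(\Qbar)$.}
\]

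Finally I would expand each side using the definition of $H_{\a,\b}$ and the functoriality $\qhat_{A,\,f^*D}=\qhat_{A,D}\circ f$ for morphisms of abelian varieties. Writing $H_{\a,1}=(\a\pi_1+\pi_2)^*H-(\a\pi_1)^*H-\pi_2^*H$ and applying this pullback formula at $(x,y)$ gives
\[
  \qhat_{A^2,\,H_{\a,1}}(x,y)
  = \qhat_{A,H}\bigl(\a(x)+y\bigr) - \qhat_{A,H}\bigl(\a(x)\bigr) - \qhat_{A,H}(y)
  = \bigl\langle \a(x),\,y\bigr\rangle_{A,H},
\]
where the last equality is the definition of the height pairing. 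The analogous computation for $H_{1,\a'}$ yields $\langle x,\a'(y)\rangle_{A,H}$, and comparing the two expressions completes the proof.

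The argument is essentially a direct transcription of Lemma~\ref{lemma:HabHbarbbara} through the height machinery, so I do not anticipate any serious obstacle; the one point to be careful about is the well-definedness of $\qhat$ on $\NS(A^2)_\RR$, but this is a standard consequence of the N\'eron--Tate theory.
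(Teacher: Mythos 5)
Your proposal is correct and follows essentially the same route as the paper: both proofs reduce the claim to the algebraic equivalence $H_{\a,1}\equiv H_{1,\a'}$ from Lemma~\ref{lemma:HabHbarbbara} and then unwind the definition of $H_{\a,\b}$ via functoriality of $\qhat$ together with the fact that the quadratic part of the canonical height depends only on the algebraic equivalence class. The only cosmetic difference is your preliminary reduction to $\a\in\End(A)$, which is harmless but unnecessary since Lemma~\ref{lemma:HabHbarbbara} is already stated for $\a\in\End(A)_\RR$.
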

\begin{proof}
We first compute
\begin{align}
  \label{hA2Habxypaired}
  \qhat_{A^2,H_{\a,\b}}(x,y)
  &= \qhat_{A^2,(\a\pi_1+\b\pi_2)^*H}(x,y)
     - \qhat_{A^2,(\a\pi_1)^*H}(x,y)
       \notag\\*
  &\omit\hfill${}- \qhat_{A^2,(\b\pi_2)^*H}(x,y)$
    \quad by definition of $H_{\a,\b}$, \notag\\
  &= \qhat_{A,H}\bigl(\a(x)+\b(y)\bigr)
     - \qhat_{A,H}\bigl(\a(x)\bigr) - \qhat_{A,H}\bigl(\b(y)\bigr) \notag\\
  &= \bigl\langle \a(x),\b(y) \bigr\rangle_{A,H}.
\end{align}
Hence
\[
  \bigl\langle \a(x),y \bigr\rangle_{A,H} = \qhat_{A^2,H_{\a,1}}(x,y)
  \quad\text{and}\quad
  \bigl\langle x,\a'(y) \bigr\rangle_{A,H} = \qhat_{A^2,H_{1,\a'}}(x,y).
\]
But Lemma~\ref{lemma:HabHbarbbara} says that $H_{\a,1}\equiv
H_{1,\a'}$, and the (quadratic part of the) canonical height on an
abelian variety depends on only the algebraic equivalence class of the
divisor, which completes the proof of
Proposition~\ref{proposition:axbybxay}.
\end{proof}

\begin{proposition}
\label{proposition:xyADxFDyAH}
Let $D\in\Div(A)_\RR$ and $x,y\in A(\Qbar)_\RR$. Then
\[
  \langle x,y\rangle_{A,D} = \bigl\langle x,\F_D(y)\big\rangle_{A,H}.
\]
\end{proposition}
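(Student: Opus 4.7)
The plan is to express each side of the claimed identity as the quadratic canonical height on $A\times A$ of an appropriate divisor, and then to show that those two divisors on~$A^2$ are algebraically equivalent. Write $\pi_1,\pi_2:A^2\to A$ for the projections and $m=\pi_1+\pi_2$ for the group law. Functoriality of the N\'eron--Tate quadratic form under morphisms of abelian varieties gives $\qhat_{A^2,m^*D}(x,y)=\qhat_{A,D}(x+y)$ and $\qhat_{A^2,\pi_i^*D}(x,y)=\qhat_{A,D}(\pi_i(x,y))$, so the divisor
\[
  E_D:=m^*D-\pi_1^*D-\pi_2^*D\in\Div(A^2)_\RR
\]
satisfies $\qhat_{A^2,E_D}(x,y)=\langle x,y\rangle_{A,D}$. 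On the other hand, the computation~\eqref{hA2Habxypaired} from the proof of Proposition~\ref{proposition:axbybxay}, applied with $\a=1$ and $\b=\F_D$, yields
\[
  \qhat_{A^2,H_{1,\F_D}}(x,y)=\langle x,\F_D(y)\rangle_{A,H}.
\]
Therefore it suffices to establish the algebraic equivalence $E_D\equiv H_{1,\F_D}$ on $A^2$. Since the map $D\mapsto\f_D$ is injective on $\NS(A^2)_\RR$ (cf.~\cite[p.~208]{MR0282985}), the task reduces to verifying that $\f_{E_D}=\f_{H_{1,\F_D}}$.

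To compute $\f_{E_D}$, I would apply~\eqref{eqn:ffstardx} of Lemma~\ref{lemma:ffDhatffDfx} to each of the three summands of $E_D$ and then use the identity $\hat m=\hat\pi_1+\hat\pi_2$ (an instance of Mumford's theorem of the cube, since $\f_D$ takes values in $\Pic^0$). Expanding $\hat m\circ\f_D\circ m$ bilinearly produces four terms; the two diagonal ones cancel against $-\f_{\pi_1^*D}$ and $-\f_{\pi_2^*D}$, leaving the two cross terms
\[
  \f_{E_D}=\hat\pi_1\circ\f_D\circ\pi_2+\hat\pi_2\circ\f_D\circ\pi_1.
\]
For $\f_{H_{1,\F_D}}$, the computation already worked out in the proof of Lemma~\ref{lemma:HabHbarbbara}, specialized to $(\a,\b)=(1,\F_D)$, gives
\[
  \f_{H_{1,\F_D}}=\hat\pi_1\circ\f_H\circ\F_D\circ\pi_2+\hat\pi_2\circ\f_H\circ\F_D'\circ\pi_1.
\]
To conclude, I would invoke two identifications: first, $\f_H\circ\F_D=\f_D$, which is immediate from the very definition $\F_D=\f_H^{-1}\circ\f_D$; and second, $\F_D'=\F_D$, which holds because $\F$ takes its values in the subspace $\{\a\in\End(A)_\RR:\a'=\a\}$ of Rosati-fixed endomorphisms \cite[p.~208]{MR0282985}. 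After substituting these into the previous display, the expression for $\f_{H_{1,\F_D}}$ becomes exactly the one obtained for $\f_{E_D}$.

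The main conceptual input is the Rosati-symmetry $\F_D'=\F_D$: without it only one of the two summands of $\f_{H_{1,\F_D}}$ would match its counterpart in $\f_{E_D}$, and the argument would break down. Everything else is a formal manipulation of the functoriality relation $\f_{f^*L}=\hat f\circ\f_L\circ f$ together with $\hat m=\hat\pi_1+\hat\pi_2$, and should present no real obstacle.
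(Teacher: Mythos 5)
Your proof is correct, but it takes a genuinely different route from the paper's. The paper's argument never leaves $A$: it establishes the single identity $\qhat_{A,\f_E(z)}(x)=\langle x,z\rangle_{A,E}$ (valid for any $E\in\Div(A)$ and $z\in A$, via the translation property $\qhat_{A,T_z^*E}(x)=\qhat_{A,E}(x+z)-\qhat_{A,E}(z)$), and then applies it twice: once to rewrite $\langle x,\F_D(y)\rangle_{A,H}$ as $\qhat_{A,\f_H(\F_D(y))}(x)$, and once (backwards) to recognize $\qhat_{A,\f_D(y)}(x)$ as $\langle x,y\rangle_{A,D}$; the middle step is the tautology $\f_H\circ\F_D=\f_D$. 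Your argument instead passes to $A^2$, identifies both sides as $\qhat_{A^2}$ of explicit divisors $E_D$ and $H_{1,\F_D}$, and proves their algebraic equivalence by matching $\f_{E_D}$ with $\f_{H_{1,\F_D}}$, reusing the computation from the proof of Lemma~\ref{lemma:HabHbarbbara}. Your route ``free-rides'' on the machinery already assembled for Proposition~\ref{proposition:axbybxay}, which is aesthetically pleasing, but it requires the additional input that $\F$ lands in the Rosati-fixed subspace (to get $\F_D'=\F_D$), whereas the paper's proof needs nothing beyond the definition of $\F_D$ and the translation property of $\qhat$. Your computation of $\f_{E_D}$, your use of $\hat m=\hat\pi_1+\hat\pi_2$, and the final matching are all sound, and the extension of $H_{\a,\b}$ from $\End(A)_\QQ$ to $\End(A)_\RR$ coefficients (needed since $\F_D\in\End(A)_\RR$) is legitimate because $(\a,\b)\mapsto H_{\a,\b}$ is bilinear modulo algebraic equivalence.
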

\begin{proof}
For~$E\in\Div(A)$ and~$x,z\in A$, we compute
\begin{align}
  \label{eqn:hAfEzx}
  \qhat_{A,\f_E(z)}(x)
  &= \qhat_{A,T_z^*E-E}(x) 
    \quad\text{definition of $\f_E$,} \notag\\
  &= \qhat_{A,T_z^*E}(x) - \qhat_{A,E}(x)
     \quad\text{linearity,}  \notag\\
  &= \qhat_{A,E}\bigl(T_z(x)\bigr) - \qhat_{A,E}\bigl(T_z(0)\bigr)
         - \qhat_{A,E}(x) \hspace{4em}\notag\\
  &\omit\hfil
   \text{from \cite[Theorem~B.5.6(d)]{hindrysilverman:diophantinegeometry},}
       \notag\\
  &=\qhat_{A,E}(x+z) - \qhat_{A,E}(z) - \qhat_{A,E}(x) \notag\\
  &= \langle x,z\rangle_{A,E}.
\end{align}
Applying \eqref{eqn:hAfEzx} twice, we find that
\begin{align*}
  \bigl\langle x,\F_D(y)\big\rangle_{A,H}
  &= \qhat_{A,\f_H\circ\F_D(y)}(x) \\
  &\omit\hfill\text{from \eqref{eqn:hAfEzx} with $z=\F_D(y)$ and $E=H$,} \\
  &= \qhat_{A,\f_D(y)}(x)
     \quad\text{since $\F_D=\f_H^{-1}\circ\f_D$,} \\
  &= \langle x,y\rangle_{A,D}.
     \quad\text{from \eqref{eqn:hAfEzx} with $z=y$ and $E=D$.} 
\end{align*}

This completes the proof of Proposition~\ref{proposition:xyADxFDyAH}.
\end{proof}

We now have the tools needed to prove Theorem~\ref{theorem:hADx0ax0},
which we restate as the first part of the following theorem.

\begin{theorem} 
\label{theorem:hADx0ax02}
Let $A/\Qbar$ be an abelian variety defined over~$\Qbar$, let
$D\in\Div(A)_\RR$ be a nonzero nef divisor, and
let~$\qhat_{A,D}$ be the quadratic part of the
canonical height on~$A$ with respect to~$D$.  
\begin{parts}
\Part{(a)}
There is a unique abelian subvariety $B_D\subsetneq A$ such that
\[
  \bigl\{ x\in A(\Qbar) : \qhat_{A,D}(x)=0 \bigr\}
  = B_D(\Qbar) + A(\Qbar)_\tors.
\]
\Part{(b)}
Let $f\in\End(A)$ and suppose that $f^*D\equiv\l D$ in $\NS(A)_\RR$.
Then the abelian subvariety~$B_D$ from~\textup{(a)} is~$f$-invariant, i.e., 
$f(B_D)\subset B_D$.
\Part{(c)}
Let $K/\QQ$ be a number field over which~$A$ and~$D$ are defined. Then
\[
  \bigl\{ x\in A(K) : \qhat_{A,D}(x)=0 \bigr\}
\]
is not Zariski dense in~$A$.
\end{parts}
\end{theorem}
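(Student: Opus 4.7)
Combine Propositions~\ref{proposition:DnefiffFda2}, \ref{proposition:axbybxay}, and~\ref{proposition:xyADxFDyAH} to derive the fundamental identity
\[
  \qhat_{A,D}(x) = \qhat_{A,H}\bigl(\a(x)\bigr),
  \qquad x\in A(\Qbar),
\]
where $\a\in\End(A)_\RR$ is the Rosati-symmetric square root of $\F_D$ furnished by Proposition~\ref{proposition:DnefiffFda2}; indeed
\begin{align*}
  2\qhat_{A,D}(x)
  &= \langle x,x\rangle_{A,D} = \langle x,\F_D(x)\rangle_{A,H} \\
  &= \langle \a(x),\a(x)\rangle_{A,H} = 2\qhat_{A,H}\bigl(\a(x)\bigr).
\end{align*}
Since $\qhat_{A,H}$ is positive definite on $A(\Qbar)_\RR$, this shows $\qhat_{A,D}\ge 0$, so $N_D:=\{x:\qhat_{A,D}(x)=0\}$ is a subgroup, and $x\in N_D$ iff $\a(x)=0$ in $A(\Qbar)_\RR$.

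\textbf{Part~(a).} Take $B_D$ to be the maximal abelian subvariety of $A$ contained in $N_D$; it exists because sums of abelian subvarieties inside the subgroup $N_D$ remain in $N_D$. The inclusion $B_D(\Qbar)+A(\Qbar)_\tors\subseteq N_D$ is immediate (torsion translates have trivial $\qhat$). For the reverse, fix $x\in N_D$, let $B_x$ be the smallest abelian subvariety with $x\in B_x(\Qbar)+A(\Qbar)_\tors$, pick torsion $t$ with $b:=x-t\in B_x(\Qbar)$, and note that $\ZZ b$ is Zariski dense in $B_x$ by minimality. The crucial step is $B_x\subseteq N_D$. Applying the fundamental identity to $B_x$ with the nef restriction $D|_{B_x}$ yields $\b\in\End(B_x)_\RR$ with $\b'=\b$ and $\b\circ\b=\F_{D|_{B_x}}$, and $\qhat_{B_x,D|_{B_x}}(b)=\qhat_{A,D}(x)=0$ forces $\b(b)=0$ in $B_x(\Qbar)_\RR$. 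Expand $\b=\sum_{i=1}^m c_i\b_i$ with $\b_i\in\End(B_x)$ chosen $\QQ$-linearly independent and $c_i\in\RR$. I claim $\b_1(b),\ldots,\b_m(b)$ are $\QQ$-linearly independent in $B_x(\Qbar)/\tors$: an integer relation $\psi(b)\in B_x(\Qbar)_\tors$ with $\psi:=\sum n_i\b_i\ne 0$ gives $\ZZ(Nb)\subseteq\ker(\psi)(\Qbar)$ for some $N$, and Zariski density of $\ZZ(Nb)=N\cdot\ZZ b$ in $B_x$ then forces $B_x\subseteq\ker(\psi)$, so $\psi=0$ in $\End(B_x)$, contradicting the $\QQ$-independence of the $\b_i$. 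Hence the $\b_i(b)$ are $\RR$-linearly independent in $B_x(\Qbar)_\RR$, so $\sum c_i\b_i(b)=0$ forces all $c_i=0$, whence $\b=0$, $\F_{D|_{B_x}}=0$, and, by injectivity of $\F$ on $\NS(B_x)_\RR$, $D|_{B_x}\equiv 0$. Thus $\qhat_{B_x,D|_{B_x}}\equiv 0$, so $B_x\subseteq N_D$, so $B_x\subseteq B_D$, and $x\in B_D(\Qbar)+A(\Qbar)_\tors$. For uniqueness, any other such $B'$ satisfies $B'\subseteq B_D$ by maximality, while the inclusion $B_D(\Qbar)\subseteq B'(\Qbar)+A(\Qbar)_\tors$ makes the image of $B_D\to A/B'$ an abelian subvariety with only torsion $\Qbar$-points, hence zero, giving $B_D\subseteq B'$. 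Finally $B_D\subsetneq A$: otherwise $\qhat_{A,D}\equiv 0$ on $A(\Qbar)$, which via the fundamental identity and non-degeneracy of $\langle\cdot,\cdot\rangle_{A,H}$ on $A(\Qbar)/\tors$ forces $\F_D$ to act trivially on $A(\Qbar)/\tors$ and hence to vanish in $\End(A)_\RR$, contradicting $D\ne 0$ in $\NS(A)_\RR$.

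\textbf{Parts~(b) and~(c).} For~(b), if $x\in B_D(\Qbar)$ then the functional equation $\qhat_{A,f^*D}=\qhat_{A,D}\circ f$ together with $f^*D\equiv\l D$ and algebraic-equivalence invariance of $\qhat$ gives $\qhat_{A,D}\bigl(f(x)\bigr)=\l\qhat_{A,D}(x)=0$, so $f(B_D)(\Qbar)\subseteq N_D$, and the projection-to-$A/B_D$ argument from uniqueness shows $f(B_D)\subseteq B_D$. For~(c), by Mordell--Weil both $A(K)$ and $(A/B_D)(K)$ are finitely generated, so their torsion subgroups are finite. Setting $\Gamma:=\{x\in A(K):\qhat_{A,D}(x)=0\}\subseteq N_D$, part~(a) shows $\pi(\Gamma)\subseteq(A/B_D)(K)_\tors$ (with $\pi:A\to A/B_D$ the quotient), so $\Gamma$ is contained in $\pi^{-1}$ of a finite set, i.e., in a finite union of translates of $B_D$. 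Since $B_D\subsetneq A$, this is a proper Zariski-closed subvariety of $A$, so $\Gamma$ is not Zariski dense.

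\textbf{Main obstacle.} The crux is the proof in part~(a) that $B_x\subseteq N_D$ for every $x\in N_D$. It marries the algebraic-geometric fact (Proposition~\ref{proposition:DnefiffFda2}) that $\F_{D|_{B_x}}$ splits Rosati-symmetrically as $\b\circ\b$ with a linear-algebra argument in $\End(B_x)_\RR$ that exploits Zariski density of $\ZZ b$ in $B_x$ to promote $\QQ$-linear independence of the endomorphisms $\b_i$ into $\QQ$-linear independence of their values at~$b$; everything else is a standard projection trick or a formal consequence of the height machinery developed earlier in the paper.
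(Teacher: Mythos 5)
Your argument is correct but reaches the key existence statement in~(a) and the rigidity statements in~(a) and~(b) by a route that diverges from the paper's after the common reduction. Both proofs begin with Propositions~\ref{proposition:DnefiffFda2}, \ref{proposition:axbybxay}, and~\ref{proposition:xyADxFDyAH} to get $\qhat_{A,D}(x)=\qhat_{A,H}\bigl(\a(x)\bigr)$, so that $\qhat_{A,D}(x)=0$ if and only if $\a(x)=0$ in $A(\Qbar)_\RR$. The paper then applies the abstract tensor-product Lemma~\ref{lemma:ax0iffbix0}: write $\a=\sum c_i\b_i$ with $\b_i\in\End(A)_\QQ$ and with $c_i\in\RR$ chosen $\QQ$-linearly independent; then $\a(x)=0$ iff every $\b_i(x)=0$, and $B_D$ is obtained explicitly as the connected component of $\bigcap\Ker(\b_i)$, while uniqueness and $f$-invariance follow from a scheme-theoretic observation that any irreducible finite-type subscheme of $B'+A_\tors$ lands in a single translate $B'+A[m]$. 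You instead define $B_D$ abstractly as the maximal abelian subvariety contained in $\{\qhat_{A,D}=0\}$, and to check that every zero of $\qhat_{A,D}$ lies in $B_D+A_\tors$, you restrict $D$ to the minimal abelian subvariety $B_x$ through $x$ modulo torsion and use Zariski density of $\ZZ b$ in $B_x$ to promote $\QQ$-linear independence of endomorphisms of $B_x$ to $\QQ$-linear independence of their values at~$b$. This is an attractive geometric substitute for the paper's formal lemma, though it loses the explicit kernel description of $B_D$ and requires setting up the density argument. One point to tidy up: your uniqueness argument for~(a) and the invariance argument in~(b) rest on the assertion that a positive-dimensional abelian subvariety over $\Qbar$ with only torsion $\Qbar$-points must be trivial. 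That is true (it follows, for instance, from unboundedness of the Weil height on any positive-dimensional projective variety over $\Qbar$ together with $\hhat=h+O(1)$ and $\hhat\equiv0$ on torsion), but you should justify it; the paper's finite-type argument sidesteps the issue entirely. Your part~(c) coincides in substance with the paper's.
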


\begin{proof}[Proof of Theorem~$\ref{theorem:hADx0ax0}$]
Since~$D$ is nef, we can use Proposition~\ref{proposition:DnefiffFda2}
to find an $\a\in\End(A)_\RR$ (depending on~$D$) satisfying
\begin{equation}
  \label{eqn:aprimeaFDa2}
  \F_D=\a'\circ\a \quad\text{and}\quad \a'=\a.
\end{equation}
We compute
\begin{align*}
  \qhat_{A,D}(x)
  &= \frac12 \langle x,x\rangle_{A,D} \\*
  &= \frac12 \bigl\langle x,\F_D(x)\big\rangle_{A,H}
     \quad\text{from Proposition~\ref{proposition:xyADxFDyAH}(b),} \\
  &= \frac12  \bigl\langle x,\a'\circ\a(x)\big\rangle_{A,H}
     \quad\text{from \eqref{eqn:aprimeaFDa2},} \\
  &= \smash[b]{
         \frac12  \bigl\langle \a(x),\a(x)\big\rangle_{A,H}
     }
     \quad\text{from Proposition \ref{proposition:axbybxay} with $y=\a(x)$}\\*
  &=  \qhat_{A,H}\bigl(\a(x)\bigr).
\end{align*}
But~$\qhat_{A,H}$ is the (quadratic part of the) canonical height
on~$A$ relative to an ample divisor, so it is a positive definite
quadratic form on $A(\Qbar)\otimes\RR$;
see~\cite[Proposition~B.5.3]{hindrysilverman:diophantinegeometry}.
(Recall that the canonical height pairing on~$A(\Qbar)$
is extended $\RR$-linearly to~$A(\Qbar)_\RR$.)
Hence
\begin{equation}
  \label{eqn:hhatADx0aix0}
  \qhat_{A,D}(x)=0
  \quad\Longleftrightarrow\quad
  \a(x)=0\quad\text{in $A(\Qbar)_\RR$.}
\end{equation}

To complete the proof of Theorem~\ref{theorem:hADx0ax0}, we use
the following elementary linear algebra result.

\begin{lemma}
\label{lemma:ax0iffbix0}
Let $V$ be a~$\QQ$-vector space, and fix a $\QQ$-vector subspace
$\Dcal\subset\End(V)$ of the ring of $\QQ$-linear
endomorphisms of~$V$.  Let $\a\in\Dcal\otimes\RR$ be an $\RR$-linear
transformation of~$V\otimes\RR$. Then there is a finite collection of
endomorphisms $\b_1,\ldots,\b_r\in\Dcal$ with the property that
for~$\bfv\in V$,
\[
  \text{$\a(\bfv)=\bfzero$ in $V\otimes\RR$}
  \quad\Longleftrightarrow\quad
  \text{$\b_1(\bfv)=\cdots=\b_r(\bfv)=\bfzero$ in $V$.}
\]
\end{lemma}
\begin{proof}
We write $\a\in\Dcal\otimes\RR$ as a sum
\[
  \a = \sum_{i=1}^r c_i\b_i
  \quad\text{with $c_i\in\RR$ and $\b_i\in\Dcal$.}
\]
We may assume that~$r$ is chosen to be minimal, which implies
that\footnote{If not, then after relabeling, we can write
  $c_1=\sum_{i=2}^r b_ic_i$ with $b_i\in\QQ$, so $ \a = \sum_{i=2}^r
  c_i(b_i\b_1+\b_i)$ with $b_i\b_1+\b_i\in\Dcal$, contradicting the
  minimality of~$r$.}
\[
  \text{$c_1,\ldots,c_r\in\RR$ are~$\QQ$-linearly
       independent.}
\]
This in turn implies that for $\bfv_1,\ldots,\bfv_r\in V$ we
have\footnote{Let $\{\bfe_j\}_{j\in\Jcal}$ be a $\QQ$-basis for~$V$,
  so $\{\bfe_j\}_{j\in\Jcal}$ is also automatically an~$\RR$-basis
  for~$V\otimes\RR$. For each~$1\le i\le r$, write
  $\bfv_i=\sum_{j\in\Jcal}b_{ij}\bfe_j$ with all $b_{ij}\in\QQ$ and
  almost all~$b_{ij}=0$.  Then $\bfzero=\sum_i c_i\bfv_i = \sum_{j\in\Jcal}
  \left(\sum_i c_i b_{ij}\right) \bfe_j$, so the assumptions that
  the~$\bfe_j$ are~$\RR$-linearly independent and the~$c_i$
  are~$\QQ$-linearly independent implies that~$b_{ij}=0$ for
  all~$i,j$.}
\begin{equation}
  \label{eqn:sumcivi0iffvi0}
  \sum_{i=1}^r c_i\bfv_i=\bfzero\quad\text{in $V\otimes\RR$}
  \quad\Longleftrightarrow\quad
  \bfv_1=\cdots=\bfv_r=\bfzero.
\end{equation}
Then for~$\bfv\in V$ we have
\begin{align*}
  \text{$\a(\bfv)=\bfzero$ in $V\otimes\RR$}
  &\quad\Longleftrightarrow\quad
  \text{$\displaystyle\sum_{i=1}^r c_i\b_i(\bfv)=\bfzero$ in $V\otimes\RR$} \\*
  &\quad\Longleftrightarrow\quad
  \text{$\b_1(\bfv)=\cdots=\b_r(\bfv)=\bfzero$ in $V$,}
\end{align*}
where for the last implication we have used~\eqref{eqn:sumcivi0iffvi0}
and the fact that $\b_1(\bfv),\ldots,\b_i(\bfv)$ are in~$V$.
\end{proof}

We now resume the proof of Theorem~\ref{theorem:hADx0ax02}.  We let
$\a\in\End(A)_\RR$ be the nonzero endomorphism appearing
in~\eqref{eqn:aprimeaFDa2}.  We apply Lemma~\ref{lemma:ax0iffbix0} to
the $\QQ$-vector space~$V=A(\Qbar)_\QQ$ and the $\QQ$-subspace
\[
  \Dcal = \End(A)_\QQ \subset \End(V).
\]
(Here~$\End(A)$ denotes the ring of algebraic maps~$A\to A$,
while~$\End(V)$ denotes the ring of~$\QQ$-linear maps~$V\to V$.)
Lemma~\ref{lemma:ax0iffbix0} says that we can find endomorphisms
$\b_1,\ldots,\b_r\in\End(A)_\QQ$ so that for $x\in A(\Qbar)_\QQ$,
\begin{equation}
  \label{eqn:aix0bijx0}
  \a(x)=0
  \quad\Longleftrightarrow\quad
  \b_1(x)=\cdots=\b_r(x)=0.
\end{equation}
Combining~\eqref{eqn:hhatADx0aix0} and~\eqref{eqn:aix0bijx0} yields
\begin{equation}
  \label{eqn:hADx0bi0}
  \qhat_{A,D}(x)=0
  \quad\Longleftrightarrow\quad
  \b_1(x)=\cdots=\b_r(x)=0\quad\text{in $A(\Qbar)_\QQ$.}
\end{equation}
Replacing each of the finitely many~$\b_{i}\in\End(A)_\QQ$ by an
appropriate non-zero integral multiple $m_{i}\b_{i}$, we may assume
that the~$\b_{i}$ all lie in~$\End(A)$.
\par
We let
\[
  B = \bigcap_{i=1}^r \Ker(\b_{i}).
\]
We note that~$B$ is a (not necessarily connected) algebraic subgroup
of~$A$, and further, $B\ne A$, since~$\a\ne0$, so at least
one~$\b_i\ne0$. (This is where we use the assumption that~$D\ne0$,
which ensures that~$\F_D\ne0$, so~$\a\ne0$.)  
The definition of~$B$ and~\eqref{eqn:hADx0bi0} imply that
\[
  \bigl\{x\in A(\Qbar)_\QQ : \qhat_{A,D}(x)=0 \bigr\}
  = B(\Qbar)_\QQ.
\]
It follows that
\[
  \bigl\{x\in A(\Qbar) : \qhat_{A,D}(x)=0 \bigr\}  
  = B(\Qbar)^\div,
\]
where
\[
   B(\Qbar)^\div
   = \bigl\{y\in A(\Qbar) : my\in B(\Qbar)~\text{for some $m\ge1$} \bigr\}
\]
is the divisible hull of~$B$ in~$A$.  Without loss of generality, we
may replace~$B$ by its connected component, since that won't change
the group~$B(\Qbar)^\div$.  But if~$B$ is connected, then it is easy
to see that\footnote{One inclusion is clear. For the other, let $x\in
  B(\Qbar)^\div$. Then $mx\in B$ for some $m\ge1$. Since~$B$ is
  connected, there is a $y\in B(\Qbar)$ with $my=mx$. Then $x=y+(x-y)$
  with $y\in B(\Qbar)$ and $x-y\in A(\Qbar)_\tors$.}
\[
  B(\Qbar)^\div = B(\Qbar)+A(\Qbar)_\tors.
\]
In order to complete the proof of Theorem~\ref{theorem:hADx0ax02}(a),
it remains only to show that~$B$ is uniquely determined by~$D$.
\par
So suppose that $B_1\subset A$ and $B_2\subset A$ are abelian
subvarieties of~$A$ satisfying $B_1+A_\tors=B_2+A_\tors$.  We may
view~$B_2+A_\tors$ as a scheme over $\Spec(\Qbar)$, and our assumption
implies that $B_1$ is a $\Qbar$-subscheme of $B_2+A_\tors$.  But~$B_1$
is a scheme of finite type over $\Spec(\Qbar)$, so it is contained in
a subscheme of $B_2+A_\tors$ of finite type.  Hence there is an
integer $m\ge1$ such that $B_1\subset B_2+A[m]$. But~$B_1$ and~$B_2$
are irreducible, so $B_1\subset B_2+t$ for some $t\in A[m]$. Finally,
the fact that~$B_1$ and~$B_2$ contain~$0$ implies that~$B_1\subset
B_2$.  Reversing the roles of~$B_1$ and~$B_2$ gives the opposite
inclusion, so $B_1=B_2$.
\par\noindent(b)\enspace
We have
\begin{equation}
  \label{eqn:aAFflhAF}
  \qhat_{A,D}\circ f = \qhat_{A,f^*D} = \qhat_{A,\l D} = \l\qhat_{A,D},
\end{equation}
where the middle equality follows from the fact that the quadratic part
of the canonical height depends only on the algebraic equivalence class
of the divisor. Hence applying~(a) twice, we find that
\begin{align*}
  f(B_D)
  &\subset f(B_D+A_\tors)\\
  &= f\bigl( \{x\in A : \qhat_{A,D}(x)=0 \} \bigr)\\
  &\subset \{x\in A : \qhat_{A,D}(x)=0 \}\\
  &= B_D+A_\tors.
\end{align*}
But as in the proof of~(a), the image~$f(B_D)$ is a subscheme of
$B_D+A_\tors$ that is of finite type over~$\Spec(\Qbar)$, so~$f(B_D)$
is contained in $B_D+A[m]$ for some~$m\ge1$. But~$f(B_D)$ is irreducible,
so it is contained in~$B_D+t$ for some~$t\in A[m]$, and finally the
fact that~$0$ is in both~$B_D$ and~$f(B_D)$ implies that
$f(B_D)\subset B_D$.
\par\noindent(c)\enspace
Let
\[
  \G = A(K) \cap \bigl(B_D(\Qbar)+A(\Qbar)_\tors\bigr).
\]
It follows~(a) that
\[
  \bigl\{ x\in A(K) : \qhat_{A,D}(x)=0 \bigr\} \subset \G,
\]
so in order to prove~(c), it suffices to show that~$\G$ is not
Zariski dense in~$A$.
\par
The group~$A(K)$ is finitely
generated~\cite[Theorem~C.0.1]{hindrysilverman:diophantinegeometry},
so its subgroup~$\G$ is also finitely generated. Let~$y_1,\ldots,y_t$
be generators for~$\G$. Each~$y_i$ is in~$B_D+A_\tors$, so we can find
an integer~$m\ge1$ such that $my_1,\ldots,my_t\in B_D$. It follows
that~$m\G\subset B_D$, and hence~$\G\subset B_D+A[m]$ is not Zariski
dense in~$A$.
\end{proof}

\begin{lemma}
\label{lemma:existseigendiv}
Let $f:X\to X$ be a morphism of a normal projective variety.  Then
there exists a non-zero nef eigendivisor~$F\in\NS(X)_\RR$ satisfying
$f^*F\equiv \d_f F$.
\end{lemma}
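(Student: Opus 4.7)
The plan is to apply a Perron--Frobenius type theorem to the action of $f^*$ on the nef cone inside $\NS(X)_\RR$. Recall from the footnote in the introduction that $\d_f$ is, by definition, the spectral radius of the induced linear map $f^*:\NS(X)_\RR\to\NS(X)_\RR$, so the task reduces to producing an eigendivisor of $f^*$ with eigenvalue $\d_f$ lying in the nef cone.

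First I would verify that the hypotheses of a finite-dimensional cone Perron--Frobenius theorem are met. The theorem of the base tells us $\NS(X)_\RR$ is a finite-dimensional real vector space. Inside it, the nef cone $\Nef(X)$ is closed and convex. Since $X$ is projective, the interior of $\Nef(X)$ contains the class of any ample divisor and hence is non-empty. The cone is pointed: if $F$ and $-F$ are both nef, then $F\cdot C\ge 0$ and $F\cdot C\le 0$ for every irreducible curve $C\subset X$, forcing $F\cdot C=0$ for all $C$, i.e.\ $F\equiv 0$ in $\NS(X)_\RR$. Thus $\Nef(X)$ is a proper cone in the sense of Perron--Frobenius theory.

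Next I would check that $f^*$ preserves $\Nef(X)$. For an irreducible curve $C\subset X$ and a nef divisor $D$, the projection formula gives
\[
  (f^*D)\cdot C = D\cdot f_*C,
\]
and $f_*C$ is either $0$ (if $f(C)$ is a point) or a positive integer multiple of the irreducible curve $f(C)$; in both cases the right-hand side is $\ge 0$, so $f^*D$ is nef. Hence $f^*$ is a linear endomorphism of $\NS(X)_\RR$ carrying the proper cone $\Nef(X)$ into itself.

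Finally I would invoke the cone version of the Perron--Frobenius theorem (sometimes called the Birkhoff--Vandergraft theorem, or an immediate Brouwer fixed-point argument applied to the map $v\mapsto Tv/\|Tv\|$ on a suitable compact cross-section of the cone): any linear endomorphism of a finite-dimensional real vector space that preserves a proper closed convex cone has its spectral radius as an eigenvalue with an eigenvector in the cone. Applied to $T=f^*$ acting on $\NS(X)_\RR$ with the cone $\Nef(X)$, this produces a nonzero nef class $F\in\NS(X)_\RR$ with $f^*F=\d_f F$, i.e.\ $f^*F\equiv \d_f F$, which is the desired statement. There is no real obstacle here; the only content is the combination of two standard facts (nef is $f^*$-stable; Perron--Frobenius on proper cones), and the result could be stated for any morphism of a projective variety independent of the dynamical setup.
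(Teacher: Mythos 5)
Your proposal is correct and follows exactly the same route as the paper: the paper simply cites Birkhoff's result on linear transformations preserving a cone, applied to $f^*$ and the nef cone in the finite-dimensional space $\NS(X)_\RR$, and refers the reader elsewhere for details. You have supplied precisely the omitted verifications (nef cone is closed, pointed, with nonempty interior; $f^*$ preserves it by the projection formula), all of which are accurate.
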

\begin{proof}
The existence of~$F$ follows from an elementary
Perron--Fro\-be\-nius-type result of Birkhoff~\cite{MR0214605} applied to
the vector space $\NS(X)_\RR$, the linear transformation~$f^*$, and
the nef cone in~$\NS(X)_\RR$.
See~\cite[Remark~28]{kawsilv:arithdegledyndeg}.
\end{proof}

\begin{corollary}
\label{corollary:Ofxdenseafxdf}
Let $f\in\End(A)$, let $F\in\Div(A)_\RR$ be a divisor
satisfying~$f^*F\equiv\d_fF$ as described
in Lemma~$\ref{lemma:existseigendiv}$,
let $x\in A(\Qbar)$, and let $\overline{\Orbit_f(x)}$ denote the
Zariski closure in~$A$ of the $f$-orbit of~$x$. Then 
the following implications hold\textup:
\[
  \overline{\Orbit_f(x)}=A
  \quad\Longrightarrow\quad
  \qhat_{A,F}(x)>0
  \quad\Longrightarrow\quad
  \a_f(x)=\d_f.
\]
\end{corollary}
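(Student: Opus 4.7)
The plan is to establish the two implications separately, both exploiting the transformation law
\[
  \qhat_{A,F}\circ f = \d_f \qhat_{A,F},
\]
which follows from $f^*F\equiv \d_f F$ together with the fact that $\qhat_{A,\cdot}$ depends only on the algebraic equivalence class of a divisor and is linear in it; this is precisely the identity~\eqref{eqn:aAFflhAF} used in the proof of Theorem~\ref{theorem:hADx0ax02}(b). The first implication combines this with the Zariski non-density statement Theorem~\ref{theorem:hADx0ax02}(c), and the second combines it with Lemma~\ref{lemma:alowergehDfn} and the N\'eron--Tate decomposition of $h_F$.

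For the first implication, assume $\overline{\Orbit_f(x)} = A$ and suppose for contradiction that $\qhat_{A,F}(x) = 0$. Choose a number field $K$ over which $A$, $F$, $f$, and $x$ are all defined, so that the entire forward orbit lies in $A(K)$. The functional equation gives $\qhat_{A,F}\bigl(f^n(x)\bigr) = \d_f^n\qhat_{A,F}(x) = 0$ for every $n\ge 0$, so the Zariski-dense orbit is contained in $\bigl\{y\in A(K):\qhat_{A,F}(y)=0\bigr\}$. Since $F$ is nonzero and nef by Lemma~\ref{lemma:existseigendiv}, Theorem~\ref{theorem:hADx0ax02}(c) says this set is not Zariski dense in $A$, a contradiction.

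For the second implication, assume $\qhat_{A,F}(x) > 0$. The upper bound $\aupper_f(x) \le \d_f$ is immediate from Theorem~\ref{theorem:thmA} together with Remark~\ref{remark:afxfromNS}, since $\d_f$ is the spectral radius of $f^*$ on $\NS(A)_\QQ$. For the lower bound, I use the N\'eron--Tate decomposition $h_F = \qhat_{A,F} + \hat\ell_F + O(1)$, where $\hat\ell_F$ is a linear function on $A(\Qbar)$ satisfying the standard Cauchy--Schwarz bound $|\hat\ell_F(y)| = O\bigl(\sqrt{h_H(y)+1}\bigr)$. The upper-bound calculation in the proof of Theorem~\ref{theorem:thmA} yields $h_H\bigl(f^n(x)\bigr) = O(n^\rho \d_f^n)$, so the linear and bounded contributions to $h_F\bigl(f^n(x)\bigr)$ are of size $O\bigl(n^{\rho/2}\d_f^{n/2}\bigr)$. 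This is negligible compared with the quadratic contribution $\qhat_{A,F}\bigl(f^n(x)\bigr) = \d_f^n\qhat_{A,F}(x)$, so $h_F\bigl(f^n(x)\bigr) \ge \tfrac12 \d_f^n\qhat_{A,F}(x)$ for $n$ sufficiently large. Lemma~\ref{lemma:alowergehDfn} then gives $\alower_f(x) \ge \d_f$, completing the proof.

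The main technical subtlety lies in this last step: one must guarantee that the linear and constant terms in the N\'eron--Tate decomposition of $h_F$ do not dominate the quadratic term along the orbit. This is exactly where the polynomial-times-exponential growth estimate $h_H\bigl(f^n(x)\bigr) = O(n^\rho\d_f^n)$ coming from the proof of Theorem~\ref{theorem:thmA} is essential, since it ensures that a half-power of an ample height cannot overtake the full-power growth $\d_f^n$ dictated by the functional equation.
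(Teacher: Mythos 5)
Your proof is correct and follows the paper's strategy for the first implication exactly: use the functional equation $\qhat_{A,F}\circ f=\d_f\qhat_{A,F}$ to see that $\qhat_{A,F}(x)=0$ forces the whole orbit into the zero set of $\qhat_{A,F}$, which by Theorem~\ref{theorem:hADx0ax02}(c) is not Zariski dense.

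For the second implication you take a genuinely different route for the lower bound $\alower_f(x)\ge\d_f$. You keep the divisor $F$ itself, write $h_F=\qhat_{A,F}+\hat\ell_F+O(1)$, and argue that the linear and constant contributions grow at most like $O\bigl(n^{\rho/2}\d_f^{n/2}\bigr)$ (via the Cauchy--Schwarz bound and the polynomial-times-exponential growth from the proof of Theorem~\ref{theorem:thmA}), hence are swamped by the quadratic term $\d_f^n\qhat_{A,F}(x)$. The paper instead sidesteps the linear part entirely: it replaces $F$ by the symmetrized divisor $F+[-1]^*F$, for which the canonical height is purely quadratic up to $O(1)$, so $\qhat_{A,F}=\frac12 h_{A,F+[-1]^*F}+O(1)$ and Lemma~\ref{lemma:alowergehDfn} applies immediately with no estimate on the odd part needed. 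The paper's symmetrization is cleaner and does not rely on the (unproved in the paper) Remark~\ref{remark:afxfromNS} for the growth bound on $h_H\circ f^n$; your approach requires controlling that growth, and strictly speaking Remark~\ref{remark:afxfromNS} only gives $h_H\bigl(f^n(x)\bigr)=O\bigl((\d_f+\e)^n\bigr)$ rather than the sharper $O\bigl(n^\rho\d_f^n\bigr)$ you state, though either suffices once $\d_f>1$ (and the case $\d_f=1$ is trivial since $1\le\alower_f(x)\le\aupper_f(x)\le\d_f=1$). For the upper bound $\aupper_f(x)\le\d_f$, the paper cites \cite{kawsilv:arithdegledyndeg} with a reduction to the dominant case, while you cite Remark~\ref{remark:afxfromNS}; both are acceptable.
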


\begin{proof}
Applying~\eqref{eqn:aAFflhAF} with $\l=\d_f$, we have
\[
  \qhat_{A,F}\circ f = \d_f\qhat_{A,F}.
\]
Further, if $D$ is symmetric, then we have $\qhat_{A,D}=h_{A,D}+O(1)$,
so
\begin{equation}
  \label{eqn:hAF12hAFneg1F}
  \qhat_{A,F} = \frac12 \qhat_{A,F+[-1]^*F}
  = \frac12 h_{A,F+[-1]^*F} + O(1).
\end{equation}
We also note that the canonical height associated to the nef
divisor~$F$ is non-negative, because $F+\e H$ is ample for any $\e>0$
and any ample divisor~$H$, so
\[
  \qhat_{A,F} = \qhat_{A,F+\e H} - \qhat_{A,\e H} \ge - \e \qhat_{A,H}.
\]
Since $\qhat_{A,H}\ge0$ and~$\e$ is arbitrary, we see that~$\qhat_{A,F}\ge0$.
\par
We now suppose that~$\qhat_{A,F}(x)>0$ and compute
\begin{align*}
  \a_f(x)
  &=\alower_f(x) 
    \quad\text{from Theorem~\ref{theorem:thmA},} \\
  &\ge \liminf_{n\to\infty} \bigl|h_{A,F+[-1]^*F}\bigl(f^n(x)\bigr)\bigr|^{1/n}
    \quad\text{from Lemma~\ref{lemma:alowergehDfn},} \\
  &= \liminf_{n\to\infty} \qhat_{A,F}\bigl(f^n(x)\bigr)^{1/n} 
    \quad\text{from \eqref{eqn:hAF12hAFneg1F},}\\
  &= \liminf_{n\to\infty} \bigl(\d_f^n\qhat_{A,F}(x)\bigr)^{1/n} 
    \quad\text{from \eqref{eqn:aAFflhAF},}\\
  &= \d_f
     \quad\text{since $\qhat_{A,F}(x)>0$.}
\end{align*}
Since~\cite{kawsilv:arithdegledyndeg} says that we always
have~$\a_f(x)\le\d_f$,\footnote{The proof of $\a_f(x)\le\d_f$
  in~\cite{kawsilv:arithdegledyndeg} assumes that~$f$ is dominant. But
  in our situation, since~$f^n(A)$ is a sequence of abelian
  subvarieties of~$A$ of non-increasing dimension, it eventually
  stabilizes, say~$B=f^m(A)$ with $f:B\to B$ an isogeny. Then one
  easily checks that $\a_f(x)=\a_{f|_B}\bigl(f^m(x)\bigr)$ and
  $\d_f=\d_{f|_B}$, which reduces us to the case of a dominant map.}
this proves the implication
\[
  \qhat_{A,F}(x)>0
  \quad\Longrightarrow\quad
  \a_f(x)=\d_f.
\]
\par
Next we let~$K/\QQ$ be a number field such that~$A$,~$D$, and~$f$ are
defined over~$K$ and such that~$x\in A(K)$. Then~$\Orbit_f(x)\subset
A(K)$. Now suppose that $\qhat_{A,F}(x)=0$. Then~\eqref{eqn:hAF12hAFneg1F}
tells us that $\qhat_{A,F}\bigl(f^n(x)\bigr)=\d_f\qhat_{A,F}(x)=0$
for all $n\ge0$, so
\[
  \Orbit_f(x)
  \subset \bigl\{y\in A(K) : \qhat_{A,F}(y) = 0 \bigr\}.
\]
Theorem~\ref{theorem:hADx0ax02}(c) tells us that the set on the right
is not Zariski dense in~$A$, so the same is true
of~$\Orbit_f(x)$. This completes the proof of the implication
\[
  \qhat_{A,F}(x)=0
  \quad\Longrightarrow\quad
  \overline{\Orbit_f(x)} \ne A,
\]
which combined with the fact that~$\qhat_{A,F}(x)\ge0$ gives the other
desired implication
\[
  \overline{\Orbit_f(x)} = A
  \quad\Longrightarrow\quad
  \qhat_{A,F}(x)>0.
\]
This completes the proof of Corollary~\ref{corollary:Ofxdenseafxdf}.
\end{proof}

\begin{remark}
We give examples to show that neither of the implications in
Corollary~\ref{corollary:Ofxdenseafxdf} is true in the opposite
direction.
Let~$E$ be an elliptic curve, and take
\[
  A=E^2,\quad
   H = \pi_1^*(O)+\pi_2^*(O),\quad\text{and}\quad f(P,Q)=(2P,2Q).
\]
Then~$H$ is ample and symmetric (so $\qhat_{A,H}=\hhat_{A,H}$) and
satisfies $f^*H\sim4H$, so in particular~$\d_f=4$. Further,
\[
  \hhat_{A,H}(P,Q)=\hhat_{E,(O)}(P)+\hhat_{E,(O)}(Q).
\]
Then for any nontorsion point $P\in E$ we have
\[
  \hhat_{A,H}(P,O) = \hhat_{E,(O)}(P)+\hhat_{E,(O)}(O) = \hhat_{E,(O)}(P) > 0
\]
and
\[
  \overline{\Orbit_f(P,O)} = E\times\{O\},
\]
which shows that the implication
$\overline{\Orbit_f(x)}=A\;\Rightarrow\;\qhat_{A,F}(x)>0$ cannot be
reversed.
\par
Continuing with the assumption that~$P\notin E_\tors$, we compute
\begin{align*}
  h_{A,H}\bigl(f^n(P,O)\bigr) 
  &= \hhat_{A,H}\bigl(f^n(P,O)\bigr)+O(1) \\
  &= 4^n\hhat_{A,H}(P,O)+O(1) \\
  &= 4^n\hhat_{E,(O)}(P)+O(1),
\end{align*}
so the fact that $\hhat_{E,(O)}(P)>0$ and the definition
of arithmetic degree give~$\a_f(P,O)=4=\d_f$.
On the other hand, consider
the nonzero nef divisor~$F=\pi_2^*(O)$. It satisfies
\[
   f^*F\sim4F
   \quad\text{and}\quad
   \hhat_{A,F}(P,O)=\hhat_{E,(O)}(O)=0.
\]  
Thus $\a_f(P,O)=\d_f$ and $\hhat_{A,F}(P,O)=0$, which shows that the
implication $\qhat_{A,F}(x)>0\;\Rightarrow\;\a_f(x)=\d_f$ cannot be
reversed.
\end{remark}

\section{An example of nef heights on certain CM abelian varieties}
\label{section:abeliansurfaceexample}
In this section we illustrate Theorem~\ref{theorem:hADx0ax0} and
Corollary~\ref{corollary:Ofxdenseafxdf}(b) by working out the details
for a non-trivial example, specificaly for an abelian variety~$A$
whose endomorphism algebra $\End(A)_\QQ$ is isomorphic to a real
quadratic field~$\QQ(\s)$, where $\s^2=m$ is a positive non-square
integer. In this case the Rosati involution is the identity map, so
$\F:\NS(A)_\QQ\to\End(A)_\QQ$ is an isomorphism. We always
have~$\F_H=1_A$, and we choose a divisor~$F\in\NS(A)_\QQ$ such that
$\F_F=\s$. Then
\[
  \End(A)_\RR \cong \RR(\s) \xrightarrow[i]{\;\sim\;} \RR\times\RR,
  \qquad
  i(a+b\s) = \left(a+b\sqrt{m},a-b\sqrt{m}\right),
\]
and $D\in\NS(A)_\RR$ is nef if and only if both coordinates of
$i(\F_D)$ are non-negative; cf.\ \cite[page~210]{MR0282985}.
\par
The divisor
\[
  D = \sqrt{m}H+F \in \NS(A)_\RR
\]
satisfies 
\[
  i(\F_D) = i\left(\sqrt{m}\F_H+\F_F\right)
  = i\left(\sqrt{m}+\s\right)
  = \left(2\sqrt{m},0\right),
\]
so~$D$ is nef. Proposition~\ref{proposition:DnefiffFda2} says
that~$\F_D$ can be written in the form~$\a'\circ\a$ for
some~$\a\in\End(A)_\RR$.  In fact, we explicitly have
\[
  \F_D^2 = (\sqrt{m}+\s)^2 = 2m+2\sqrt{m}\s = 2\sqrt{m}\F_D,
\]
so 
\[
  \F_D = \a'\circ\a=\a^2 \quad\text{with $\a=(4m)^{-1/4}\F_D$.}
\]
Then $\hhat_{A,D}(x)=\hhat_{A,H}\bigl(\a(x)\bigr)$, cf.\ the
computation in the proof of Theorem~\ref{theorem:hADx0ax0},
so
\[
  \hhat_{A,D}(x)=0
  \quad\Longleftrightarrow\quad
  \a(x) = 0 \quad\text{in $A(\Qbar)_\RR$.}
\]
But
\[
  \a(x) = (4m)^{-1/4}\F_D(x) = (4m)^{-1/4}\left(\sqrt m x + \s(x)\right),
\]
so we see that
\begin{equation}
  \label{eqn:hADx0mxsx0}
  \hhat_{A,D}(x)=0
  \quad\Longleftrightarrow\quad
  \sqrt{m}x+\s(x) = 0 \quad\text{in $A(\Qbar)_\RR$.}
\end{equation}
Consider the linear transformation
\[
  T : A(\Qbar)_\RR \longrightarrow A(\Qbar)_\RR,\qquad
     x \longmapsto \left(\sqrt{m}+\s\right)(x) = \sqrt{m}x+\s(x).
\]
It follows from~\eqref{eqn:hADx0mxsx0} that $\hhat_{A,D}(x)=0$ if
and only if~$x\in\Ker(T)$. If we also assume that~$x\in A(\Qbar)_\QQ$,
then choosing a basis $\{v_i\}$ for $A(\Qbar)_\QQ$
and writing $x=\sum a_iv_i$ with $a_i\in\QQ$, we have
(all sums have finitely many nonzero terms)
\begin{align*}
  0 = T(x) 
  &= \sum_i a_iT(v_i) \\
  &= \sum_i a_i\sqrt{m}v_i + a_i\s(v_i) \\
  &= \sum_i a_i\sqrt{m}v_i + a_i\sum_j b_{ij}v_j
    \quad\text{for some $b_{ij}\in\QQ$,} \\
  &= \sum_i \left(a_i\sqrt{m}+\sum_j a_jb_{ji}\right)v_i.
\end{align*}
Since $\{v_j\}$ is a $\QQ$-basis for~$A(\Qbar)_\QQ$, it is \emph{a
fortiori} an~$\RR$-basis for~$A(\Qbar)_\RR$, so we see that
$a_i\sqrt{m}+\sum_j a_jb_{ji}=0$ for all~$i$. But $a_i,b_{ij}\in\QQ$,
while~$\sqrt{m}\notin\QQ$, so we conclude that~$a_i=0$ for
all~$i$. Hence~$x=0$ in~$A(\Qbar)_\QQ$, which is equivalent to~$x$
being a torsion point. We note that this is the appropriate conclusion
from Theorem~\ref{theorem:hADx0ax0}, since~$A$ is simple, so the
abelian subvariety~$B\subsetneq A$ must be~$B=0$, and
thus~$B^\div=A_\tors$.  Finally, we remark that an easy calculation
shows that~$D$ is an eigendivisor for every endomorphism
$f\in\End(A)$, and more precisely, that $f^*D\equiv\d_fD$. Hence
if~$x\notin A_\tors$, then we have proven that~$\hhat_{A,D}(x)>0$,
from which we conclude (as in the proof of
Corollary~\ref{corollary:Ofxdenseafxdf}(b)) that
\[
  \a_f(x)\ge\liminf \hhat_{A,D}\bigl(f^n(x)\bigr)^{1/n}
  = \liminf \bigl(\d_f^n\hhat_{A,D}(x)\bigr)^{1/n}
  = \d_f.
\]

\bibliographystyle{abbrv}

\end{document}